\newtheorem{theorem}{Theorem}[section]
\newtheorem{lemma}[theorem]{Lemma}
\newtheorem{corollary}[theorem]{Corollary}
\newtheorem{proposition}[theorem]{Proposition}
\newtheorem{claim}[theorem]{Claim}
\theoremstyle{definition}
\newtheorem{definition}[theorem]{Definition}
\newtheorem{remark}[theorem]{Remark}
\newtheorem{assumption}[theorem]{Assumption}
\newtheorem{example}[theorem]{Example}
\def\P{{\mathbb P}}
\def\Q{{\mathbb Q}}
\def\R{{\mathbb R}}
\def\Z{{\mathbb Z}}
\def\cE{{\mathcal E}}
\def\cM{{\mathcal M}}
\def\cN{{\mathcal{N}}}
\def\cO{{\mathcal{O}}}
\def\cS{{\mathcal S}}
\def\cU{{\mathcal U}}
\def\Q{{\mathbb{Q}}}
\def\rat{\dashrightarrow}
\def\cOperatorname#1{\mathop{\rm #1}\nolimits}
\def\Chow{\cOperatorname{Chow}}
\def\codim{\cOperatorname{codim}}
\def\deg{\cOperatorname{deg}}
\def\rat{\cOperatorname{RatCurves}}
\def\NE{{\cOperatorname{NE}}}
\def\ME{{\cOperatorname{ME}}}
\newcommand{\cME}[1]{\cOverline{\ME}}
\renewcommand{\labelenumi}{(\roman{enumi})}
\begin{document}

\title{Fano varieties with large pseudoindex and non-free rational curves}

\author{Kiwamu Watanabe}
\date{\today}
\address{Department of Mathematics, Faculty of Science and Engineering, Chuo University.
1-13-27 Kasuga, Bunkyo-ku, Tokyo 112-8551, Japan}
\email{watanabe@math.chuo-u.ac.jp}
\thanks{The author is partially supported by JSPS KAKENHI Grant Number 21K03170.}

\subjclass[2020]{14J40, 14J45, 14E30.}
\keywords{extremal contractions, non-free rational curves, Fano varieties}

\begin{abstract} For $n\geq 4$, let $X$ be a complex smooth Fano $n$-fold whose minimal anticanonical degree of non-free rational curves on $X$ is at least $n-2$. We classify extremal contractions of such varieties. As an application, we obtain a classification of Fano fourfolds with pseudoindex and Picard number greater than one. Combining this result with previous results, we complete the classification of smooth Fano $n$-folds with pseudoindex at least $n-2$ and Picard number greater than one. This can be seen as a generalization of various previous results. We also discuss the relations between pseudoindex and other invariants of Fano varieties.
\end{abstract}

\maketitle

\section{Introduction} 
\subsection{Classification of Fano varieties from the perspective of index and pseudoindex} A complex smooth projective variety $X$ is called {\it Fano} if its anticanonical divisor $-K_X$ is ample. From both classical and modern perspectives on projective geometry and the Minimal Model Program, Fano varieties carry substantial significance. It is known that there are finitely many families of smooth Fano varieties for each dimension \cite{KMM92}; in particular, Fano threefolds were classified by G. Fano, V. A. Iskovskih, V. V. Shokurov, T. Fujita, S. Mori, S. Mukai and K. Takeuchi (see \cite{Isk} and references therein). The classification becomes more challenging in dimensions four and higher; many authors have investigated Fano fourfolds (see some examples \cite{Wis89, Wis90, Prok94, Lan98, Tsu10-2, Tsu12, Casa13, Casa13-2, Fuj14, Casa17, Casa20, CasaRom22, Casa22,  HLM22}), but they are still far from a complete classification.

For a Fano $n$-fold $X$ with $n \geq 4$, we define the {\it Fano index} $i_X$ and the {\it pseudoindex} $\iota_X$ of $X$ by
$$
i_X:=\max \left\{m\in \Z\,\, \middle|\,\, -K_X=mL~\mbox{with}~L\in {\rm Pic}(X)\right\}
$$ 
and 
$$
\iota_X:=\min \{(-K_X)\cdot C\mid C\subset X ~\mbox{is a rational curve}\}
$$ respectively. Remark that $i_X$ divides $\iota_X$. By Kobayashi-Ochiai Theorem \cite{KO}, $i_X\leq n+1$; moreover if $i_X= n+1$, then $X$ is isomorphic to a projective space $\P^n$, and if $i_X=n$, then $X$ is isomorphic to a smooth quadric $Q^n$. If $i_X= n-1$, then $X$ is called a {\it del Pezzo variety}, which is completely classified by T. Fujita \cite{Fuj1, Fuj2}; if $i_X= n-2$, then $X$ is called a {\it Mukai variety}, which is completely classified by S. Mukai \cite{Muk89}.  On the other hand, as a generalization of Kobayashi-Ochiai Theorem \cite{KO}, ${\iota}_X\leq \dim X+1$; moreover if ${\iota}_X= n+1$, then $X$ is isomorphic $\P^n$ \cite{CMSB}, and if ${\iota}_X= n$, then $X$ is isomorphic $Q^n$ \cite{Mi, DH17}. Different from the case of the Fano index, a complete classification of Fano varieties with pseudoindex $n-1$ and $n-2$ is not known. On the other hand, according to the results of J. A. Wi\'sniewski \cite{Wis90b}, G. Occhetta \cite[Corollary~4.3]{Occ06}, and the author \cite{Wat24}, the classification of smooth Fano $n$-folds with Picard number greater than one and pseudoindex greater than $\frac{n+1}{2}$ is known. From these results, the classification of smooth Fano $n$-folds with pseudoindex at least $n-2$ and Picard number greater than one is reduced to the four-dimensional case (see Theorem~\ref{them:3''}).

\subsection{Main results} 
To classify Fano fourfolds of pseudoindex two, we study extremal contractions, which play a crucial role in understanding the geometry of higher-dimensional Fano varieties. To explain our results, we introduce a new invariant of Fano varieties.
Let $X$ be a smooth Fano variety. A non-trivial morphism $f: \P^1 \to X$ is called a {\it free rational curve} if $f^{\ast} T_X$ is nef, where $T_X$ is the tangent bundle of $X$. It is called a {\it non-free rational curve} otherwise. We denote by ${\rm Hom}^{\rm nf}(\P^1, X)$ the closed subscheme of the Hom scheme ${\rm Hom}(\P^1, X)$ parametrizing non-free rational curves.
\begin{definition}\label{def:tau} For a smooth Fano variety $X$, 
\begin{equation} \nonumber
  \tau_X:=
  \begin{cases}
   \min\bigl\{\deg f^{\ast}(-K_X) \mid [f]\in {\rm Hom}^{\rm nf}(\P^1, X)\bigr\} & \text{if ${\rm Hom}^{\rm nf}(\P^1, X)\neq \emptyset$,} \\
   + \infty              & \text{otherwise.}
  \end{cases}
\end{equation}
We call $\tau_X$ the {\it free index} of $X$.
\end{definition}

Our main results are the following:
 
\begin{theorem}\label{them:1}\rm Let $X$ be a smooth Fano variety of dimension $n \geq 4$ and $\tau_X\geq n-2$. Let $\varphi: X \to Y$ be a contraction of an extremal ray. If $\varphi$ is of fiber type, then $\varphi$ is one of the following:
 \begin{enumerate}
 \item a smooth morphism;
 \item a $\P^{n-3}$-fibration which is not equidimensional;
 \item an equidimensional $Q^{n-2}$-fibration.
 \end{enumerate} 
 Moreover, any fiber of $\varphi$ has codimension at least two {if $X$ is not a product of $\P^1$ and a variety}.
\end{theorem}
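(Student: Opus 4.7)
The plan is to analyze the extremal contraction $\varphi$ via the length $\ell$ of its extremal ray $R$ and the minimal unsplit family $\cM$ of rational curves contracted by $\varphi$. The main dichotomy is based on whether a rational curve in $R$ of minimum $-K_X$-degree is free on $X$ or not.

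First, choose $C$ in $R$ with $-K_X \cdot C = \ell$. If $C$ is non-free on $X$, then by the definition of $\tau_X$ we have $\ell \geq \tau_X \geq n-2$. By the Ionescu--Wi\'sniewski inequality, any non-trivial fiber of $\varphi$ then has dimension at least $\ell - 1 \geq n-3$, so $\dim Y \leq 3$. Applying classical classification results for Fano-type fibers (Fujita's del Pezzo classification, Mukai's classification of Mukai varieties, and results of Wi\'sniewski and Occhetta on contractions of large length) together with the constraint $\tau_X \geq n-2$, I expect to pin down the general fiber of $\varphi$ as either $\P^{n-3}$ (case (ii)) or $Q^{n-2}$ (case (iii)); distinguishing equidimensional from non-equidimensional structures and ruling out other fiber types will require analysing the special fibers in detail and reusing the bound on $\tau_X$ to control non-free rational curves on $X$ lying in those fibers.

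Second, if every minimal rational curve in $R$ is free on $X$, I will argue that $\varphi$ is a smooth morphism, giving case (i). The idea is that a covering family of free rational curves defines a foliation with generically smooth leaves, and since the fibers of $\varphi$ coincide with the $\cM$-equivalence classes, the smoothness of the leaves propagates to $\varphi$. This step likely invokes deformation-theoretic tools of Koll\'ar--Miyaoka--Mori or Araujo--Druel--Kebekus type on foliations arising from families of rational curves.

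For the final codimension statement, the sensitive case is $\dim Y \leq 1$. If $\dim Y = 1$, then $Y \cong \P^1$ and the fibers have codimension $1$; the claim is that this forces $X \cong \P^1 \times Z$ under $\tau_X \geq n-2$, because a non-trivial bundle or fibration structure over $\P^1$ would produce a non-free rational curve (along a minimal section or via deformation across fibers) of $-K_X$-degree less than $n-2$, violating the bound on $\tau_X$. The main obstacle I anticipate is the ``free implies smooth'' reduction in the second step, where controlling the behavior of $\cM$ around each fiber and showing $\varphi$ is smooth requires delicate deformation theory; a secondary difficulty is identifying the precise structure of special fibers in cases (ii) and (iii) using $\tau_X \geq n-2$.
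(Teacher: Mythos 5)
Your skeleton (the free/non-free dichotomy on minimal extremal rational curves, smoothness in the free case via the geometric quotient, and reducing the codimension statement to the case $\dim Y=1$ through a product decomposition as in Proposition~\ref{prop:charact:P1:product}) matches the paper's strategy, but there is a genuine gap at the technical heart of the non-free case: you give no argument excluding $\P^{n-2}$ as the general fiber. After the Ionescu--Wi\'sniewski bound one only knows $n-1\geq \deg_{(-K_X)}\cM\geq \ell(R)\geq n-2$, so a priori $\varphi$ could be an equidimensional fibration over a surface with general fiber $\P^{n-2}$, which is not on the list in the statement. The paper eliminates this in Claim~\ref{cl:length} and the paragraph after it: one proves $\deg_{(-K_X)}\cM=\ell(R)$ and that such a fibration would be a $\P^{n-2}$-bundle, following \cite{HNov13} (rigidity of projective space, the result of \cite{AR14} on the non-smooth locus, normalizing a component $D_1$ of a reducible fiber to get $D_1\cong\P^{n-2}$ with $\nu^{\ast}(-K_X)\cong\cO_{\P^{n-2}}(n-2)$, and finally the contradiction $(n-2)\,\cO_{\P^{n-2}}(1)\cdot\tilde{C}=n-1$). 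This multi-step argument is exactly what your phrase ``analysing the special fibers in detail'' would have to contain, and it is not supplied by Fujita's or Mukai's classifications, which concern the Fano \emph{index} of the fiber and are not the relevant tools; what is actually used is the characterization of $\P^m$ and $Q^m$ by covering families of large degree (\cite{CMSB}, \cite{DH17}) together with \cite{HNov13}.

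Two further omissions. First, everything hinges on the family $\cM$ of contracted minimal curves being \emph{unsplit}; the paper derives this from Lemma~\ref{lem:degeneration:curves} and $\tau_X\geq n-2$ (a degeneration would consist of non-free components of total degree at least $2(n-2)>n-1$). Without unsplitness neither Theorem~\ref{them:sm:quot} (your ``free implies smooth'' step, where a foliation-style argument would face the same properness issue) nor the geometric-quotient construction of Theorem~\ref{them:unsplit:mor} is available. Second, for $\dim Y\geq 2$ the codimension claim requires bounding the dimension of the \emph{special} fibers by $n-2$; the paper obtains this from the bound $\dim B\leq n-2$ on the jumping locus via \cite{BCD07}, a point your proposal does not address.
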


Following the idea of the proof of Yasutake's result \cite{Yas14}, we also prove the following:

\begin{theorem}\label{them:2} \rm  Let $X$ be a smooth Fano variety of dimension $n \geq 4$. Assume that $\tau_X\geq n-2$. If $X$ admits a birational contraction of an extremal ray, then $X$ is isomorphic to one of the following:
\begin{enumerate}
\item $\P(\cO_{\P^{n-1}}\oplus \cO_{\P^{n-1}}(1))$; 
\item $\P(\cO_{Q^{n-1}}\oplus \cO_{Q^{n-1}}(1))$;
\item $\P(\cO_{\P^{n-1}}\oplus \cO_{\P^{n-1}}(2))$;
\item the blow-up of $\P^n$ along a line;
\item the blow-up of $Q^4$ along a line;
\item the blow-up of $Q^4$ along a conic which is not on a plane contained in $Q^4$;
\item $\P^1\times \P(\cO_{\P^{n-2}}\oplus\cO_{\P^{n-2}}(1))$.
\end{enumerate}
\end{theorem}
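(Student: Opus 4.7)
The plan is to combine the Ionescu-Wi\'sniewski inequality with the classical characterisations of projective spaces and quadrics by pseudoindex and with Ando-type structure theorems for extremal contractions of large length. The key input from the hypothesis $\tau_X\geq n-2$ is that a minimal representative of the contracted extremal ray $R$, being confined to the exceptional locus, cannot deform to cover $X$, hence is non-free on $X$; thus $\ell(R)\geq\tau_X\geq n-2$.

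First I would run Ionescu-Wi\'sniewski. With $E$ the exceptional locus and $F$ a general non-trivial fiber, $\dim E+\dim F\geq n+\ell(R)-1\geq 2n-3$. A small contraction would force $\dim E\leq n-2$ and $\dim F\geq n-1>\dim E$, which is impossible; hence $\varphi$ is divisorial, $\dim E=n-1$, and $\dim F\in\{n-2,n-1\}$. When $\dim F=n-1$ (Case A, $\varphi(E)$ is a point), $F=E$ and $-E|_E$ is $\varphi$-ample, hence ample on $E$; adjunction gives $-K_E=(-K_X)|_E+(-E)|_E$ ample, and for every rational curve $C\subset E$ (which lies in $R$ and is non-free on $X$)
\[
(-K_E)\cdot C=(-K_X)\cdot C+(-E)\cdot C\geq (n-2)+1=n-1,
\]
so $\iota_E\geq\dim E$ and $E\cong\P^{n-1}$ or $E\cong Q^{n-1}$. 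Writing $\cO_E(-E)=\cO_E(a)$ and computing $(-K_X)\cdot\ell$ on a line forces $a\in\{1,2\}$, with $a=2$ allowed only when $E\cong\P^{n-1}$. Standard arguments on extending such polarised pairs to a global $\P^1$-bundle structure, together with $X\setminus E\cong Y\setminus\{\mathrm{pt}\}$, identify $X\cong\P(\cO_E\oplus\cO_E(-E))$ and produce items (1), (2), (3).

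When $\dim F=n-2$ (Case B, $\varphi(E)$ is a curve), irreducibility of $E$ forces $\varphi|_E$ to be equidimensional. A general fiber $F$ is smooth with $\iota_F\geq n-2=\dim F$, hence $F\cong\P^{n-2}$ or $Q^{n-2}$. Ando's theorem for divisorial contractions of length $\geq\dim F$ onto a smooth curve rules out the quadric case, shows that $Y$ is smooth, and realises $\varphi$ as the blow-up of $Y$ along the smooth curve $Z:=\varphi(E)$. Writing $-K_X=\pi^*(-K_Y)-(n-2)E$ and intersecting with the strict transform of a minimal rational curve $C\subset Y$ meeting $Z$ transversally in $\mu$ points gives
\[
(-K_X)\cdot\widetilde C=(-K_Y)\cdot C-(n-2)\mu.
\]
Ampleness of $-K_X$ together with the condition $\tau_X\geq n-2$ restrict the possibilities: when $\rho(Y)=1$, they force $Y\in\{\P^n,Q^n\}$, with $Y=Q^n$ for $n\geq 5$ excluded via a separate application of Theorem~\ref{them:1} to the other extremal ray of $X$; when $\rho(Y)\geq 2$, an analysis of the remaining extremal rays of $X$ forces $Y\cong\P^1\times\P^{n-1}$ with $Z$ a $\P^1$-fiber. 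A direct examination of the secant and chord geometry of $Z$ in $Y$ then produces exactly items (4), (5), (6), (7).

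The main obstacle is this final step. Distinguishing which strict transforms of rational curves on $Y$ are free on $X$ and which are not, and hence which contribute to $\tau_X$, requires a careful deformation-theoretic argument along the lines of Yasutake's approach. The delicate sub-cases are excluding $Y=Q^4$ with $Z$ a conic lying on a plane in $Q^4$ (which violates ampleness of $-K_X$) and identifying $\P^1\times\P^{n-1}$ as the only Fano base of Picard number $\geq 2$ compatible with the hypothesis.
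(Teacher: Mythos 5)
Your overall skeleton (Ionescu--Wi\'sniewski forcing $\dim E=n-1$ and $\dim F\in\{n-2,n-1\}$, the observation that minimal curves of $R$ are non-free so $\ell(R)\geq\tau_X\geq n-2$, and the two-case split by $\dim\varphi(E)$) matches the paper, and your Case B is close to the paper's route via Andreatta--Occhetta and Tsukioka's classification. But there is a genuine gap at the heart of Case A, and it is precisely the gap the paper is written to fill. You classify $E$ intrinsically ($E\cong\P^{n-1}$ or $Q^{n-1}$ with $\cO_E(-E)=\cO_E(1)$ or $\cO_E(2)$) and then invoke ``standard arguments on extending such polarised pairs to a global $\P^1$-bundle structure'' to conclude $X\cong\P(\cO_E\oplus\cO_E(-E))$. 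No such standard argument exists at this level of generality: knowing the pair $(E,N_{E/X})$ and that $X\setminus E\cong Y\setminus\{\mathrm{pt}\}$ does not by itself produce a second contraction of $X$ that is a $\P^1$-bundle with $E$ as a section. What is actually needed is to (a) produce, via the Cone theorem, a second extremal ray $R'$ with $E\cdot R'>0$ (the paper's Lemma~\ref{lem:E-negative:ray}), and (b) prove that the contraction of $R'$ is of \emph{fiber type} (the paper's Proposition~\ref{prop:E-negative:ray}, proved by a delicate intersection analysis of two exceptional divisors that reduces to $n=4$ and is then excluded using Tsukioka's description of the blow-up of $\P^4$ along a $(1,2,2)$ complete intersection curve). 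Only then does the finiteness of $\psi^{-1}(y')\cap E$ force $\psi$ to be a $\P^1$-bundle, after which Fujita's result gives $X\cong\P(\cO_{Y'}\oplus\cO_{Y'}(s))$ with $E$ a section. The paper explicitly notes (Remark~\ref{rem:Yasutake}) that Yasutake asserted fiber-typeness of $\psi$ without proof; your proposal reproduces exactly that omission.

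A secondary issue in your Case A: to apply the pseudoindex characterisations of $\P^{n-1}$ and $Q^{n-1}$ you need $E$ to be smooth (or at least normal), which is not automatic for the exceptional divisor of a divisorial contraction to a point. The paper sidesteps this because $E$ is exhibited as a section of the $\P^1$-bundle $\psi:X\to Y'$, hence isomorphic to the smooth base $Y'$ with $\rho_{Y'}=1$. Your closing paragraph locates the ``main obstacle'' in the secant geometry of Case B, but the genuinely missing idea is the structure of the second extremal ray; without Proposition~\ref{prop:E-negative:ray} (or an equivalent), Case A does not close.
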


For a smooth Fano fourfold $X$, $\tau_X\geq 2$ if and only if $\iota_X\geq 2$ (see Lemma~\ref{lem:tau:iota:4}). As an application of Theorem~\ref{them:1} and \ref{them:2}, we obtain a classification of Fano fourfolds with pseudoindex and Picard number greater than one (Theorem~\ref{them:3'}). Consequently, we complete a classification of Fano $n$-folds $X$ with $\iota_X\geq n-2$ and $\rho_X\geq 2$:
\begin{theorem}[{$=$Theorem~\ref{them:3''} and \ref{them:3'}}]\label{them:3} \rm  Let $X$ be a smooth Fano variety of dimension $n \geq 4$. If $\iota_X\geq n-2$ and $\rho_X\geq 2$, then $X$ is isomorphic to one of the following:
\begin{enumerate}
\item $\P^3\times \P^3$;
\item $\P(\cO_{\P^{3}}^{\oplus 2}\oplus\cO_{\P^{3}}(1))$;
\item $\P^3\times \P^2$;
\item $Q^3\times \P^2$;
\item $\P(T_{\P^3})$;
\item $\P(\cO_{\P^{3}}\oplus \cO_{\P^{3}}(1))$. 
\item $\P(\cO_{\P^{2}}^{\oplus 2}\oplus\cO_{\P^{2}}(1))$;
\item a divisor on $\P^2\times \P^3$ of bidegree $(1, 1)$;
\item $\P^2\times \P^2$;
\item {$\P^1\times Q^3$; or}
\item a Fano fourfold of $i_X=2$.
\end{enumerate}
\end{theorem}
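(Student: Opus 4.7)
The plan is to divide the proof by dimension. The range $n \geq 5$ is precisely the content of Theorem~\ref{them:3''}, which collects earlier work of Wi\'sniewski, Occhetta, and the author and requires no new argument; the substantive task is therefore to prove Theorem~\ref{them:3'} in dimension four.

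For $n = 4$, first invoke Lemma~\ref{lem:tau:iota:4} to replace $\iota_X \geq 2$ by $\tau_X \geq 2 = n - 2$, bringing Theorems~\ref{them:1} and~\ref{them:2} to bear. Since $\rho_X \geq 2$, fix an extremal ray and its contraction $\varphi \colon X \to Y$. If $\varphi$ is birational, Theorem~\ref{them:2} leaves seven candidates; the blow-up cases (items (4)--(6) of Theorem~\ref{them:2}) each contain a rational curve of anticanonical degree $1$ in the exceptional divisor and hence violate $\iota_X \geq 2$, while the remaining projective-bundle candidates either coincide with $\P(\cO_{\P^{3}} \oplus \cO_{\P^{3}}(1))$ (item~(6) of the target list) or have $i_X = 2$ and fall into item~(11).

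If every extremal contraction is of fiber type, Theorem~\ref{them:1} restricts each such contraction to be a smooth morphism, a non-equidimensional $\P^{1}$-fibration, or an equidimensional $Q^{2}$-fibration. The plan is to combine the information coming from two distinct extremal rays: every fiber of a smooth contraction is a smooth Fano variety whose pseudoindex inherits the bound $\geq 2$, and the base itself is a smooth Fano variety of smaller dimension with controlled Picard rank. Running through the compatible pairs of contraction types, and applying standard splitting criteria for vector bundles on projective spaces and low-dimensional quadrics, one recovers the products $\P^{2} \times \P^{2}$ and $\P^{1} \times Q^{3}$, the projective bundles $\P(T_{\P^{3}})$ and $\P(\cO_{\P^{2}}^{\oplus 2} \oplus \cO_{\P^{2}}(1))$, and the bidegree $(1,1)$ divisor in $\P^{2} \times \P^{3}$; every remaining example is absorbed into item~(11).

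The principal obstacle will be the fiber-type analysis. Enumerating the admissible pairs of extremal contraction types is already delicate, but the real work lies in attaching exactly one variety to each pair and, in the non-equidimensional $\P^{1}$-fibration case, controlling the jumping fibers well enough to conclude that only the listed examples arise. Distinguishing $\P(T_{\P^{3}})$ from other smooth $\P^{2}$-bundles over $\P^{3}$, and classifying the equidimensional $Q^{2}$-fibrations, will require careful use of deformation theory of rational curves together with the bound $\tau_X \geq n - 2$.
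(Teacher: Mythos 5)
Your overall architecture matches the paper's: quote Theorem~\ref{them:3''} for $n\geq 5$, and for $n=4$ pass through Lemma~\ref{lem:tau:iota:4} and split according to whether some elementary contraction is birational (Theorem~\ref{them:2}) or all are of fiber type (Theorem~\ref{them:1} plus a case analysis of pairs of rays). However, your treatment of the birational branch contains a concrete error. You discard items (4)--(6) of Theorem~\ref{them:2} on the grounds that each ``contains a rational curve of anticanonical degree $1$ in the exceptional divisor,'' but this is false. For the blow-up of $\P^4$ along a line, a curve $\Gamma\equiv ae+bf$ in the exceptional divisor $E\cong\P^1\times\P^2$ has $-K_X\cdot\Gamma=(n-2)a+3b=2a+3b\geq 2$, so $\iota_X=\ell_X=2$ (Example~\ref{eg:P:blup:l}); this variety is precisely $\P(\cO_{\P^{2}}^{\oplus 2}\oplus\cO_{\P^{2}}(1))$, item (7) of the statement, and your argument would delete it from the classification. (It cannot be recovered from your fiber-type branch either, since it admits a birational contraction and so does not satisfy the standing hypothesis there.) Likewise the blow-ups of $Q^4$ along a line or a conic have $i_X=2$, hence every curve has even anticanonical degree and $\iota_X=2$; they are absorbed into item (11), not excluded. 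In short, none of the seven outputs of Theorem~\ref{them:2} is eliminated by $\iota_X\geq 2$ at $n=4$; the correct step is to sort them using the invariants computed in Section~4 (Table~1).

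Two further points. First, $\P(T_{\P^3})$ is five-dimensional and belongs only to the $n\geq 5$ list; it cannot appear as an output of the fourfold fiber-type analysis as you assert. Second, your fiber-type sketch aims to ``attach exactly one variety to each pair'' of contraction types, whereas the paper resolves the genuinely non-smooth cases (two jumping $\P^1$-fibrations, a jumping $\P^1$-fibration together with an equidimensional $Q^2$-fibration, or two equidimensional $Q^2$-fibrations) not by identifying the varieties but by proving $\rho_X=i_X=2$ (Propositions~\ref{prop:4fold:P1fib} and \ref{prop:4fold:Q2fib}, via Kachi's structure theory of the jumping fibers and a family of minimal sections), so that these all land in item (11); only the smooth case is pushed to an explicit identification, via the reduction to $\P^d$-bundles with $d\leq 2$, Druel's list of Fano threefold bases, and the classification of rank-two Fano bundles on $\P^3$ and $Q^3$. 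This is where the bulk of the work lies, and your proposal leaves it essentially open.
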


Remark that Fano fourfolds of index two and Picard number greater than one are completely classified in \cite{Wis90}.

\subsection{Positivity of the exterior power of tangent bundles} In a pioneering study, J. P. Demailly, T. Peternell, and M. Schneider \cite{DPS94} established that a smooth projective variety with a nef tangent bundle becomes a locally trivial Fano fibration over an Abelian variety up to the differences in the finite \'{e}tale covers. This result is a {\it decomposition theorem} for varieties with nef tangent bundle. Also, \cite{DPS94} and L. E. Sol\'a Conde, Wi\'sniewski \cite[Theorem~4.4]{SW04} show that all Mori contractions are smooth. This result is a {\it contraction theorem} for varieties with nef tangent bundle. After that, similar problems were considered for varieties with nef (strictly nef or ample) second exterior power of the tangent bundles \cite{CP92, CS95, Yas12, Yas14, Sch18, LOY19}. Finally, the problem was solved by the author \cite{Wat21}, including all previous results. However, a similar problem can be conceived for varieties $X$ where the $r$-th exterior power of the tangent bundle is nef for $r<\dim X$, a decomposition theorem was obtained by the author \cite{Wat} via C. Gachet's result \cite{Gac22}, which is a generalization of a decomposition theorem for varieties with nef tangent bundle \cite{DPS94} and varieties whose second exterior power of the tangent bundle is nef \cite[Corollary~1.6]{Wat21}. According to this result, the study of smooth projective varieties $X$ with nef $\wedge^{r} T_X$ $(1\leq r<n)$ is reduced to the case of Fano varieties. 

As an application of Theorem~\ref{them:1} and \ref{them:2}, we generalize all main results for varieties with nef $\wedge^2T_X$ in \cite{Wat21} to the case where $\wedge^3T_X$ is nef. More precisely, for a smooth Fano variety of dimension $n \geq 4$, the condition $\tau_X\geq n-2$ holds if $\wedge^r T_X$ is nef for $1\leq r\leq 3$ (see Lemma~\ref{lem:nonfree}). Thus Theorem~\ref{them:1} and \ref{them:2} can be regarded as a generalization of \cite[Theorem~5.2]{DPS94}, \cite[Theorem~4.4]{SW04} and \cite[Theorem~1.7]{Wat21} for Fano varieties. On the other hand, Gacet classified smooth projective varieties with strictly nef $\wedge^r T_X$ for $r\leq 4<\dim X$ \cite[Theorem~1.3, 1.4]{Gac22}. By the above decomposition theorem for varieties with nef exterior power of tangent bundle, any smooth projective variety with strictly nef $\wedge^r T_X$ for $r\leq 4<\dim X$ is Fano; thus Theorem~\ref{them:3} can be seen as a generalization of \cite[Theorem~1.3, 1.4]{Gac22} (see Corollary~\ref{cor:str:nef}).

\subsection{Outline of the paper} This paper is organized as follows. The second section defines standard notation and terminology used throughout the paper. It discusses the structure of extremal contractions and summarizes known results on Fano varieties with large pseudoindex. Additionally, it introduces the nef order $\nu_X$ and discusses its relation to pseudoindex and free index. This section also explores families of rational curves and their properties, including dominating, unsplit, and locally unsplit families.  The third section provides the proof of Theorem~\ref{them:1}, a classification of extremal contractions of Fano varieties with large free index. The fourth section demonstrates how to compute various invariants, including $\tau_X, \iota_X,  i_X, \nu_X$ for specific examples of Fano varieties that fit the classifications described in Theorem~\ref{them:2} and \ref{them:3}. The fifth section discusses Fano varieties with birational extremal contractions, which yields Theorem~\ref{them:2}. The sixth section summarizes the classification of Fano $n$-folds with $\iota_X \geq n-2$ and Picard number $\rho_X \geq 2$. It completes the classification of Fano fourfolds with pseudoindex and Picard number greater than one. This paper significantly advances the classification of higher-dimensional Fano varieties by focusing on their extremal contractions and the role of non-free rational curves. It establishes a clear framework for understanding the structure and invariants of these varieties, contributing to the broader effort of classifying all Fano varieties.

\section{Preliminaries}
\subsection*{Notation and Conventions}\label{subsec:NC} We will make use of the standard notation as in \cite{Har}, \cite{Kb}, \cite{KM}, \cite{L1} and \cite{L2}. In this paper, we work over the complex number field.
\begin{itemize}
\item For a projective variety $X$, we define $N_1(X)$ (resp. $N^1(X)$) as the vector space of one-cycles (resp. one-cocycle) on $X$ with real coefficients, considering numerical equivalence. The dimension of $N_1(X)$ is referred to as {\it the Picard number of $X$} denoted by $\rho_X$.
\item For a projective variety $X$, ${\rm Chow}(X)$ stands for the Chow scheme parametrizing cycles on $X$ (see \cite[Section~I.3]{Kb} for details).
\item By a {\it curve}, we mean a projective variety of dimension one. 
\item An {\it $F$-bundle} over a smooth projective variety $X$ is a smooth morphism $f: Y \to X$ between smooth projective varieties, with each fiber isomorphic to a variety $F$. We denote by $T_f$ the relative tangent bundle of $f: Y \to X$.
\item An {\it $F$-fibration} over a projective variety $X$ is a surjective morphism $f: Y \to X$ between projective varieties, with general fiber isomorphic to a variety $F$.
\item Given a vector bundle $E$ on a projective variety $X$, we use $\P(E)$ to represent the projectivization ${\rm Proj}({\rm Sym},E)$, and $\cO_{\P(E)}(1)$ to denote the tautological line bundle on $\P(E)$.
\item $\P^n$ denotes projective $n$-space, while $Q^n$ represents a smooth quadric hypersurface in $\P^{n+1}$.
\end{itemize}

\subsection{Extremal contractions and Fano varieties with large pseudoindex} Let $X$ be a smooth projective variety, and let $\varphi: X\to Y$ be a contraction of a $K_X$-negative extremal ray $R$, referred to as an {\it elementary contraction}. Every general fiber of $\varphi$ is a smooth Fano variety. The {\it length} of the ray $R$ is defined as 
$$ 
\ell(R):=\min \{-K_X\cdot C\mid C\subset X ~\mbox{is~a~rational~curve~such~that~} [C]\in R\}.
$$
A rational curve $C$ on $X$ is termed a {\it minimal extremal rational curve} of $R$ if $[C]\in R$ and $-K_X\cdot C=\ell(R)$. Tsukioka \cite{Tsu12} introduced the minimal length of extremal rays for a smooth Fano variety as follows:
\begin{definition}\label{def:length}\rm For a smooth Fano variety $X$, the {\it minimal length} of extremal rays for $X$ is defined as
$$
\ell_X
:=\min \{\ell(R)\mid  R~\mbox{is an extremal ray of}~\overline{ NE}(X)\}.
$$
\end{definition} 

\begin{proposition}\label{prop:div:iota}\rm Let $X$ be a smooth Fano variety of dimension $n \geq 3$ and $\rho_X=2$. Assume that $X$ admits a birational elementary contraction sending a divisor to a point. If another elementary contraction is a $\P^1$-bundle, then $\iota_X=\ell_X=2$.  
\end{proposition}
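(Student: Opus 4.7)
The plan is to compute the lengths $\ell_i = \ell(R_i)$ of the two extremal rays separately. Write $\varphi_1 : X \to Y_1$ for the birational contraction whose exceptional divisor $E$ is contracted to a point and $\varphi_2 : X \to Y_2$ for the $\P^1$-bundle, with corresponding extremal rays $R_1, R_2$. Since every $\varphi_2$-fiber $F$ is a smooth rational curve with $-K_X \cdot F = 2$, I obtain $\ell_2 = 2$ immediately, whence $\ell_X \le 2$ and $\iota_X \le \ell_X \le 2$. The real content reduces to proving $\ell_1 \ge 2$: once this is known, $\iota_X = 2$ follows since $\overline{NE}(X) = \R_{\ge 0}[C_1] + \R_{\ge 0}[F]$ (with $C_1$ a minimal rational curve in $R_1$), so every irreducible rational curve has class $a[C_1] + b[F]$ with $a, b \in \Z_{\ge 0}$ not both zero, and $-K_X \cdot C = a\ell_1 + 2b \ge 2$.

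To prove $\ell_1 \ge 2$, I would first verify that $\varphi_2|_E : E \to Y_2$ is finite surjective. Every irreducible curve inside $E$ is contracted by $\varphi_1$ and so has class in $R_1$; if any $\varphi_2$-fiber were contained in $E$, its class would lie in $R_1 \cap R_2 = \{0\}$, which is impossible. Hence $\varphi_2|_E$ has no positive-dimensional fibers, and because $\dim E = n-1 = \dim Y_2$, it is finite surjective. I would then invoke the classification of extremal divisorial contractions of smooth projective varieties sending the exceptional divisor to a point, due to Ando and refined by Wi\'sniewski: the pair $(E, N_{E/X})$ must be one of $(\P^{n-1}, \cO(-1))$, $(Q^{n-1}, \cO_{Q^{n-1}}(-1))$, or $(\P^{n-1}, \cO(-2))$. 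Applying adjunction $-K_X|_E = -K_E + N_{E/X}$ to a minimal rational curve $C_1 \subset E$ then yields $\ell_1 = -K_X \cdot C_1 \in \{n-1, n-2, n-2\}$, each of which is at least $2$ when $n \ge 4$.

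The main obstacle I anticipate is the boundary case $n = 3$, where the latter two possibilities in the classification naively give $\ell_1 = 1$. To exclude them I would use the $\P^1$-bundle structure itself: combining the finite surjection $\varphi_2|_E : E \to Y_2$ from the previous step with the constraint $\rho(Y_2) = 1$ and the description of $X$ as $\P(\cE)$ for a rank-two bundle $\cE$ on $Y_2$, I expect an intersection-theoretic analysis to show that the only $(E, N_{E/X})$ compatible with all the hypotheses is the blow-up case $(\P^{n-1}, \cO(-1))$, for which $\ell_1 = n-1 \ge 2$ holds even when $n = 3$. This rigidity step is where I expect the real work to lie.
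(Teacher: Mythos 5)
Your reduction to proving $\ell(R_1)\geq 2$, and your observation that $\varphi_2|_E\colon E\to Y_2$ is finite surjective, are both fine. The step that fails is the one carrying all the weight: the appeal to a ``classification of extremal divisorial contractions sending the exceptional divisor to a point'' with only the three outcomes $(\P^{n-1},\cO(-1))$, $(Q^{n-1},\cO_{Q^{n-1}}(-1))$, $(\P^{n-1},\cO(-2))$. No such classification exists for $n\geq 4$ (and even Mori's three\-dimensional list contains a further case, the singular quadric cone). More importantly, the three pairs you list are exactly those that occur when one \emph{additionally} knows $\ell(R_1)\geq n-2$; that extra hypothesis is in force elsewhere in this paper (via $\tau_X\geq n-2$, cf.\ Proposition~\ref{prop:bir:fiber:n-1:0}) but is not among the hypotheses of the statement you are proving. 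Without it, $(E,N_{E/X})=(\P^{n-1},\cO(-a))$ occurs for every $1\leq a\leq n-1$: the variety $X=\P(\cO_{\P^{n-1}}\oplus\cO_{\P^{n-1}}(a))$ is a smooth Fano variety with $\rho_X=2$ whose negative section $E\cong\P^{n-1}$ (normal bundle $\cO(-a)$) is contracted to a point by an elementary contraction, the other contraction being the $\P^1$-bundle over $\P^{n-1}$; a line in $E$ has anticanonical degree $n-a$. For $a=n-1$ (already $a=2$ when $n=3$) this degree equals $1$, so these varieties satisfy every stated hypothesis while having $\iota_X=\ell_X=1$. Hence the ``rigidity step'' you anticipate for excluding the low\-degree cases cannot succeed --- those cases genuinely occur --- and no argument can close this gap without importing additional hypotheses. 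For comparison, the paper's own proof is a pointer to the proof of \cite[Proposition~2]{Tsu12}; the route there (visible also in the proof of Proposition~\ref{prop:bir:fiber:n-1:0} via \cite[Proposition~2.2]{Fuj12}) is to show that $E$ is a section of the $\P^1$-bundle and $X\cong\P(\cO_{Y_2}\oplus\cO_{Y_2}(s))$ with $0<s<i_{Y_2}$, whence $\ell(R_1)=i_{Y_2}-s$ --- a quantity that is $\geq 2$ only for the particular varieties to which the proposition is actually applied in this paper.

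A secondary, repairable point: to get $\iota_X\geq 2$ from $\ell(R_1),\ell(R_2)\geq 2$ you assert that every irreducible curve has class $a[C_1]+b[F]$ with $a,b\in\Z_{\geq 0}$. A priori the coefficients are only nonnegative reals, and $a\ell_1+2b$ has no positive lower bound over nonzero real $(a,b)$, so integrality must be justified (for instance by producing divisors dual to $[C_1],[F]$ once $E$ is known to be a section with $E\cdot F=1$), or one should argue directly: a curve $C\subset E$ has $[C]\in R_1$, while $C\not\subset E$ gives $E\cdot C\geq 0$ and its degree can be bounded using the $\P^1$-bundle structure.
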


\begin{proof} This follows from the proof of \cite[Proposition~2]{Tsu12}.
\end{proof}

The structure of a contraction of a $K_X$-negative extremal ray $R$ is constrained by the following inequality, which relates the dimensions of the exceptional divisor and the fiber, as well as the length of $R$:
\begin{proposition}[{Ionescu-Wi\'sniewski inequality \cite[Theorem~0.4]{Ion86}, \cite[Theorem~1.1]{Wis91} }]\label{prop:Ion:Wis}\rm Let $X$ be a smooth projective variety, and let $\varphi: X \to Y$ be a contraction of a $K_X$-negative extremal ray $R$, with $E$ representing its exceptional locus. Consider $F$ as an irreducible component of a non-trivial fiber of $\varphi$. Then
\begin{eqnarray}\label{IW:inequal} \nonumber
\dim E + \dim F \geq  \dim X + \ell(R)- 1.
\end{eqnarray}
\end{proposition}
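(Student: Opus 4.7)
The plan is a Mori-style deformation-theoretic dimension count on the universal family of minimal extremal rational curves.

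First, I would fix an irreducible component $F$ of a non-trivial fiber of $\varphi$ over $y\in Y$, choose a general point $x\in F$, and a minimal extremal rational curve $f\colon\P^1\to X$ with $f(0)=x$ and $[f_{\ast}\P^1]\in R$, so that $-K_X\cdot f_{\ast}[\P^1]=\ell(R)$. A standard Euler-characteristic estimate gives
\[
\dim_{[f]}\Hom(\P^1,X;\,0\mapsto x)\;\geq\;\chi\bigl(f^{\ast}T_X\otimes\cO_{\P^1}(-1)\bigr)\;=\;\ell(R).
\]
Modding out by the two-dimensional stabilizer $\Aut(\P^1,0)$ produces an unparametrized family of rational curves through $x$ of dimension at least $\ell(R)-2$. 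Each such curve has numerical class in $R$, hence is contracted by $\varphi$ and lies in $\varphi^{-1}(y)$; by the generality of $x\in F$ these curves lie inside $F$. The locus they sweep out is therefore a subvariety of $F$ of dimension at least $\ell(R)-1$, yielding the a priori bound $\dim F\geq\ell(R)-1$.

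Next, I would strengthen this to the full inequality $\dim E+\dim F\geq n+\ell(R)-1$ by performing the analogous count without fixing the marked point. Replacing $F$, if necessary, by the generic fiber of $\varphi|_E$ (this only makes the inequality harder to prove), one may assume $\dim E=\dim\varphi(E)+\dim F$. The unparametrized family $V$ of all deformations of $C$ has $\dim V\geq n+\ell(R)-3$, and every curve in $V$ is contracted by $\varphi$. The morphism $\alpha\colon V\to Y$ sending a cycle to its image under $\varphi$ has image inside $\varphi(E)$, and the fiber over a generic $y'\in\varphi(E)$ parametrizes rational curves of class $R$ contained in a single fiber of $\varphi$. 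Bounding this fiber dimension above by $2\dim F-2$ via the Euler-characteristic estimate applied inside a smooth fiber of $\varphi$ (together with the adjunction formula relating $K_X$ and $K_F$), and combining with $\dim E=\dim\varphi(E)+\dim F$, gives
\[
\dim E+\dim F\;\geq\;\dim V+2\;\geq\;n+\ell(R)-1.
\]

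The main obstacle is establishing the sharp bound on the dimension of the family of rational curves of class $R$ inside a fiber of $\varphi$; this requires smoothness of the generic fiber and careful use of adjunction to translate the estimate $-K_X\cdot C=\ell(R)$ into a usable estimate for $-K_F\cdot C$. A secondary technical point is ruling out degeneration of $C$ into a reducible chain under deformation, which is automatic from the minimality of $\ell(R)$: any such degeneration would produce a rational curve in the class of $R$ of strictly smaller anticanonical degree, contradicting the choice of $C$.
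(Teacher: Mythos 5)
This proposition is not proved in the paper at all: it is quoted from Ionescu and Wi\'sniewski, so your sketch has to be measured against the standard argument, which in this paper's own language is essentially Proposition~\ref{prop:Ion:Wis:2} applied to a well-chosen family. Your arithmetic skeleton is the right one (the lower bound $\dim V\geq \dim X+\ell(R)-3$ from $\chi(f^{\ast}T_X)$, then trading the fibre dimension of $\alpha\colon V\to Y$ against the locus swept inside a fibre), but two steps would fail as written. First, the crucial upper bound $\dim\alpha^{-1}(y')\leq 2\dim F-2$ cannot be obtained from ``the Euler-characteristic estimate applied inside a smooth fiber of $\varphi$ together with adjunction'': Riemann--Roch/Euler-characteristic computations give \emph{lower} bounds on deformation spaces, not upper bounds; the fibres of $\varphi$ need not be smooth, nor are they divisors, so adjunction is unavailable. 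The correct tool is Mori's bend-and-break: an unsplit family has only finitely many members through two fixed points, and it is this finiteness that bounds the family of contracted curves inside a fibre by twice the dimension of the locus they sweep, minus two. Second, and more seriously, your count only controls the fibre components actually met by the chosen family $V$ of minimal extremal curves. There need not exist a rational curve of anticanonical degree exactly $\ell(R)$ through a general point of a \emph{prescribed} component $F$, and $\varphi(\mathrm{Locus}(V))$ may be a proper closed subset of $\varphi(E)$ over which the fibres of $\varphi$ jump; consequently your reduction ``replace $F$ by the generic fiber of $\varphi|_{E}$'' establishes the inequality only for those special components, not for the arbitrary $F$ of the statement.

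The standard repair starts from the given $F$: take $x\in F$ general and not on any other component of the fibre, a rational curve $C\ni x$ contracted by $\varphi$ whose anticanonical degree is minimal among such curves through $x$ (this degree is $\geq\ell(R)$, not necessarily equal to it), and the family $V$ of deformations of $C$, which is then locally unsplit at $x$. Every $V$-curve has class in $R$, so $\mathrm{Locus}(V)\subseteq E$ and $\mathrm{Locus}(V_x)\subseteq F$, and the dimension estimate of Proposition~\ref{prop:Ion:Wis:2} gives
$\dim \mathrm{Locus}(V)+\dim \mathrm{Locus}(V_x)\geq \dim X+\deg_{(-K_X)}V-1\geq \dim X+\ell(R)-1$,
which is the assertion. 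You should also note that the existence of a contracted rational curve through every point of a non-trivial fibre is itself a non-trivial input from Mori theory, which your write-up takes for granted.
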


It is expected that smooth Fano varieties with large pseudoindex have simple structures. We collect some results on Fano varieties with large pseudoindex:
\begin{theorem}\label{them:fano:collect}\rm Let $X$ be a smooth Fano variety of dimension $n$ with pseudoindex $\iota_X$. Then, the following hold.
\begin{enumerate}
\item If $\iota_X\geq n+1$, then $X$ is isomorphic to $\P^n$ \cite{CMSB, Keb02}. 
\item If $\iota_X= n$, then $X$ is isomorphic to $Q^n$ \cite{DH17}.
\item If $\iota_X>\dfrac{n}{2}+1$, then $\rho_X=1$ \cite{Wis90b}.
\item If $\iota_X=\dfrac{n}{2}+1$ and $\rho_X>1$, then $X$ is isomorphic to $(\P^{\iota_X-1})^2$ \cite[Corollary~4.3]{Occ06}.
\item If $\iota_X=\dfrac{n+1}{2}$ and $\rho_X>1$, then $X$ is isomorphic to one of the following:
$$\P(\cO_{\P^{\iota_X}}(2)\oplus\cO_{\P^{\iota_X}}(1)^{\oplus \iota_X-1}); \quad \P^{\iota_X-1}\times Q^{\iota_X};\quad 
\P(T_{\P^{\iota_X}});\quad \P^{\iota_X-1}\times \P^{\iota_X}$$\cite[Theorem~1.4]{Wat24}. 
\end{enumerate}
\end{theorem}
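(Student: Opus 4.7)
The plan is to prove each of the five items by invoking the corresponding result in the literature, since Theorem~\ref{them:fano:collect} is a compilation of classification theorems that are already fully established in the cited references \cite{CMSB, Keb02, DH17, Wis90b, Occ06, Wat24}. Accordingly, the ``proof'' amounts to citing each part from its source and verifying that the numerical hypotheses line up; no new argument is required.

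For the conceptual content, items (i) and (ii) rest on the Cho--Miyaoka--Shepherd-Barron / Kebekus argument and its Dedieu--H\"oring analogue for quadrics. The strategy is to pick a minimal covering family $\mathcal{K}$ of rational curves on $X$ with $(-K_X)\cdot C = \iota_X$, fix a general point $x\in X$, and study the subfamily $\mathcal{K}_x$ of curves through $x$. The pseudoindex hypothesis forces $\dim \mathcal{K}_x$ to be large enough that the tangent map of the evaluation $\mathcal{K}_x \times \P^1 \to X$ is generically surjective with the right rank, producing enough ``lines'' through $x$ to reconstruct either the projective or quadric structure globally.

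For items (iii)--(v), the strategy is to exploit the Ionescu--Wi\'sniewski inequality of Proposition~\ref{prop:Ion:Wis}. Under $\rho_X \geq 2$ one chooses two distinct extremal rays $R_1, R_2$ of lengths $\ell(R_i) \geq \iota_X$ and applies $\dim E_i + \dim F_i \geq n + \iota_X - 1$ for each; a dimension count inside $X$ rules out $\rho_X \geq 2$ once $\iota_X > n/2+1$, giving (iii). In the borderline cases (iv) and (v) the inequality is saturated, which forces both contractions to be smooth $\P^k$-bundles or quadric fibrations with well-controlled base, and these glue into one of the listed products, $\P(T_{\P^{\iota_X}})$, or the projectivized bundle $\P(\mathcal{O}_{\P^{\iota_X}}(2)\oplus\mathcal{O}_{\P^{\iota_X}}(1)^{\oplus \iota_X -1})$ via Occhetta's and the author's rigidity arguments. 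The only real obstacle is bookkeeping: one must check that the quantitative ranges in each cited theorem exactly match the ranges stated here, but this is immediate from each reference.
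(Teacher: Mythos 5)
Your proposal matches the paper exactly: the paper gives no proof of Theorem~\ref{them:fano:collect} beyond the inline citations to \cite{CMSB, Keb02}, \cite{DH17}, \cite{Wis90b}, \cite[Corollary~4.3]{Occ06} and \cite[Theorem~1.4]{Wat24}, so citing each item from its source and checking that the numerical hypotheses align is precisely what is intended. Your additional sketches of the underlying arguments (minimal rational curves for (i)--(ii), the Ionescu--Wi\'sniewski inequality for (iii)--(v)) are accurate background but not required.
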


\subsection{Relations between pseudoindex, free index, and other invariants}
\begin{definition}\label{def:wedge}\rm For a smooth Fano variety $X$, we define an invariant of $X$ as 
$$
\nu_X
:=\min \{k\in \Z_{>0}\mid  \wedge^kT_X~\mbox{is nef}\}.
$$ We call this invariant $\nu_X$ the {\it nef order} of $X$.
\end{definition} 

\begin{remark}\label{rem:wedge}\rm Let $X$ be a smooth projective variety and $E$ a vector bundle of rank $r$ on $X$. According to \cite[Theorem~3.3]{LN05}, if $\wedge^k E$ is nef for some integer $k$ with $0<k<r$, then so is $\wedge^{k+1} E$. Thus, for a smooth Fano variety $X$ and any integer $k$ with $\nu_X\leq k\leq r$, the bundle $\wedge^{k}T_X$ is nef.
\end{remark}

Let us remark that the positivity of the exterior power of the tangent bundle and our new invariant $\tau_X$ are involved as follows:

\begin{lemma}[{\cite[Lemma~1.3]{CP92}, \cite[Lemma~2.9]{Yas12}, \cite[Lemma~3.2]{Wat21}}]\label{lem:nonfree}\rm Let $X$ be a smooth Fano variety of dimension $n\geq 4$. If $\wedge^r T_X$ is nef for some $1\leq r<n$, then we have $\tau_X\geq \dim X-r+1$.
\end{lemma}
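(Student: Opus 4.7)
The plan is to analyze an arbitrary non-free rational curve $f:\P^1 \to X$ via the Grothendieck splitting $f^*T_X = \bigoplus_{i=1}^n \cO(a_i)$ with $a_1 \geq a_2 \geq \cdots \geq a_n$, and to show $\sum_{i=1}^n a_i \geq n-r+1$. Since $\sum a_i = \deg f^*(-K_X)$, this would yield $\tau_X \geq n-r+1$. Three numerical constraints on the $a_i$ are in play. First, non-constancy of $f$ gives an injection $T_{\P^1}=\cO(2) \hookrightarrow f^*T_X$, hence $a_1 \geq 2$. Second, non-freeness of $f$ means $f^*T_X$ is not nef, forcing $a_n \leq -1$. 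Third, nefness of $\wedge^r T_X$ pulls back to nefness of $\wedge^r f^*T_X$ on $\P^1$, which splits as a direct sum of line bundles $\cO(\sum_{i \in I} a_i)$ indexed by $r$-subsets $I \subset \{1,\ldots,n\}$; hence $\sum_{i \in I} a_i \geq 0$ for every such $I$.

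The key step is to upgrade this last condition at the tail to $a_{n-r+1} \geq 1$. Applying the nef condition to $I=\{n-r+1, n-r+2, \ldots, n\}$ gives $a_{n-r+1} + \cdots + a_{n-1} + a_n \geq 0$, and isolating $a_n$ on the right yields $a_{n-r+1} + \cdots + a_{n-1} \geq -a_n \geq 1$. The left-hand side is a sum of $r-1$ decreasing integers whose largest term is $a_{n-r+1}$; if $a_{n-r+1} \leq 0$ then every summand would be $\leq 0$, forcing the sum to be $\leq 0$, a contradiction. Hence $a_{n-r+1} \geq 1$, and by monotonicity $a_2, \ldots, a_{n-r} \geq 1$ as well.

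Finally, I would split $\sum_{i=1}^n a_i = \sum_{i=1}^{n-r} a_i + \sum_{i=n-r+1}^n a_i$. The tail is $\geq 0$ by the $r$-subset nefness condition. The head has $n-r$ terms with $a_1 \geq 2$ and $a_2,\ldots,a_{n-r} \geq 1$, yielding $\sum_{i=1}^{n-r} a_i \geq 2 + (n-r-1) = n-r+1$; in the edge case $n=r+1$ the head reduces to $a_1 \geq 2 = n-r+1$. I do not expect any serious obstacle, as the argument is an elementary inequality on the splitting type of $f^*T_X$. The only mild subtlety is combining integrality with monotonicity to upgrade the bound $a_{n-r+1} \geq 0$ that follows immediately from nefness of $\wedge^r T_X$ to the strict inequality $a_{n-r+1} \geq 1$, which is precisely what produces the extra $+1$ in $n-r+1$.
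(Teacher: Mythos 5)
Your proof is correct, and it is essentially the standard argument: the paper itself gives no proof of this lemma (it only cites \cite{CP92}, \cite{Yas12}, \cite{Wat21}), and those references establish it by exactly this analysis of the Grothendieck splitting $f^{\ast}T_X=\bigoplus\cO(a_i)$, using $a_1\geq 2$, $a_n\leq -1$, and nonnegativity of the $r$-fold partial sums. The only cosmetic point is the degenerate case $r=1$, where your ``sum of $r-1$ terms'' is empty; there the inequality $0\geq -a_n\geq 1$ is already a contradiction, showing no non-free curve exists and $\tau_X=+\infty$, so the conclusion still holds.
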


\begin{remark}\label{rem:inv}\rm Let $X$ be a smooth Fano variety of dimension $n\geq 4$. By definition, we have 
\begin{enumerate}
\item $\iota_X$ is a multiple of $i_X$;
\item $\tau_X\geq \iota_X$.
\end{enumerate}
Moreover by Lemma~\ref{lem:nonfree}, $\tau_X\geq n-\nu_X+1$. These imply the following:
$$
i_X\geq n-m\Longrightarrow \iota_X\geq n-m\Longrightarrow \tau_X\geq n-m;
$$
$$
m+1\geq \nu_X\Longrightarrow \tau_X\geq n-m.
$$
\end{remark}

\subsection{Deformation theory of rational curves}
To explore the structure of a uniruled variety, it is customary to investigate families of rational curves. Let us recall the space of rational curves $\rat^n(X)$, constructed as the normalization of the subscheme of the Chow scheme $\Chow(X)$ that parametrizes rational curves on $X$. An {\it irreducible component of $\rat^n(X)$} is referred to as a {\it family of rational curves} $\cM$ on $X$. This family $\cM$ is equipped with a $\P^1$-bundle $p: \cU \to \cM$, and an evaluation morphism $q: \cU \to X$, which is described in detail in \cite[Section~II.2]{Kb}.
A rational curve parametrized by $\cM$ is called an {\it $\cM$-curve}. Since any $\cM$-curve is numerically equivalent to each other, the family $\cM$ determines a numerical class; we denote it by $[\cM] \in N_1(X)$. The {\it anticanonical degree of the family} $\cM$ refers to the intersection number  denoted as $\deg_{(-K_X)} \cM$, which is calculated as $-K_X \cdot C$, where $[C]$ represents any curve belonging to the family $\cM$. A half line generated by the class $[M] \in N_1(X)$ is denoted by $\R_{\geq 0}[\cM]$. 
The union of all $\cM$-curves is denoted by ${\rm Locus}(\cM)$. For a point $x \in X$, the normalization of $p(q^{-1}(x))$ is denoted by $\cM_x$, and we use ${\rm Locus}(\cM_x)$ to refer to the union of all $\cM_x$-curves.
\begin{definition}\label{def:dom:cov:unsplit}\rm Under the above notation:
\begin{enumerate}
\item $\cM$ is a {\it dominating family} (resp. {\it covering family}) if the evaluation morphism $q: \cU \to X$ is dominant (resp. surjective).
\item $\cM$ is {\it locally unsplit} if, for a general point $x\in {\rm Locus}(\cM)$, $\cM_x$ is proper.
\item $\cM$ is {\it unsplit} if $\cM$ is proper.
\end{enumerate}
\end{definition}
For instance, a family of rational curves $\cM$ is locally unsplit if it is a dominating family with minimal degree concerning some ample line bundle on $X$. A family of rational curves $\cM$ is a dominating family if and only if there exists a free $\cM$-curve (see, for instance, \cite[IV Theorem~1.9]{Kb}). 
By definition, an unsplit family is locally unsplit, and an unsplit dominating family is a covering family. Mori's bend and break \cite[Theorem~4]{Mori79} tells us that a locally unsplit family of rational curves $\cM$ satisfies $\deg_{(-K_X)} \cM \leq \dim X +1$. We often use the following results: 

\begin{lemma}[{\cite[Lemma~2.7]{Wat}}]\label{lem:degeneration:curves}\rm Let $X$ be a smooth projective variety and $\cM\subset \rat^n(X)$ a family of rational curves. If $\cM$ is not proper, then there exists a rational $1$-cycle $Z=\sum_{i=1}^s a_i Z_i$ satisfying the following conditions:
\begin{enumerate}
\item $Z$ is algebraically equivalent to $\cM$-curves, where each $a_i$ is a positive integer and each $Z_i$ is a rational curve;
\item $\sum_{i=1}^s a_i\geq 2$.
\end{enumerate}
We term this rational $1$-cycle $Z=\sum_{i=1}^s a_i Z_i$ as a {\it degeneration of $\cM$-curves}.
\end{lemma}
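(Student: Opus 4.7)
The plan is to compactify $\cM$ inside the Chow scheme and read off the required degeneration from a boundary point. I identify $\cM$ with its image $\cM'$ in $\Chow(X)$ via the finite map $\rat^n(X)\to \Chow(X)$, which realizes $\rat^n(X)$ as the normalization of the locally closed subscheme $\cR_0\subset \Chow(X)$ parametrizing irreducible reduced rational curves. Because all $\cM$-curves share a common degree with respect to a fixed ample divisor, $\cM'$ lies in a bounded, hence quasi-projective, part of $\Chow(X)$, so its closure $\overline{\cM'}\subset \Chow(X)$ is a projective variety.

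Since $\cM$ is not proper and $\rat^n(X)\to \cR_0$ is finite, $\cM'$ is not proper either, so $\overline{\cM'}\setminus \cM'$ is nonempty; choose a point $c$ there. It corresponds to a $1$-cycle $Z=\sum_{i=1}^s a_i Z_i$ on $X$ with positive multiplicities $a_i$ and distinct irreducible reduced curves $Z_i$. Selecting a smooth curve $T$ and a morphism $T\to\overline{\cM'}$ hitting $c$ with generic point mapped into $\cM'$, the pullback of the universal family over $\Chow(X)$ is a flat family of $1$-cycles whose generic fiber is an $\cM$-curve and whose special fiber is $Z$. Flatness yields algebraic equivalence of $Z$ to an $\cM$-curve, giving condition (i) up to the rationality of the $Z_i$. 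Rationality then follows from Mori's bend and break, or equivalently from lower semicontinuity of geometric genus of normalized fibers: the generic fiber is a smooth rational curve, so every component of the normalized special fiber has geometric genus zero.

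For (ii), the normalization $\rat^n(X)\to \cR_0$ sends the component $\cM$ birationally onto one of the irreducible components of $\cR_0$, so $\cM'$ is closed in $\cR_0$. Hence every boundary point in $\overline{\cM'}\setminus \cM'$ lies outside $\cR_0$, which means the associated cycle is either reducible or non-reduced, forcing $\sum_{i=1}^s a_i \geq 2$. I expect the main subtlety to be exactly this step, where one must confirm that a Chow-limit of irreducible reduced $\cM$-curves which still happens to be an irreducible reduced rational curve must in fact lie in $\cM'$, and not in some neighboring component of $\cR_0$; this rests on the irreducibility of $\overline{\cM'}$ together with the component-to-component correspondence furnished by the normalization.
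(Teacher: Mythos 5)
The paper itself contains no proof of this lemma: it is quoted directly from \cite[Lemma~2.7]{Wat}, so there is no in-text argument to compare against. Your proof is correct and is the standard argument for this statement: compactify the image $\cM'$ of $\cM$ in $\Chow(X)$ (bounded because all $\cM$-curves share a numerical class), extract a boundary cycle, note its components are rational, and rule out $\sum_{i=1}^s a_i=1$ on the grounds that $\cM'$ is an irreducible component of the locus $\cR_0$ of irreducible reduced rational curves, hence closed in $\cR_0$, so $\overline{\cM'}\cap\cR_0=\cM'$ contains no boundary point. Two cosmetic points could be tightened: the generic member of $\cM$ is an irreducible reduced rational curve but need not be smooth, so one should pass to the normalization of the total space of the family (or to a parametrization by $\P^1$) before invoking semicontinuity of geometric genus; and the universal family over $\Chow(X)$ is a well-defined family of algebraic cycles rather than a flat family, which is exactly what gives algebraic equivalence of the fibers. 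Neither affects the validity of the argument.
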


\begin{proposition}[{\cite[IV Corollary~2.6]{Kb}}]\label{prop:Ion:Wis:2}\rm Let $X$ be a smooth projective variety and $\cM$ a locally unsplit family of rational curves on $X$. For a general point $x \in {\rm Locus}(\cM)$, 
$$\dim {\rm Locus}(\cM_x) \geq \deg_{(-K_X)}\cM+\codim_X{\rm Locus}(\cM) -1.
$$ 
Moreover, if $\cM$ is unsplit, the above inequality holds for any point $x \in {\rm Locus}(\cM)$.
\end{proposition}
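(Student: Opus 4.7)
The plan is a standard deformation-theoretic dimension count, in the spirit of \cite[Chap.~II]{Kb}. The argument has three steps: (i) bound $\dim\cM$ via the Hom-scheme estimate; (ii) pass to $\dim\cM_x$ using the dominance of the evaluation $q$; (iii) transfer to $\dim\Loc(\cM_x)$ using the locally unsplit hypothesis together with Mori's bend-and-break.

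For step (i), for any $[f]\in\cM$ with $f:\P^1\to X$ and $C=f_*[\P^1]$, Riemann--Roch on $\P^1$ gives $\chi(f^*T_X)=\deg_{(-K_X)}\cM+\dim X$, and the tangent-obstruction estimate yields $\dim_{[f]}\Hom(\P^1,X)\geq \chi(f^*T_X)$. Quotienting by the $3$-dimensional $\Aut(\P^1)$-action gives $\dim\cM\geq \deg_{(-K_X)}\cM+\dim X-3$. For step (ii), at a general $x\in\Loc(\cM)$ the dominant morphism $q:\cU\to\Loc(\cM)$ has fiber of dimension $\dim\cU-\dim\Loc(\cM)$; since $p|_{q^{-1}(x)}$ is generically finite onto $\cM_x$ (only finitely many points of a generic $\cM_x$-curve map to $x$), this yields $\dim\cM_x=\dim\cM+1-\dim\Loc(\cM)$, whence
$$\dim\cM_x\;\geq\;\deg_{(-K_X)}\cM+\codim_X\Loc(\cM)-2.$$

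For step (iii), the locally unsplit hypothesis forces $\cM_x$ to be proper for general $x$. The universal family $\cU_x:=p^{-1}(\cM_x)$ is a $\P^1$-bundle over $\cM_x$ of dimension $\dim\cM_x+1$, and $q$ restricts to a proper morphism onto $\Loc(\cM_x)$. The key claim is that its general fiber is zero-dimensional: a positive-dimensional fiber over a general $y\neq x$ would produce a positive-dimensional family of $\cM$-curves passing through both $x$ and $y$, which by Mori's bend-and-break would degenerate into a reducible $1$-cycle, contradicting the properness of $\cM_x$. Hence $\dim\Loc(\cM_x)=\dim\cM_x+1\geq \deg_{(-K_X)}\cM+\codim_X\Loc(\cM)-1$. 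For the \emph{moreover} statement, if $\cM$ is unsplit then $\cM$ itself is proper, so $\cM_x$ is proper for every $x\in\Loc(\cM)$, and the same argument applies pointwise. The delicate point is the bend-and-break step in (iii): one must verify that the properness of $\cM_x$ alone (and not of $\cM$) already suffices to rule out positive-dimensional fibers of $q|_{\cU_x}$.
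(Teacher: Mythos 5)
Your argument is correct and is essentially the standard proof of the cited result \cite[IV Corollary~2.6]{Kb} (the paper itself offers no proof beyond that citation): the Hom-scheme estimate $\dim\cM\geq -K_X\cdot C+\dim X-3$, the two fiber-dimension counts through the universal family, and the bend-and-break finiteness of curves through two fixed points. On the point you flag, properness of $\cM_x$ alone does suffice: the curves through $x$ and a second point $y$ form a closed, hence proper, subvariety of $\cM_x$, so if it were positive-dimensional the degenerate limit cycle produced by bend-and-break would lie in the (closed) image of $\cM_x$ in ${\rm Chow}(X)$, which consists only of integral curves.
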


For a smooth projective variety $X$, let $\cM$ be an unsplit covering family of rational curves on $X$. Two points $x_1, x_2 \in X$ are {\it $\cM$-equivalent} if connected by a connected chain of $\cM$-curves. By \cite{Cam92, KMM92} (see also \cite[Chapter~5]{DebB} and \cite[IV Theorem~4.16]{Kb}), there exists a proper surjective morphism $\varphi: X^0\to Y$ from a nonempty open subset $X^0 \subset X$ onto a normal variety $Y$ whose fibers are $\cM$-equivalent classes. We call $\varphi: X^0\to Y$ the {\it rationally chain connected fibration with respect to $\cM$} (RCC-fibration with respect to $\cM$ for short). When $X^0$ coincides with the whole space $X$, we say that $\varphi: X=X^0\to Y$ is the {\it geometric quotient for $\cM$}.

\begin{theorem}[{cf. \cite[Theorem~5.2]{DPS94}, \cite[Theorem~4.4]{SW04}, \cite[Theorem~2.2, 2.3]{Kane18}}]\label{them:sm:quot}\rm Let $X$ be a smooth projective variety and $\cM$ its unsplit covering family of rational curves. If any $\cM$-curve is free, then $\R_{\geq 0}[\cM]$ is an extremal ray of $\overline{\NE}(X)$ and the associated contraction is a smooth geometric quotient $\varphi: X\to Y$ for $\cM$.
\end{theorem}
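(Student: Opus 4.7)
The plan is to prove the three assertions of the theorem in order: extremality of $\R_{\geq 0}[\cM]$, the existence and identification of the extremal contraction with the geometric quotient for $\cM$, and the smoothness of that contraction.

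\textbf{Extremality.} Since $\cM$ is unsplit and covering, Lemma~\ref{lem:degeneration:curves} forbids any degeneration of $\cM$-curves into reducible $1$-cycles. Combined with the standard cone argument (see \cite[IV Theorem~4.16]{Kb}), this shows that every effective curve class proportional to $[\cM]$ in $\overline{\NE}(X)$ is a non-negative multiple of $[\cM]$, so $\R_{\geq 0}[\cM]$ is an extremal ray. The Cone and Contraction Theorems then produce the associated elementary contraction $\varphi: X \to Y$.

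\textbf{Smoothness of the universal evaluation.} Here the freeness hypothesis enters in a crucial way. For every $\cM$-curve $f: \P^1 \to X$, the pullback $f^{\ast}T_X$ is nef, so by standard deformation theory (see \cite[II.3]{Kb}) the evaluation morphism $q: \cU \to X$ is a smooth surjection. Together with properness of $\cM$ (unsplitness), this forces $\cM_x$ to be a smooth projective scheme for every $x \in X$, not merely for a general one; consequently each iterated chain locus obtained from $x \mapsto \Loc(\cM_x)$ is smooth and projective at every stage. These iterates stabilize to the $\cM$-equivalence classes, so the latter are smooth projective subvarieties of $X$ of locally constant dimension.

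\textbf{Assembly into a smooth morphism.} To see that this partition is the fiber structure of a morphism, I would show that the tangent subspaces spanned by free $\cM$-curves through each point assemble into an involutive algebraic distribution, whose integral leaves are proper and smooth of constant dimension. Since a curve is contracted by $\varphi$ if and only if its class lies in $\R_{\geq 0}[\cM]$, equivalently it is a chain of $\cM$-curves, these leaves must coincide with the fibers of $\varphi$. It follows that $\varphi: X \to Y$ is the geometric quotient for $\cM$ and is a smooth morphism, completing the proof.

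The main obstacle is extending the rational chain quotient of \cite{Cam92, KMM92}, which a priori is defined only on some open $X^0 \subset X$, to all of $X$ with smooth fibers of constant dimension; the assumption that \emph{every} $\cM$-curve is free is precisely what prevents fiber dimensions from jumping or singularities from appearing on $X \setminus X^0$.
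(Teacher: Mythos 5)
Your overall strategy (freeness $\Rightarrow$ smooth evaluation, unsplitness $\Rightarrow$ proper chain loci, then assemble the quotient) points in the right direction, but two of your three steps contain genuine gaps. First, the extremality argument does not work as stated: the observation that "every effective curve class proportional to $[\cM]$ is a non-negative multiple of $[\cM]$" is true of any nonzero class and is not what extremality means, and unsplitness of a covering family does not by itself force $\R_{\geq 0}[\cM]$ to be an extremal ray — this is exactly why Theorem~\ref{them:unsplit:mor} in the paper needs the extra hypothesis $\deg_{(-K_X)}\cM\geq \dim X-2$. In the argument the paper actually relies on (it defers to \cite[Proposition~4.4]{KW23}, which follows \cite{SW04} and \cite{BCD07}), the logical order is the reverse of yours: one first shows that every $\cM$-chain locus is proper, concludes from \cite{BCD07} that the rationally chain connected fibration is an everywhere-defined proper morphism $\varphi: X\to Y$ contracting exactly the curves whose class is proportional to $[\cM]$, and only then deduces that $\R_{\geq 0}[\cM]=\NE(\varphi)$ is an extremal ray. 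Invoking the Contraction Theorem before extremality has been established is circular.

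Second, smoothness of the fibers does not follow from smoothness of $q:\cU\to X$ in the way you claim: $\Loc(\cM_x)$ is the image $q\bigl(p^{-1}(p(q^{-1}(x)))\bigr)$ of a smooth projective scheme, and images of smooth varieties under morphisms need not be smooth, so "each iterated chain locus is smooth at every stage" is unjustified — this is precisely the hard content of the theorem. Likewise the "involutive algebraic distribution" in your last step is asserted rather than constructed: you give no argument that the tangent directions of $\cM$-curves through a point span a subspace of constant rank, nor that the resulting distribution is closed under brackets, nor that its leaves are algebraic. The standard proof instead establishes that $Y$ is smooth and $\varphi$ is equidimensional (hence flat), and then obtains smoothness of $\varphi$ from the surjectivity of its differential at every point; it is at that final step, not in a foliation argument, that the freeness of \emph{every} (and not merely the general) $\cM$-curve is used.
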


\begin{proof} This follows from the same proof as in \cite[Proposition~4.4]{KW23}.
\end{proof}

\begin{theorem}\label{them:unsplit:mor} Let $X$ be a smooth projective variety and $\cM$ its unsplit covering family of rational curves. If $\deg_{(-K_X)}\cM\geq \dim X-2$, then $\R_{\geq 0}[\cM]$ is an extremal ray of $\overline{\NE}(X)$ and the associated contraction is a geometric quotient $\varphi: X\to Y$ for $\cM$.   
\end{theorem}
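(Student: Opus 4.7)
The plan is twofold: first extend the RCC-fibration with respect to $\cM$ to a morphism $\varphi : X \to Y$ defined on all of $X$ (the geometric quotient), then invoke the standard machinery for unsplit covering families to conclude that $\R_{\geq 0}[\cM]$ is an extremal ray of $\cNE(X)$ whose Mori contraction is precisely $\varphi$. I would follow the argument of \cite[Theorem~2.2, 2.3]{Kane18} and \cite[Proposition~4.4]{KW23}; the degree hypothesis $\deg_{(-K_X)}\cM \geq \dim X - 2$ enters solely through Proposition~\ref{prop:Ion:Wis:2}.

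For the geometric quotient, fix $x \in X$. Unsplitness of $\cM$ implies that $\cM_x$ is proper, so $\Loc(\cM_x)$ is closed in $X$; Proposition~\ref{prop:Ion:Wis:2} then yields $\dim \Loc(\cM_x) \geq \deg_{(-K_X)}\cM - 1 \geq \dim X - 3$ at every point. Iteratively defining $\Cloc_k(\cM_x)$, each remains closed (again by unsplitness), and the ascending sequence stabilizes at a closed subvariety $F_x$ which is the $\cM$-equivalence class of $x$. The uniform dimension bound, combined with the constancy of fiber dimension on the open subset $X^0 \subset X$ where the RCC-fibration $\varphi_0 : X^0 \to Y^0$ of \cite{Cam92, KMM92} is defined, forces all $F_x$ to share a common dimension, so $\varphi_0$ extends to a proper surjective morphism $\varphi : X \to Y$ with fibers $F_x$.

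Once $\varphi$ is known to be a morphism, extremality of $\R_{\geq 0}[\cM]$ and identification with its Mori contraction proceed by standard arguments. For any irreducible curve $C$ in a fiber, a deformation argument using the unsplit covering family $\cM$ (following the lines of \cite[Proposition~4.4]{KW23}) together with Lemma~\ref{lem:degeneration:curves} shows $[C] \in \R_{\geq 0}[\cM]$: if $[C]$ were independent of $[\cM]$, concatenating $C$ with $\cM$-curves and applying bend-and-break would produce a nontrivial degeneration of $\cM$-curves, contradicting unsplitness. Consequently the face of $\cNE(X)$ of $\varphi$-contracted classes is exactly $\R_{\geq 0}[\cM]$, which is therefore an extremal ray whose Mori contraction is $\varphi$.

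The main obstacle is the extension step: proving that chain-equivalence classes form a closed partition of $X$ with constant dimension, rather than merely an open-subset partition. Unsplitness of $\cM$ provides the closedness of each individual class, but the uniform dimension across all of $X$ is precisely what the degree hypothesis $\deg_{(-K_X)}\cM \geq \dim X - 2$ supplies through the Ionescu-Wi\'sniewski bound in Proposition~\ref{prop:Ion:Wis:2}; the weaker estimate $\dim F_x \geq \dim X - 3$ is enough, because only constancy of fiber dimension (not a specific value) is needed to turn the rational RCC-quotient into a morphism.
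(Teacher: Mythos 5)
The paper's own proof is a two-line citation: Proposition~\ref{prop:Ion:Wis:2} gives $\dim \Loc(\cM_x)\geq \deg_{(-K_X)}\cM-1\geq \dim X-3$ for every $x$ (unsplitness makes the bound valid at all points), and then \cite[Theorem~2]{BCD07} is invoked, which is precisely the statement that a covering quasi-unsplit family whose loci have codimension at most three admits a geometric quotient that is the contraction of the extremal ray $\R_{\geq 0}[\cM]$. Your proposal tries to reprove that black box, and the attempt has a genuine gap at its central step.

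The gap is your claim that ``the uniform dimension bound, combined with the constancy of fiber dimension on $X^0$, forces all $F_x$ to share a common dimension.'' Nothing forces this. The inequality from Proposition~\ref{prop:Ion:Wis:2} is only a \emph{lower} bound $\dim F_x\geq \dim X-3$; if the general class has dimension $\dim X-\dim Y$ with $\dim Y\in\{2,3\}$, special classes can (and do) jump to larger dimension while still satisfying the lower bound. Indeed, in this very paper the contraction produced by Theorem~\ref{them:unsplit:mor} is allowed to be non-equidimensional: case (ii) of Theorem~\ref{them:1} is a $\P^{n-3}$-fibration that is \emph{not} equidimensional, and the proof of Theorem~\ref{them:1} explicitly introduces the jumping locus $B$ of $\varphi$ and only bounds it by $\dim B\leq n-2$ via \cite[Proposition~1]{BCD07}. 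So equidimensionality is false in general, and your route to extending the RCC-fibration to a morphism collapses. The real content of \cite[Theorem~2]{BCD07} is exactly to establish that the rationally connected fibration is an everywhere-defined morphism and that $\R_{\geq 0}[\cM]$ is extremal \emph{in the presence of} possibly jumping equivalence classes, using the codimension-at-most-three hypothesis in an essential and nontrivial way; this cannot be replaced by an appeal to constancy of fiber dimension. (Your closing step, that curves in fibers lie in $\R_{\geq 0}[\cM]$, is the easy part and is standard via \cite[II, Corollary~4.21]{Kb} once the morphism exists.)
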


\begin{proof} This follows from Proposition~\ref{prop:Ion:Wis:2} and \cite[Theorem~2]{BCD07}.
\end{proof}

\begin{remark}\label{rem:cycle}\rm 
Instead of the family of rational curves, we often deal with the family of $1$-cycles, like \cite{BCD07}. Theorem~\ref{them:unsplit:mor} also holds for an irreducible closed subset of ${\rm Chow}(X)$ which parametrizes $1$-cycles. See \cite{BCD07} for details.
\end{remark}


\if0
次は不要かもしれない。

\begin{proposition}\label{prop:unsplit:fibration} \rm  Assume that $X$ is a Fano variety such that $r=3$, $n=\dim X \geq 4$ and $\rho_X\geq 2$. Let $\cM$ be a minimal rational component. Then the following hold:
\begin{enumerate}
\item $\cM$ is unsplit;
\item for the RCC-fibration $\varphi: X \dashrightarrow Z$ with respect to $\cM$, $\varphi$ is a contraction of the extremal ray $\R_{\geq 0}[\cM]$, which is the geometric quotient of $\cM$.
\end{enumerate}
Moreover, if there is a non-free $\cM$-curve, the relative dimension of $\varphi$ is at least $n-3$. On the other hand, if any $\cM$-curve is free, then $\varphi$ is a smooth morphism.
\end{proposition}

\begin{proof} By Mori's bend and break, $\deg_{(-K_X)}\cM$ is at most $n+1$. If $\deg_{(-K_X)}\cM=n+1$, then \cite{CMSB} tells us that $X$ is the projective space $\P^n$; it contradicts to the assumption $\rho_X\geq 2$. Thus we have $\deg_{(-K_X)}\cM\leq n$. We claim that $\cM$ is unsplit. To prove this, we assume the contrary, that is, $\cM$ is not unsplit. Then there exists a degeneration of $\cM$-curves $\sum_{i=1}^s a_i Z_i$ as in Lemma~\ref{lem:degeneration:curves}. We remark that $Z_i$'s are non-free rational curves. By Lemma~\ref{lem:nonfree}, we obtain 
$$
2(n-2)\leq \sum_{i=1}^s a_i (-K_X)\cdot Z_i=\deg_{(-K_X)}\cM\leq n.
$$
This implies that $n=4$ and $\deg_{(-K_X)}\cM= 4$. For general $x\in X$, applying Proposition~\ref{prop:Ion:Wis:2}, we obtain 
$$
\dim {\rm Locus}(\cM_x) \geq \deg_{(-K_X)}\cM-1=3.
$$
According to Theorem~\ref{them:unsplit:mor}, $\varphi$ is a contraction of $\R_{\geq 0}[\cM]$ whose relative dimension is three. Since $X$ is a Fano variety, $Z$ is a projective line $\P^1$. Thanks to Proposition~\ref{prop:charact:P1:product}, this yields that $X$ is a product $\P^1\times Y$ for some smooth projective threefold $Y$ with nef $\wedge^2 T_Y$. Consider the image of $\cM$ by the natural morphism $\tau: \rat^n(X/\P^1)\to \rat^n(Y)$. Since $\cM$ is not unsplit, so is $\tau(\cM)$. On the other hand, Lemma~\ref{lem:degeneration:curves} and \ref{lem:nonfree} imply that any minimal rational component of $Y$ is unsplit; it is a contradiction. As a consequence, we see that $\cM$ is unsplit. 
Then the remaining part follows from Lemma~\ref{lem:nonfree}, Theorem~\ref{them:sm:quot} and Theorem~\ref{them:unsplit:mor}.

\end{proof}
\fi

\section{Contractions of fiber type} 
\begin{proposition}\label{prop:charact:P1:product} \rm  Let $X$ be a smooth Fano variety of dimension $n$ and $\iota_X\geq2$. Assume a contraction $\varphi: X \to \P^1$ exists. Then $X$ is a product of $\P^1$ and a variety $Z$. 
\end{proposition}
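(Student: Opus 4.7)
The plan is to prove that $\varphi$ is a smooth Fano fibration over $\P^1$, and then trivialize it using a section with trivial normal bundle.

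First, I pass to the Stein factorization: since $X$ is Fano (hence rationally connected, hence simply connected) and $\P^1$ is simply connected, the Stein factorization has base $\P^1$, so $\varphi$ already has connected fibers. A general fiber $F$ is smooth, and since any two fibers of $\varphi$ are linearly equivalent disjoint Cartier divisors, $N_{F/X}\cong \cO_F$. By adjunction $-K_F=-K_X|_F$ is ample, so $F$ is a Fano $(n-1)$-fold with $\iota_F\geq \iota_X\geq 2$.

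Next, I prove $\varphi$ is smooth. Choose a minimal dominating family $\cM_F$ of rational curves on $F$; this family is unsplit, and its curves are free on $F$. The corresponding family $\cM$ on $X$ consists of free rational curves in fibers of $\varphi$: the sequence $0\to N_{C/F}\to N_{C/X}\to \cO_C\to 0$ splits because $N_{C/F}$ is nef on $C\cong \P^1$, so $N_{C/X}$ is nef. Since any specialization of an $\cM$-curve is confined to a single fiber of $\varphi$, where $\cM_F$ is unsplit, the family $\cM$ is itself unsplit. Theorem~\ref{them:sm:quot} then guarantees that $\R_{\geq 0}[\cM]$ is extremal and the associated contraction $\psi\colon X\to Y$ is a smooth geometric quotient; since $\cM$-curves are vertical for $\varphi$, $\psi$ factors through $\varphi$, and iterating the procedure on the fibers of $\psi$ (which are themselves smooth Fano varieties with pseudoindex $\geq 2$) identifies $\psi$ with $\varphi$, proving smoothness of $\varphi$.

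Finally, by Graber--Harris--Starr applied to the smooth Fano fibration $\varphi$, there exists a section $s\colon \P^1\to X$. Pick one of minimal $(-K_X)$-degree $d\geq \iota_X\geq 2$. Using Mori's bend-and-break together with $\iota_X\geq 2$, I expect to show $d=2$: any reducible degeneration of $s$ fixing two general points would split as a section of degree $\geq d$ plus one or more vertical rational curves of $(-K_X)$-degree $\geq 2$, forcing the total degree to exceed $d$. Once $d=2$, the normal bundle $N_{s/X}$ has degree zero, and since $s$ is free it is $N_{s/X}\cong \cO_{\P^1}^{\oplus(n-1)}$. The unobstructed $(n-1)$-dimensional family of section-deformations consists of pairwise disjoint sections sweeping $X$ exactly once; their parameter space $Z$, paired with $\varphi$, gives the desired isomorphism $X\cong \P^1\times Z$.

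The main obstacle will be the identification $\psi=\varphi$ in the smoothness step (carefully tracking the RCC-fibration on each fiber until no further refinement is possible) and the verification that the section-deformations are genuinely pairwise disjoint and cover $X$ one-to-one; both issues rely delicately on the bound $\iota_X\geq 2$ via bend-and-break.
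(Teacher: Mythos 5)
There are two genuine gaps. First, in your smoothness step, the family $\cM$ obtained by deforming inside $X$ a minimal free curve of a general fiber $F$ need be neither unsplit nor composed of free curves: degenerations and non-free members of $\cM$ live in \emph{special} fibers of $\varphi$, where the unsplitness of $\cM_F$ and your freeness computation on $F$ give no information, and since the anticanonical degree of $\cM$ can be as large as $n$, the hypothesis $\iota_X\geq 2$ does not force unsplitness either. Hence Theorem~\ref{them:sm:quot} does not apply as stated. Moreover, even when it does apply, the quotient $\psi$ it produces is the quotient by the chosen \emph{minimal} family and need not coincide with $\varphi$ (if $F$ has Picard number greater than one, the minimal family only contracts part of each fiber), so ``iterating'' does not identify $\psi$ with $\varphi$.

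Second, and more seriously, the claim $d=2$ for the minimal section is unjustified. Your bend-and-break argument only shows that a minimal section admits no reducible degeneration; it does not produce one. To force a degeneration through two fixed points the section must move in a sufficiently large family, which requires $-K_X\cdot s$ on the order of $n+2$; for $2<d\leq n+1$ nothing breaks and no contradiction arises. Consequently $\deg N_{s/X}=d-2=0$, the triviality of $N_{s/X}$, and the disjointness and covering properties of the section family are all unproved. The paper's proof sidesteps the degree entirely: it shows that the family $\cM$ of deformations of a minimal section is unsplit (minimality together with $\iota_X\geq 2$ rules out degenerations, which would consist of a section component of degree at least $d$ plus vertical components of degree at least $2$), proves $\dim{\rm Locus}(\cM_x)=1$ for every $x$ by intersecting ${\rm Locus}(\cM_x)$ with a fiber of $\varphi$ and applying \cite[II, Corollary~4.21]{Kb}, deduces from Proposition~\ref{prop:Ion:Wis:2} that $\cM$ is covering, and then uses \cite{BCD07} to obtain a $\P^1$-bundle $\psi\colon X\to Z$ transverse to $\varphi$; finally $\varphi\times\psi$ is bijective and Zariski's main theorem gives $X\cong\P^1\times Z$. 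You would need to replace both of your steps by arguments of this kind.
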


\begin{proof} By \cite{GHS03}, there exists a minimal section $\tilde{\ell} \subset X$ of $\varphi$, that is, $\tilde{\ell} \subset X$ is a section of $\varphi$ whose anticanonical degree is minimal among such sections. Let us take a family of rational curves $\cM \subset \rat^n{(X)}$ containing $[\tilde{\ell}]$. Then it follows from Lemma~\ref{lem:degeneration:curves} that $\cM$ is unsplit. We first claim that $\dim {\rm Locus}(\cM_x)=1$ for any $x \in {\rm Locus}(\cM)$. To prove this, assume the contrary, that is, $\dim {\rm Locus}(\cM_{x_0})\geq 2$ for some $x_0 \in {\rm Locus}(\cM)$. Then we may find a point $o\in \P^1$ such that $\varphi^{-1}(o)\cap {\rm Locus}(\cM_{x_0}) \neq \emptyset$. Then we have 
$$
\dim \left( \varphi^{-1}(o)\cap {\rm Locus}(\cM_{x_0}) \right) \geq \dim \varphi^{-1}(o)+\dim {\rm Locus}(\cM_{x_0})-n\geq 1.
$$ Thus there exits a curve $C \subset \varphi^{-1}(o)\cap {\rm Locus}(\cM_{x_0})$. Applying \cite[II, Corollary~4.21]{Kb}, we obtain $[C] \in \R_{\geq 0}[\cM]$. This implies that any $\cM$-curve is contracted by $\varphi$; this is a contradiction. 

Let $x$ be any point on ${\rm Locus}(\cM)$. By Proposition~\ref{prop:Ion:Wis:2}, we have  
$$\dim {\rm Locus}(\cM_x) \geq \deg_{(-K_X)}\cM+\codim_X{\rm Locus}(\cM) -1\geq  1+\codim_X{\rm Locus}(\cM).. 
$$
Thus, we see that $\cM$ is an unsplit covering family. Denote by $\psi: X \dashrightarrow Z$ the RCC-fibration with respect to $\cM$. By the same argument as in the above claim that $\dim {\rm Locus}(\cM_x)=1$ for any $x \in {\rm Locus}(\cM)$, we may show that $\dim {\rm ChLocus}_m(\cM)_x=1$ for any $x \in X$. Thus \cite[Proposition~1]{BCD07} tells us that $\psi$ is a morphism. This yields that $\psi$ is the contraction associated to the ray $\R_{\geq 0}[\cM]$. Since $\dim {\rm ChLocus}_m(\cM)_x=1$ for any $x \in X$, we see that $\psi$ is a conic bundle; moreover, any fiber of $\psi$ is numerically equivalent, and any $\cM$-curve is a section of $\varphi$. This concludes that $\psi$ is a $\P^1$-bundle.  By construction, we see that $\varphi\times \psi: X \to \P^1\times Z$ is bijective; thus, the Zariski main theorem concludes that $X \cong \P^1 \times Z$.  
\end{proof}

\if0
\begin{theorem}\label{them:smooth:contraction} \rm Let $X$ be a smooth Fano variety of dimension $n \geq 4$ and $\tau_X\geq n-2$. Let $\varphi: X \to Y$ be a contraction of an extremal ray $R$. If $\varphi$ is of fiber type, then $\varphi$ is one of the following:
 \begin{enumerate}
 \item a smooth morphism;
 \item a $\P^{n-3}$-fibration which is not equidimensional;
 \item an equidimensional $Q^{n-2}$-fibration.
 \end{enumerate} 
 Moreover, any fiber of $\varphi$ has codimension at least $2$.
\end{theorem}
\fi

\begin{proof}[Proof of Theorem~\ref{them:1}] Let $\varphi: X\to Y$ be a contraction of an extremal ray $R$ as in Theorem~\ref{them:1}. If $\dim Y =1$, then it follows from Proposition~\ref{prop:charact:P1:product} that $\varphi$ is a smooth morphism; { since $Y$ is isomorphic to $\P^1$, Proposition~\ref{prop:charact:P1:product} implies that $X$ is isomorphic to a product of $\P^1$ and a variety.} Thus we assume that $\dim Y \geq 2$. General fibers of $\varphi$ are smooth Fano varieties, so there exists a dominating family $\cM\subset \rat^n(X)$ such that any $\cM$-curve is contracted by $\varphi$; replacing if necessary, we may assume that the anticanonical degree of $\cM$ is minimal among such families; then $\cM$ is locally unsplit. We have $\deg_{(-K_X)}\cM \leq n+1$ by Mori's bend and break. If $\deg_{(-K_X)}\cM \geq  n$, then Proposition~\ref{prop:Ion:Wis:2} yields that $\dim {\rm Locus}(\cM_x) \geq n-1$ for a general point $x\in X$. This contradicts our assumption that $\dim Y \geq 2$. Thus, we obtain the following inequality:
$$n-1\geq \deg_{(-K_X)}\cM\geq \ell(R). 
$$
Then we see that $\cM$ is unsplit; in fact, if it is not, by Lemma~\ref{lem:degeneration:curves} there exists a degeneration of $\cM$-curves $Z=\sum_{i=1}^sa_i Z_i$ such that $\sum_{i=1}^sa_i>1$. By the minimality of $\deg_{(-K_X)}\cM$, each $Z_i$ is not free; thus by our assumption that $\tau_X\geq n-2$,  we have $$n-1 \geq \deg_{(-K_X)}\cM=-K_X\cdot Z\geq 2(n-2).$$ This is a contradiction.

Assume that any minimal extremal rational curve of the ray $R$ is free. Then $\cM$ consists of these minimal extremal rational curves, and all $\cM$-curves are free; then Theorem~\ref{them:sm:quot} implies that $\varphi: X\to Y$ is a smooth morphism. So, assume that there exists a minimal extremal rational curve of the ray $R$, which is not free. Then, by our assumption, we have $\ell(R)\geq n-2$, and we obtain the inequality
$$
n-1\geq \deg_{(-K_X)}\cM\geq \ell(R)\geq n-2.
$$
For any $x \in X$, again by Proposition~\ref{prop:Ion:Wis:2}, we have 
$$
\dim {\rm Locus}(\cM_x)\geq \deg_{(-K_X)}\cM-1\geq n-3.
$$
By Theorem~\ref{them:unsplit:mor}, $\R_{\geq 0}[\cM]$ is an extremal ray, and the associated contraction is a geometric quotient $\varphi: X\to Y$ for $\cM$. Let us denote by $B$ the jumping locus for fiber-dimension of $\varphi$:
$$B:=\{x \in X\mid \dim \varphi^{-1}(\varphi(x))> \dim X-\dim Y\}.
$$
By \cite[Proposition~1]{BCD07}, we obtain $\dim B\leq n-2$.
Then we claim the following:
\begin{claim}\label{cl:length} $\deg_{(-K_X)}\cM= \ell(R)$.
\end{claim}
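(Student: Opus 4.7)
The plan is to argue by contradiction: suppose $\deg_{(-K_X)}\cM > \ell(R)$, and pick a minimal extremal rational curve $C_0$ of $R$. If $C_0$ is free, it deforms in a dominating family $\cN \subset \rat^n(X)$ whose curves all lie in $R$ and are hence contracted by $\varphi$, with $\deg_{(-K_X)}\cN = \ell(R) < \deg_{(-K_X)}\cM$, contradicting the minimality of $\deg_{(-K_X)}\cM$ among dominating families contracted by $\varphi$. Thus $C_0$ must be non-free, so $\ell(R) = -K_X \cdot C_0 \geq \tau_X \geq n-2$; combined with the previously established bound $\deg_{(-K_X)}\cM \leq n-1$, this forces $\deg_{(-K_X)}\cM = n-1$ and $\ell(R) = n-2$. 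In this regime the preceding analysis, together with $\dim B \leq n-2$, shows that $\dim Y = 2$ and that $\varphi$ is equidimensional of relative dimension $n-2$.

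Next I would study the family $\cM_0 \subset \rat^n(X)$ containing $[C_0]$, of anticanonical degree $n-2$. By the minimality of $\cM$, $\cM_0$ is not dominating. It is also unsplit: any degeneration $\sum a_i Z_i$ provided by Lemma~\ref{lem:degeneration:curves} would consist of rational curves $Z_i$ with $[Z_i]\in R$, so $-K_X \cdot Z_i \geq \ell(R) = n-2$, whence $n-2 = \sum a_i (-K_X) \cdot Z_i \geq 2(n-2)$, impossible for $n\geq 4$. Applying Proposition~\ref{prop:Ion:Wis:2} to $\cM_0$, and using that ${\rm Locus}((\cM_0)_x)$ must sit inside the $(n-2)$-dimensional fiber $\varphi^{-1}(\varphi(x))$, yields $\codim_X {\rm Locus}(\cM_0) \leq 1$; combined with $\cM_0$ being non-covering, ${\rm Locus}(\cM_0)$ is a prime divisor, and through each of its points the full fiber of $\varphi$ is swept out by $\cM_0$-curves.

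To close the argument I would examine a general fiber $F$ of $\varphi$: it is a smooth Fano variety of dimension $n-2$ with $\iota_F \geq n-2$, so by Theorem~\ref{them:fano:collect} either $F \cong \P^{n-2}$ or $F \cong Q^{n-2}$. Since $F$ is covered by $\cM$-curves of anticanonical degree $n-1$, and rational curves on $Q^{n-2}$ have $-K$-degree in $(n-2)\Z$ (hence never equal to $n-1$ for $n\geq 4$), we must have $F \cong \P^{n-2}$. A fiber contained in ${\rm Locus}(\cM_0)$ is covered by rational curves of anticanonical degree $n-2$, and by the same Fano dichotomy any smooth such fiber would have to be either $\P^{n-2}$ (which carries no degree $n-2$ rational curve) or $Q^{n-2}$ (which carries no degree $n-1$ $\cM$-curve), both impossible. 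Thus every fiber over $\varphi({\rm Locus}(\cM_0))$ must be singular. The main obstacle is then ruling out such singular fibers: my approach is to restrict $\varphi$ along a general smooth curve $B \subset Y$ meeting $\varphi({\rm Locus}(\cM_0))$ transversally at a smooth point of $Y$, exploit miracle flatness (valid since $Y$ is normal of dimension $2$, hence Cohen-Macaulay at its regular locus) to obtain a flat one-parameter degeneration of $\P^{n-2}$, and then conclude via the rigidity of $\P^{n-2}$-fibrations (cf.\ \cite[V.3.2]{Kb}) that the special fiber remains isomorphic to $\P^{n-2}$, contradicting the presence of $\cM_0$-curves of degree $n-2$.
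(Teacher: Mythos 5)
Your proposal tracks the paper's argument faithfully through the reduction: forcing $\deg_{(-K_X)}\cM=n-1$, $\ell(R)=n-2$, $\dim Y=2$ and equidimensionality; introducing the unsplit, non-dominating family $\cM_0$ through a non-free minimal extremal rational curve; showing ${\rm Locus}(\cM_0)$ is a divisor whose $\cM_0$-curves sweep out entire fibers; and identifying the general fiber with $\P^{n-2}$ (the paper gets this from \cite{CMSB}, you from the pseudoindex dichotomy of Theorem~\ref{them:fano:collect} — both are fine). The genuine gap is the last step. Having correctly concluded that every fiber over $\varphi({\rm Locus}(\cM_0))$ must be \emph{singular}, you restrict $\varphi$ to a general curve $B\subset Y$ and invoke a ``rigidity of $\P^{n-2}$-fibrations'' to conclude that the special fiber of the resulting flat family is still $\P^{n-2}$. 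No such rigidity holds for merely flat families: the theorems of Siu and Hwang--Mok that the paper cites say that a limit of $\P^{n-2}$ which is itself a \emph{smooth} variety must be $\P^{n-2}$, and you have just shown that the fibers in question are not smooth. A flat family over a smooth curve with smooth total space and general fiber $\P^{d}$ can have a reducible special fiber (blow up a point in the central fiber of a trivial family $\P^{d}\times C$); indeed \cite[Theorem~4]{AR14}, which the paper uses at exactly this juncture, asserts that the general fiber over the non-smooth locus \emph{is} reducible. So your final step assumes away precisely the difficulty the claim is about.

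What the paper does instead is analyze an irreducible component $D_1$ of a reducible fiber $F_0$: it shows ${\rm Locus}((\cM_0)_x)=D_1$ for $x$ in a dense open subset of $D_1$, extracts from this a family of degree-$(n-2)$ rational curves on $D_1$ of dimension at least $2n-6$, applies \cite[Theorem~2.1]{HNov13} to identify the normalization $\nu:\tilde{D_1}\to D_1$ with $\P^{n-2}$, compares $[F_0]\cdot(-K_X)^{n-2}=(n-1)^{n-2}$ with $\sum a_i\,[D_i]\cdot(-K_X)^{n-2}$ to pin down $\nu^{\ast}(-K_X)\cong\cO_{\P^{n-2}}(n-2)$, and finally derives the contradiction that an $\cM$-curve contained in $D_1$ would force $n-2$ to divide $n-1$. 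Some analysis of this kind of the reducible degenerate fibers is unavoidable, and it is absent from your sketch.
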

\begin{proof}[Proof of the Claim~\ref{cl:length}] Assume that $\deg_{(-K_X)}\cM> \ell(R)$. Then $\deg_{(-K_X)}\cM=n-1$ and $\ell(R)=n-2$. Moreover, it turns out that, for any point $x\in X$, 
$$
n-2\geq \dim X-\dim Y\geq \dim {\rm Locus}(\cM_x)\geq \deg_{(-K_X)}\cM-1=n-2.
$$
This yields $(\dim Y, \dim {\rm Locus}(\cM_x))=(2, n-2)$.
Since $\dim {\rm Locus}(\cM_x)=n-2=\dim X-\dim Y$, \cite{CMSB} implies that general fibers of $\varphi$ are isomorphic to $\P^{n-2}$. Moreover $\varphi$ is equidimensional, because $\dim B \leq n-2$. It is flat since $\varphi$ is an equidimensional morphism between smooth varieties. Following the idea of the proof of \cite[Proposition~3.1]{HNov13}, we shall prove that $\varphi$ is a projective bundle; then we get a contradiction to our assumption $\deg_{(-K_X)}\cM> \ell(R)$. 

To prove $\varphi$ is a projective bundle, assume the contrary. Then $\varphi$ is not smooth. We denote by $S_{\varphi}$ the locus of points of $Y$ over which $\varphi$ is not smooth and by $R_{\varphi}$ the locus of points of $Y$ over which the fibers of $\varphi$ are reducible. According to the rigidity of the projective space \cite[Main Theorem]{Siu89} (see also \cite[Theorem~1']{HM98}), we remark that $\varphi|_{X\setminus \varphi^{-1}(S_{\varphi})}: X\setminus \varphi^{-1}(S_{\varphi})\to Y\setminus S_{\varphi}$ is a $\P^{n-2}$-bundle. It follows from \cite[Theorem~4]{AR14} that $S_{\varphi}$ is of pure codimension one and $S_{\varphi}=\overline{R_{\varphi}}$. Since $\dim Y=2$, the locus $S_{\varphi}$ is a union of curves.

Let $C_1$ be a minimal extremal rational curve of the ray $R$.
Taking a family of rational curves $\cM'\subset \rat^n(X)$ containing $[C_1]$, the minimality of the anticanonical degree $-K_X\cdot C_1$ and Lemma~\ref{lem:degeneration:curves} imply that $\cM'$ is unsplit. By Proposition~\ref{prop:Ion:Wis:2}, for any $x \in {\rm Locus}(\cM')$, we obtain the inequality
\begin{align*} 
2n-2=\dim X+\deg_{(-K_X)}\cM'\leq \dim {\rm Locus}(\cM')+\dim {\rm Locus}(\cM_x')+1.\tag{1}
\end{align*}
Since any $\cM'$-curve is not free, ${\rm Locus}(\cM')$ does not coincide with the whole space $X$. Moreover, for any $x \in {\rm Locus}(\cM')$, ${\rm Locus}(\cM_x')$ is contained in the fiber $\varphi^{-1}(\varphi(x))$. Hence the right hand side of the inequality $\rm (1)$ is at most 
$$(n-1)+(n-2)+1=2n-2.$$ As a consequence, we see that 
$$
\dim {\rm Locus}(\cM')=n-1\,\,\,\,\,\mbox{and}\,\,\,\,\,\dim {\rm Locus}(\cM_x')=n-2.
$$
Since $\cM'$ is unsplit, Mori's bend and break tells us that the evaluation map from the universal family of $\cM_x'$ onto ${\rm Locus}(\cM_x')$ is generically finite; this yields that $\dim \cM_x'=n-3$ for any $x \in {\rm Locus}(\cM')$. Consider the universal family $p: \cU'\to \cM'$ associated to $\cM'$ and its evaluation morphism $q: \cU'\to {\rm Locus}(\cM')$; by dimension count, we obtain
$$
\dim \cM'= \dim \cU'-1=\dim \cM_x' +\dim {\rm Locus}(\cM')-1=(n-3)+(n-1)-1= 2n-5.
$$
We see that ${\rm Locus}(\cM')$ is contained in $\varphi^{-1}(S_{\varphi})$; since we have 
$$\dim {\rm Locus}(\cM')=\dim \varphi^{-1}(S_{\varphi})=n-1,$$ 
${\rm Locus}(\cM')$ is an irreducible component of $\varphi^{-1}(S_{\varphi})$. Moreover $\varphi({\rm Locus}(\cM'))$ coincides with an irreducible component of $S_{\varphi}= \overline{R_{\varphi}}$; thus we may find a point $x_0\in {\rm Locus}(\cM')$ such that $\varphi(x_0)$ is in $R_{\varphi}$. Set $F_0:=\varphi^{-1}(\varphi(x_0))$. Denoting by $D_1, \ldots, D_m~(m>1)$ irreducible components of the fiber $(F_0)_{\rm red}$ with reduced structure, there exist positive integers $a_i$'s such that the cycle $[F_0]$ is algebraically equivalent to $\sum_{i=1}^ma_i[D_i]$ with $\sum_{i=1}^ma_i>1$. Set $D_1^o:=\left(D_1\setminus \bigcup_{i\neq 1}D_i   \right)$, which is a nonempty open subset of $D_1$. Without loss of generality, we may assume that $D_1 \subset {\rm Locus}(\cM')$. 
For any point $x\in D_1^o$, any $\cM_x'$-curve is contained in $F_0$ and passes through the point $x \in X$; this implies that ${\rm Locus}(\cM_x')$ is contained in $D_1$; since ${\rm Locus}(\cM_x')$ and $D_1$ have the same dimension and $D_1$ is irreducible, we see that ${\rm Locus}(\cM_x')=D_1$. By a dimension count, we obtain
$$
\dim p(q^{-1}(D_1^o))= \dim q^{-1}(D_1^o)-1=\dim \cM_x' +\dim D_1^o-1= 2n-6.
$$
This implies that there exists a family of rational curves $\cN\subset \rat^n(D_1)$ such that $\dim \cN$ is at least $2n-6$ and $\deg_{(-K_X|_{D_1})}\cN=\ell(R)$. Denoting by $\nu: \tilde{D_1}\to D_1$ the normalization of $D_1$, our family $\cN$ lifts to $\tilde{D_1}$. We denote by $\tilde{\cN}$ the lifting of $\cN$ to $\tilde{D_1}$. Then by applying \cite[Theorem~2.1]{HNov13}, we see that $D_1$ is isomorphic to $\P^{n-2}$ and $\tilde{\cN}$ is the family of lines on $\P^{n-2}$.

Let $F$ be a general fiber of $\varphi$. By \cite[I,~Proposition~3.12]{Kb}, we obtain 
\begin{eqnarray*}
(n-1)^{n-2} &=& (-K_F)^{n-2}=[F]\cdot (-K_X)^{n-2}=[F_0]\cdot(-K_X)^{n-2} \\
 &=& \sum_{i=1}^m a_i\left([D_i]\cdot (-K_X)^{n-2}\right)>[D_1]\cdot (-K_X)^{n-2}=\left(\nu^{\ast}(-K_X)\right)^{n-2}.
\end{eqnarray*}
For a positive integer $b$ such that $\nu^{\ast}(-K_X)=\cO_{\P^{n-2}}(b)$, the above inequality tells us that $b^{n-2}<(n-1)^{n-2}$; thus we see that $b<n-1$. On the other hand, for a line $f \subset \P^{n-2}$,  we have 
$$
n-2=\ell(R)\leq (-K_X)\cdot \nu_{\ast}(f) =\nu^{\ast}(-K_X)\cdot f =b<n-1.
$$
Hence, we obtain $b=n-2$. 

Recall that $\cM$ is an unsplit covering family; for any point $x \in D_1^o$ there exists an $\cM_x$-curve $C$; we see that $C$ is contained in $D_1$; it lifts to a curve $\tilde{C}$ in $\tilde{D_1}\cong \P^{n-2}$. Then we obtain the following:
$$
(n-2)\cO_{\P^{n-2}}(1)\cdot \tilde{C}=\nu^{\ast}(-K_X)\cdot \tilde{C}=(-K_X)\cdot {C}=n-1.
$$
This is a contradiction, because $\cO_{\P^{n-2}}(1)\cdot \tilde{C}$ is an integer and $n\geq 4$.
\end{proof}

Let $F$ be a smooth general fiber of $\varphi$. As we proved, we have the following inequalities:
\begin{eqnarray*}
n-2 \geq \dim F \geq n-3, \\
\quad n-1\geq \deg_{(-K_X)}\cM=\ell(R)\geq n-2.
\end{eqnarray*}
By \cite{CMSB} and \cite{DH17} (see also \cite{Mi}), $F$ is isomorphic to $\P^{n-2}, Q^{n-2}$ or $\P^{n-3}$. Assume that $F$ is isomorphic to $\P^{n-2}$ or $Q^{n-2}$; then $\varphi$ is equidimensional, because $\dim B \leq n-2$. Thus if $F$ is isomorphic to $\P^{n-2}$, \cite{HNov13} yields that $\varphi$ is a $\P^{n-2}$-bundle; this contradicts to our assumption that $\varphi$ is not smooth. Consequently, we see that $\varphi$ is an equidimensional $Q^{n-2}$-fibration. Finally, we assume that $F$ is isomorphic to $\P^{n-3}$. If $\varphi$ is equidimensional, then again by \cite{HNov13} $\varphi$ should be a $\P^{n-3}$-bundle; it is a contradiction because  $\varphi$ is not smooth. As a consequence, 
$\varphi$ is a $\P^{n-3}$-fibration which is not equidimensional. 
\end{proof} 

\if0
By the same argument as in Theorem~\ref{them:smooth:contraction}, we obtain the following:

 \begin{theorem}\label{them:smooth:contraction:4fold} \rm Let $X$ be a smooth Fano fourfold of $\iota_X\geq 2$ and $\rho_X \geq 2$. Let $\varphi: X \to Y$ be an elementary contraction of fiber type, then $\varphi$ is one of the following:
 \begin{enumerate}
 \item a smooth morphism;
 \item an inequidimensional $\P^1$-fibration;
 \item an equidimensional $Q^2$-fibration.
 \end{enumerate} 
 Moreover, any fiber of $\varphi$ has codimension at least $2$.
\end{theorem}
\fi

\section{Examples} Let $X$ be a variety as in Theorem~\ref{them:2} or Theorem~\ref{them:3}. This section aims to compute invariants $\tau_X, \iota_X, \ell_X, i_X$ and $\nu_X$ of such $X$.


\begin{theorem}\label{them:bir:contr:n-1} \rm  Let $X$ be a smooth Fano variety of dimension $n\geq 4$ and assume that $X$ admits a birational elementary contraction. Then the following are equivalent to each other:
\begin{enumerate} 
\item $X$ is isomorphic to $\P(\cO_{\P^{n-1}}\oplus \cO_{\P^{n-1}}(1))$;
\item $\nu_X=2$;
\item $\tau_X\geq n-1$;
\item $\tau_X=n-1$.
\end{enumerate}
\end{theorem}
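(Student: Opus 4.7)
The plan is to run the equivalence around the cycle $(1) \Rightarrow (2) \Rightarrow (3) \Rightarrow (1)$, together with the two chains $(1) \Rightarrow (4) \Rightarrow (3)$. Of these, $(4) \Rightarrow (3)$ is tautological, and $(2) \Rightarrow (3)$ is immediate from Lemma~\ref{lem:nonfree} applied with $r = 2$, which yields $\tau_X \geq n - 2 + 1 = n - 1$. Thus the remaining tasks are (a) compute the relevant invariants on $\P(\cO_{\P^{n-1}} \oplus \cO_{\P^{n-1}}(1))$ to obtain $(1) \Rightarrow (2)$ and $(1) \Rightarrow (4)$, and (b) close the cycle with $(3) \Rightarrow (1)$.

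For $(1) \Rightarrow (2)$ and $(1) \Rightarrow (4)$, I would realize $X = \P(\cO_{\P^{n-1}} \oplus \cO_{\P^{n-1}}(1))$ as the blow-up of $\P^n$ at a point, with exceptional divisor $E \cong \P^{n-1}$ and $N_{E/X} \cong \cO_E(-1)$. The surjection $T_X|_E \to N_{E/X}$ produces a negative quotient on any line in $E$, so $T_X$ is not nef and $\nu_X \geq 2$. Nefness of $\wedge^2 T_X$ for this particular blow-up is already known (for instance by \cite[Theorem~1.7]{Wat21}), giving $\nu_X = 2$. For $\tau_X$, the identity $-K_X = (n+1)H - (n-1)E$ together with $H\cdot\ell = 0$ and $E\cdot\ell = -1$ for a line $\ell \subset E$ gives $-K_X \cdot \ell = n - 1$; since $\ell$ is non-free (being contained in $E$), this shows $\tau_X \leq n-1$, and combined with the implication $(2)\Rightarrow(3)$ already available we conclude $\tau_X = n - 1$.

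The main work lies in $(3) \Rightarrow (1)$: assuming $\tau_X \geq n-1 \geq n-2$, Theorem~\ref{them:2} restricts $X$ to one of the seven listed families, so it suffices to exhibit a non-free rational curve $C$ with $-K_X \cdot C \leq n - 2$ in each of cases (2)--(7). In cases (2) and (3) the negative section of the $\P^1$-bundle is isomorphic to $Q^{n-1}$ or $\P^{n-1}$, has a negative summand in its normal bundle, and a line $\ell$ in this section satisfies $-K_X \cdot \ell = n - 2$ by a direct projective bundle computation via $-K_X = -\pi^*K_Y + 2\xi - \pi^*c_1(E)$. In cases (4)--(6) the exceptional divisor $E$ is a projective bundle over the blown-up center of codimension $c \geq 2$, and a line $\ell$ in a fiber of $E$ satisfies $-K_X \cdot \ell = c - 1 \leq n - 2$ via $-K_X = \epsilon^*(-K_Y) - (c-1)E$ and $E \cdot \ell = -1$. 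In case (7), pull back a line in the negative section of the second factor $\P(\cO_{\P^{n-2}} \oplus \cO_{\P^{n-2}}(1))$ through $\{\mathrm{pt}\} \times -$ to produce a non-free rational curve of anticanonical degree $n - 2$ on $X$.

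The principal obstacle is the appeal to $\wedge^2 T_X$ being nef for the blow-up of $\P^n$ at a point; rather than reprove this I would cite the classification of Fano varieties with nef second exterior power of the tangent bundle in \cite[Theorem~1.7]{Wat21}. The computations in cases (2)--(7) are routine but must be carried out case by case using the standard formulas for the canonical class on projective bundles and on blow-ups, and form part of the systematic invariant computations advertised in Section~4.
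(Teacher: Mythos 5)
Your proposal is correct, and the easy implications ((1)$\Rightarrow$(2), (2)$\Rightarrow$(3) via Lemma~\ref{lem:nonfree} with $r=2$, and (1)$\Rightarrow$(4) via the line in the exceptional divisor of degree $n-1$) coincide with the paper's argument; your use of the negative quotient $T_X|_E\to N_{E/X}\cong\cO_E(-1)$ to see that $T_X$ is not nef is a slightly more elementary substitute for the paper's appeal to Theorem~\ref{them:sm:quot}, and citing \cite[Theorem~1.7]{Wat21} for the nefness of $\wedge^2T_X$ replaces the paper's explicit exact sequence $0\to \cO_X(2)\otimes\pi^{\ast}(T_{\P^{n-1}}(-1))\to\wedge^2T_X\to\wedge^2\pi^{\ast}T_{\P^{n-1}}\to 0$. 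Where you genuinely diverge is the key implication (3)$\Rightarrow$(1): the paper simply outsources it ("the same argument as in \cite{Yas14} or \cite[Theorem~1.7]{Wat21}"), whereas you derive it internally from Theorem~\ref{them:2} by exhibiting, in each of cases (2)--(7), a non-free rational curve of anticanonical degree exactly $n-2$; your degree computations (lines in the negative sections for (2)--(3), lines in fibers of the exceptional divisors for (4)--(6), and the pullback through a point for (7)) are all correct. This buys a more self-contained paper at the cost of a forward reference: Theorem~\ref{them:2} is proved in Section~5, and its proof formally routes through Example~\ref{eg:Q:blup:l} and Example~\ref{eg:Q:blup:conic}, which invoke Lemma~\ref{eg:k:tau}, which in turn cites the very theorem you are proving. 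On inspection the circularity is not essential---Lemma~\ref{eg:k:tau} uses Theorem~\ref{them:bir:contr:n-1} only for the upper bound $\tau_X\leq n-2$ in those examples, which plays no role in the classification logic of Theorem~\ref{them:2}---but if you adopt this route you should say so explicitly, or reorder the material, so that the reader can verify the argument is not circular.
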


\begin{proof} Remark that $\P(\cO_{\P^{n-1}}\oplus \cO_{\P^{n-1}}(1))$ is the blow up of $\P^n$ at a point. In the following, we denote by $E$ its exceptional divisor. 

$\rm (i)\Rightarrow (ii)$ Assume that $X$ is isomorphic to $\P(\cO_{\P^{n-1}}\oplus \cO_{\P^{n-1}}(1))$. We denote by $\cO_X(1)$ the tautological line bundle of $\P(\cO_{\P^{n-1}}\oplus \cO_{\P^{n-1}}(1))$ and by $\pi: X\to \P^{n-1}$ the natural projection. Then we have the following exact sequence:
$$0\to \cO_X(2)\otimes \pi^{\ast}(T_{\P^{n-1} }(-1))\to \wedge^2 T_X \to \wedge^2\pi^{\ast}T_{\P^{n-1}}\to 0.$$
Since the bundles $\cO_X(2), \pi^{\ast}(T_{\P^{n-1} }(-1))$ and $\pi^{\ast}T_{\P^{n-1}}$ are nef, so is $\wedge^2 T_X$. Moreover, Theorem~\ref{them:sm:quot} tells us that $T_X$ is not nef.

$\rm (ii)\Rightarrow (iii)$ This follows from Lemma~\ref{lem:nonfree}. 

$\rm (iii)\Rightarrow (i)$, $\rm (iv)\Rightarrow (i)$  These follow from the same argument as in \cite{Yas14} or \cite[Theorem~1.7]{Wat21}.

$\rm (i)\Rightarrow (iv)$ If $X$ is $\P(\cO_{\P^{n-1}}\oplus \cO_{\P^{n-1}}(1))$, the above arguments imply that $\tau_X\geq n-1$. For a line $\ell$ in the exceptional divisor $E\cong\P^{n-1}$ of $X$, we have $-K_X\cdot \ell=n-1$. Since $\ell$ is not free, our assertion holds.
\end{proof}

\begin{lemma}\label{eg:k:tau}\rm Let $X$ be a variety other than (i) among the varieties in Theorem~\ref{them:2}. If $\wedge^3T_X$ is nef, then $\nu_X=3$ and $\tau_X=n-2$.
\end{lemma}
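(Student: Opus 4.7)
My plan is to split the two conclusions: $\nu_X = 3$ follows formally from Theorem~\ref{them:bir:contr:n-1}, while $\tau_X = n - 2$ requires producing an explicit non-free rational curve of anticanonical degree $n-2$ on each of (ii)--(vii). For the nef order, the hypothesis that $\wedge^3 T_X$ is nef gives $\nu_X \leq 3$ by definition; since each variety in (ii)--(vii) admits a birational elementary contraction and is not of type (i), Theorem~\ref{them:bir:contr:n-1} forbids $\nu_X = 2$, so $\nu_X = 3$. The matching lower bound $\tau_X \geq n - 2$ is Lemma~\ref{lem:nonfree} applied with $r = 3$.

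The substantive work is the upper bound $\tau_X \leq n - 2$, which I plan to obtain case by case through two parallel templates. In the blow-up cases (iv)--(vi), $X$ is a blow-up of $\P^n$ or $Q^4$ along a smooth subvariety $Z$ of codimension $c$ (with $c = n-1$ in (iv) and $c = 3$, $n = 4$ in (v), (vi)); taking $\ell$ to be a line in a fiber $F \cong \P^{c-1}$ of the exceptional divisor $E \to Z$, the formula $K_X = \pi^*K + (c-1)E$ with $E \cdot \ell = -1$ yields $-K_X \cdot \ell = c - 1 = n - 2$, and the exact sequence $0 \to T_F \to T_X|_F \to N_{F/X} \to 0$ combined with $N_{E/X}|_F = \cO_F(-1)$ shows $T_X|_\ell$ admits $\cO_\ell(-1)$ as a quotient, so $\ell$ is non-free. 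In the projective bundle cases (ii), (iii), $X = \P(\cO_Y \oplus L)$ over $Y = Q^{n-1}$ or $\P^{n-1}$; take $\ell$ a line in the contractible section $\sigma$, namely the one coming from the quotient $\cO_Y \oplus L \to \cO_Y$ (so $H|_\sigma = \cO_Y$ and $N_{\sigma/X} = L^\vee$). Then $-K_X = 2H + \pi^*(-K_Y - L)$ together with the coincidence $-K_Y - L = (n-2) H_Y$ in both cases gives $-K_X \cdot \ell = n-2$, and $N_{\sigma/X}|_\ell = L^\vee|_\ell$ is again a negative line-bundle quotient of $T_X|_\ell$. Case (vii) is $\P^1 \times Y$ with $Y = \P(\cO_{\P^{n-2}} \oplus \cO_{\P^{n-2}}(1))$, and the non-free curve $\ell \subset Y$ produced by the preceding template yields $\{\mathrm{pt}\} \times \ell \subset X$ of anticanonical degree $n-2$, non-free because $T_X|_{\{\mathrm{pt}\} \times \ell} \cong \cO_\ell \oplus T_Y|_\ell$ inherits a non-nef summand.

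The main obstacle I anticipate is clerical rather than conceptual: the Grothendieck-versus-classical convention for $\P(\cE)$ must be applied consistently to identify the correct contractible section in (ii), (iii), (vii), namely the section with negative normal bundle (equivalently, with $H|_\sigma$ equal to the trivial quotient $\cO_Y$), since choosing the opposite section produces curves of much higher anticanonical degree and thus fails to witness $\tau_X \leq n-2$. Once that normalization is fixed, all six cases proceed uniformly along the two template computations above, and the minor arithmetic identity $-K_Y - L = (n-2) H_Y$ (needed for (ii), (iii), (vii)) reduces in each instance to $(n-1) - 1 = n-2$ for $Q^{n-1}$ or $n - 2 = n - 2$ for $\P^{n-1}$, which is routine.
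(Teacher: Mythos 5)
Your proposal is correct, but it takes a genuinely different route from the paper for the key upper bound $\tau_X\leq n-2$. The paper's proof is a one-liner: it combines the hypothesis $\wedge^3T_X$ nef (giving $\nu_X\leq 3$) with Theorem~\ref{them:bir:contr:n-1}, whose equivalences $(\mathrm{i})\Leftrightarrow(\mathrm{ii})\Leftrightarrow(\mathrm{iii})$ simultaneously rule out $\nu_X=2$ \emph{and} force $\tau_X<n-1$ for any $X$ not of type (i) admitting a birational elementary contraction; together with Lemma~\ref{lem:nonfree} this pins down $\nu_X=3$ and $\tau_X=n-2$ with no case analysis. You instead obtain $\tau_X\leq n-2$ by exhibiting, on each of (ii)--(vii), an explicit non-free rational curve of anticanonical degree $n-2$ (lines in fibers of the exceptional divisor for the blow-ups, lines in the negative section for the projective bundles, and the product construction for (vii)); your computations, including the identification of the section with normal bundle $L^\vee$ via the quotient $\cO_Y\oplus L\to\cO_Y$ and the identity $-K_Y-L=(n-2)H_Y$, all check out. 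What your approach buys is independence from the implication $(\mathrm{iii})\Rightarrow(\mathrm{i})$ of Theorem~\ref{them:bir:contr:n-1} (which the paper only proves by reference to Yasutake's and the author's earlier arguments), at the cost of length; it also matches what the paper actually does curve-by-curve in the Examples of Section~4. One small point to tidy: Theorem~\ref{them:bir:contr:n-1} only excludes $\nu_X=2$, so to conclude $\nu_X=3$ you must also exclude $\nu_X=1$; this is immediate from your own construction, since the existence of a non-free rational curve already shows $T_X$ is not nef (alternatively, a nef tangent bundle would force every elementary contraction to be smooth by Theorem~\ref{them:sm:quot}, contradicting the birational one), but it deserves a sentence.
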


\begin{proof} Combining $\nu_X=3$ with Lemma~\ref{lem:nonfree} and Theorem~\ref{them:bir:contr:n-1}, we see that $\tau_X=n-2$.
\end{proof}

\begin{example}\label{eg:bir:1} Let $X=\P(\cO_{\P^{n-1}}\oplus \cO_{\P^{n-1}}(1))$ for $n\geq 4$. We denote by $\cO_X(1)$ the tautological line bundle of $X$ and by $\pi: X\to \P^{n-1}$ the natural projection. Then $X$ is the blow-up of $\P^n$ at a point. By Proposition~\ref{prop:div:iota}, $\iota_X=\ell_X=2$. We have $\cO_X(-K_X)= \cO_X(2)\otimes\pi^{\ast}\cO_{\P^{n-1}}(n-1)$. Since ${\rm Pic}(X)\cong \Z[\cO_X(\xi)]\oplus \Z[\pi^{\ast}\cO_{\P^{n-1}}(1)]$, we have
\begin{equation}
  i_X=
  \begin{cases}
    1 & \text{if} ~n ~\text{is~ even}, \\ \nonumber
    2 & \text{if} ~n ~\text{is ~odd}.
  \end{cases}
\end{equation}
By Theorem~\ref{them:bir:contr:n-1}, we see that $\nu_X=2$ and $\tau_X= n-1$.
\end{example}

\begin{example}\label{eg:Q:blup} Let $X=\P(\cO_{Q^{n-1}}\oplus \cO_{Q^{n-1}}(1))$ for $n\geq 4$. We denote by $\cO_X(1)$ the tautological line bundle of $X=\P(\cO_{Q^{n-1}}\oplus \cO_{Q^{n-1}}(1))$ and by $\pi: X\to Q^{n-1}$ the natural projection. 
We have the following exact sequence
\begin{equation}
0\to T_{\pi}\to T_X \to \pi^{\ast}T_{Q^{n-1}} \to 0.  \nonumber
\end{equation}
By \cite[II~ Exercises~5.16]{Har}, the above sequence implies the following exact sequence
\begin{equation}\label{eq:3}
0\to T_{\pi}\otimes\wedge^2\pi^{\ast}T_{Q^{n-1}}\to \wedge^3 T_X \to \wedge^3\pi^{\ast}T_{Q^{n-1}} \to 0.
\end{equation}
On the other hand, we have 
$$T_{\pi}\otimes\wedge^2\pi^{\ast}T_{Q^{n-1}}\cong \cO_X(2)\otimes \pi^{\ast}\left\{\left(\wedge^2 T_{Q^{n-1}}\right)\otimes \cO_{Q^{n-1}}(-1)  \right\}.$$
From the standard exact sequence $0\to T_{Q^{n-1}}\to T_{\P^n}|_{Q^{n-1}}\to \cO_{Q^{n-1}}(2)\to 0$ and \cite[II~ Exercises~5.16]{Har}, we obtain a surjection 
$$
\wedge^3T_{\P^{n}}|_{Q^{n-1}} \to \left(\wedge^2 T_{Q^{n-1}} \right) \otimes \cO_{Q^{n-1}}(2).
$$
Tensoring $\cO_{Q^{n-1}}(-3)$ with the above surjection, we see that $\left(\wedge^2 T_{Q^{n-1}} \right) \otimes \cO_{Q^{n-1}}(-1)$ is nef. Then (\ref{eq:3}) implies that $\wedge^3 T_X$ is nef. By Lemma~\ref{eg:k:tau}, $\nu_X=3$ and $\tau_X=n-2$.

Since $X$ is the blow up of the cone over $Q^{n-1}$ at the vertex, Proposition~\ref{prop:div:iota} implies $\iota_X=\ell_X=2$. Moreover we have $$\cO_X(-K_X)\cong \cO_X(2)\otimes\pi^{\ast}\cO_{Q^{n-1}}(n-2).$$ Since ${\rm Pic}(X)\cong \Z[\cO_X(1)]\oplus \Z[\pi^{\ast}\cO_{Q^{n-1}}(1)]$, we have
\begin{equation}
  i_X=
  \begin{cases}
    2 & \text{if} ~n ~\text{is~ even}, \\ \nonumber
    1 & \text{if} ~n ~\text{is ~odd}.
  \end{cases}
\end{equation}

\end{example}

\begin{example}\label{eg:P:blup} Let $X=\P(\cO_{\P^{n-1}}\oplus \cO_{\P^{n-1}}(2))$ for $n\geq 4$. Then by the same argument as in Example~\ref{eg:Q:blup} we see that $(\nu_X, \tau_X, \iota_X, \ell_X)=(3, n-2, 2, 2)$ and 
\begin{equation}
  i_X=
  \begin{cases}
    2 & \text{if} ~n ~\text{is~ even}, \\ \nonumber
    1 & \text{if} ~n ~\text{is ~odd}.
  \end{cases}
\end{equation}
\end{example}

\begin{example}\label{eg:P:blup:l} Let $X$ be the blow up of $\P^n$ along a line $\ell$, that is, $X=\P(\cO_{\P^{n-2}}^{\oplus 2}\oplus \cO_{\P^{n-2}}(1))$ for $n\geq 4$. We denote by $\cO_X(1)$ the tautological line bundle of $X=\P(\cO_{\P^{n-2}}^{\oplus 2}\oplus \cO_{\P^{n-2}}(1))$ and by $\pi: X\to \P^{n-2}$ the natural projection. We have the following exact sequence
\begin{equation}
0\to T_{\pi}\to T_X \to \pi^{\ast}T_{\P^{n-2}} \to 0.  \nonumber
\end{equation}By \cite[II~ Exercises~5.16]{Har}, there exists a vector bundle $F$ on $X$ fitting in the following exact sequences
\begin{equation}\label{eq:4}
0\to F\to \wedge^3 T_X \to \wedge^3\pi^{\ast}T_{\P^{n-2}} \to 0,
\end{equation}
\begin{equation}\label{eq:5}
0\to \cO_X(3)\otimes\pi^{\ast}\left(T_{\P^{n-2}}\right)\to F \to T_{\pi}\otimes \wedge^2\pi^{\ast}T_{\P^{n-2}} \to 0.
\end{equation}
On the other hand, tensoring the relative Euler sequence with $\wedge^2\pi^{\ast}T_{\P^{n-2}}$, we obtain a surjection 
$$
\cO_X(1)\otimes \pi^{\ast}\left(\left(\wedge^2T_{\P^{n-2}}\right)^{\oplus 2}\oplus \left(\wedge^2T_{\P^{n-2}}\right)(-1)\right)\to T_{\pi}\otimes \wedge^2 \pi^{\ast}T_{\P^{n-2}}. $$
 This implies that $T_{\pi}\otimes \wedge^2 \pi^{\ast}T_{\P^{n-2}}$ is nef. Since $\cO_X(3)\otimes\pi^{\ast}\left(T_{\P^{n-2}}(-1)\right)$ is also nef, (\ref{eq:5}) yields that $\wedge^3 T_X$ is nef. By Lemma~\ref{eg:k:tau}, $\nu_X=3$ and $\tau_X=n-2$.

From now on, we follow the computation of \cite[Proof of Theorem~1]{Tsu12}. Remark that $X$ is the blow-up of $\P^n$ along a line $\ell$, denoted by $\varphi: X\to \P^n$. Since $N_{\ell/\P^n}\cong \cO_{\P^1}(1)^{\oplus n-1}$, the exceptional divisor $E$ is isomorphic to $\P(N_{\ell/\P^n}^{\vee})\cong \P^1 \times \P^{n-2}$; moreover the restrictions $\pi|_E: E\to \P^{n-2}$ and $\varphi|_E: E\to \P^1$ give the two natural projections. Denoting by $e$ a line in a fiber $\P^{n-2}$ of $\varphi|_{E}: E\to \ell$ and by $f$ a line contained in a fiber $\P^{2}$ of $\pi$, the Kleiman-Mori cone $\overline{ NE}(X)$ is generated by these two lines:
$$
\overline{ NE}(X)=\R_{\geq 0}[e]+\R_{\geq 0}[f].
$$
Since $$\cO_X(-K_X)\cong\cO_X(3)\otimes\pi^{\ast}\cO_{\P^{n-2}}(n-2),$$ we have $-K_X\cdot f=3$ and $-K_X\cdot \ell=n-2$. Moreover we see that $i_X=3$ if $n\equiv 2~(3)$; otherwise $i_X=1$. 
For any projective curve $\Gamma \subset X$, we claim that $-K_X\cdot \Gamma\geq 2$ if $n=4$, and otherwise $-K_X\cdot \Gamma\geq 3$. To prove this, take real numbers $a, b$ such that $\Gamma=ae+bf$ in $N_1(X)$. For a hyperplane $H$ of $\P^{n-2}$, $a=\Gamma\cdot \pi^{\ast}H\geq 0$ and $b=\Gamma\cdot \xi\geq 0$; this yields that $a$ and $b$ are nonnegative integers. Assume $\Gamma$ is not contained in $E$. For a hyperplane $H'$ of $\P^n$, we remark that $\varphi^{\ast}H'\cdot \Gamma\geq E\cdot \Gamma$. Since $-K_X=\varphi^{\ast}(-K_{\P^n})-(n-2)E$, we have
$$
-K_X\cdot \Gamma=\left\{(n+1)\varphi^{\ast}H'-(n-2)E\right\}\cdot \Gamma\geq 3\varphi^{\ast}H'\cdot \Gamma\geq 3.
$$
Assume $\Gamma$ is contained in $E\cong \P^1 \times \P^{n-2}$. Then we have 
$$
-K_X\cdot \Gamma=(-K_E+E|_E)\cdot \Gamma=\{(n-1)a+2b\}+ \{-a+b\}=(n-2)a+3b.
$$
This yields that $-K_X\cdot \Gamma\geq 2$ if $n=4$; otherwise $-K_X\cdot \Gamma\geq 3$.
As a consequence, we have 
\begin{equation}
  \iota_X=\ell_X=
  \begin{cases}
    2 & \text{if} ~n=4, \\ \nonumber
    3 & \text{if} ~n\geq 5.
  \end{cases}
\end{equation}
\end{example}

\begin{example}\label{eg:Q:blup:l} Let $X$ be the blow up of $Q^n\subset \P^{n+1}$ along a line $\ell$ for $n\geq 4$. Consider the projection of $Q^n$ from $\ell$; we denote it by $q: Q^n \dashrightarrow \P^{n-1}$. We can eliminate the points of indeterminacy of $q$ by taking the blow-up of $Q^n$ along $\ell$; then we have the following commutative diagram:

\[
  \xymatrix{
   X \ar[r]^{\varphi} \ar[d]_{\pi}  & Q^n \ar@{-->}	[dl]^{q}	 \\
   \P^{n-1}  &   }
\]

For any $p\in \P^{n-1}$, there exists a plane $\Lambda\subset \P^{n+1}$ containing $\ell$ such that the fiber $q^{-1}(p)=\Lambda\cap Q^n\setminus \ell$. It turns out that $\pi$ is a $\P^1$-fibration which is not equidimensional. By Theorem~\ref{them:1}, $\wedge^3T_X$ is not nef if $n> 4$. Thus, we assume that $n=4$. In this case, we can prove that $\wedge^3T_X$ is nef, but we omit the proof because it is routine work (see \cite[Example~1.6]{Yas14}). By Lemma~\ref{eg:k:tau}, $\nu_X=3$ and $\tau_X=2$. On the other hand, we have $$-K_X=\varphi^{\ast}(-K_{Q^4})-2E=2\left(2\varphi^{\ast}H_{Q^4}-E\right),$$ where $E$ is the exceptional divisor of $\varphi$ and $H_{Q^4}$ is a hyperplane section of $Q^4$. This implies that $i_X=2$. Applying Lemma~\ref{lem:pi:4fold} below, we see that $\iota_X=2$. 
\end{example}

\begin{lemma}\label{lem:pi:4fold}\rm  If $X$ is a smooth Fano $4$-fold with $i_X=2$, then $\iota_X=2$. 
\end{lemma}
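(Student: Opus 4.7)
The plan is short because Lemma~\ref{lem:pi:4fold} is essentially a divisibility/bound argument combined with the classification of extremal pseudoindex cases.

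First, I would record the two general constraints on $\iota_X$ collected in Remark~\ref{rem:inv}: the Fano index divides the pseudoindex, so $i_X \mid \iota_X$, and $\iota_X \geq i_X$. With $i_X = 2$ this forces $\iota_X$ to be an even integer $\geq 2$. Second, I would invoke the generalized Kobayashi--Ochiai bound $\iota_X \leq \dim X + 1 = 5$ (stated in the introduction and recorded as part of Theorem~\ref{them:fano:collect}). Combining these two observations, the only possibilities are $\iota_X \in \{2,4\}$.

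It remains to exclude $\iota_X = 4$. Here I would appeal directly to Theorem~\ref{them:fano:collect}(ii): if $\iota_X = n = 4$, then $X$ is isomorphic to the smooth quadric $Q^4$. But $i_{Q^4} = 4$, contradicting the hypothesis $i_X = 2$. Hence $\iota_X = 2$, as required.

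There is no real obstacle: the argument is purely a combination of divisibility, the pseudoindex upper bound, and the characterization of $Q^n$ via $\iota_X = n$. The only subtlety worth flagging in the write-up is remembering that one must use the \emph{pseudoindex} version of Kobayashi--Ochiai (Cho--Miyaoka--Shepherd-Barron and Dedieu--H\"oring) rather than the Fano-index version, since a priori $\iota_X$ is only constrained to be a multiple of $i_X$, not equal to it.
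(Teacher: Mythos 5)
Your proof is correct and is essentially the paper's own argument: the paper likewise notes that $\iota_X$ is a multiple of $i_X$ and then refers to Theorem~\ref{them:fano:collect} to rule out the remaining case, which is exactly your exclusion of $\iota_X=4$ via the characterization of $Q^4$. Your write-up just makes explicit the steps the paper leaves implicit.
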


\begin{proof} By definition, $\iota_X$ is a multiple of $i_X$; then our assertion follows from Theorem~\ref{them:fano:collect}.
\end{proof}

\begin{example}\label{eg:Q4:blup} Let $X$ be the blow up $\varphi: X\to Q^4$ of a smooth quadric $4$-fold $Q^4$ along a smooth projective curve $C$. Assume that $X$ is a Fano variety. Then we have $-K_X=4\varphi^{\ast}H+2E$, where $H$ is a hyperplane section of $Q^4$ and $E$ is the exceptional divisor of $\varphi$. Thus we see that $i_X=2$. By Lemma~\ref{lem:pi:4fold}, we have $\iota_X=2$. By \cite[Theorem~1]{Tsu12}, we also see $\ell_X=2$. 
\end{example}

\begin{example}\label{eg:Q:blup:conic} Let $X$ be the blow-up of $Q^n\subset \P^{n+1}$ along a conic which is not on a plane contained in $Q^n$ for $n\geq 4$. By the same argument as in  Example~\ref{eg:Q:blup:l}, we see that $\wedge^3T_X$ is not nef if $n>4$. If $n=4$, we see that $\wedge^3T_X$ is nef (see \cite[Example~1.9]{Yas14}) and $(\nu_X, \tau_X, i_X, \iota_X, \ell_X)=(3, 2, 2, 2, 2)$. 
\end{example}

\begin{example}\label{eg:PtimesBl:blup} For $n\geq 4$, let $Y$ be the blow up of $\P^{n-1}$ at a point and $X$ be the product of $\P^1$ and $Y$. We denote by $p_1: X \to \P^1$ and $p_2 : X \to \P( \cO_{\P^{n-2}}(1)\oplus \cO_{\P^{n-2}})$ the first and second projections, respectively. Then we see that $\wedge^3 T_X\cong \left(p_1^{\ast}T_{\P^1}\otimes \wedge^2  {p_2}^{\ast}T_{Y}\right)\oplus \wedge^3  {p_2}^{\ast}T_{Y}$ is nef.

By Lemma~\ref{lem:cone} below and Example~\ref{eg:bir:1}, we see that $\iota_X=\ell_X=1$ and 
\begin{equation}
  i_X=
  \begin{cases}
    2 & \text{if} ~n ~\text{is~ even}, \\ \nonumber
    1 & \text{if} ~n ~\text{is ~odd}.
  \end{cases}
\end{equation} 

\end{example}


\begin{lemma}\label{lem:cone}\rm Let $X$ be a product of $\P^m~(m>0)$ and a smooth Fano variety $Y$. Let $p: X\to \P^m$ and $q: X\to Y$ be natural projections. Assume that the Kleiman-Mori cone $\overline{ NE}(Y)$ is generated by rational curves $\ell_1, \ldots, \ell_r$. Then there exist rational curves $\tilde{\ell}, \tilde{\ell_1}, \ldots, \tilde{\ell_r}$ such that 
\begin{enumerate}
\item $\tilde{\ell}$ is contracted by $q$; 
\item $q_{\ast}(\tilde{\ell_i})=\ell_i$ for any $i$;
\item $\overline{ NE}(X)$ is generated by rational curves $\tilde{\ell}, \tilde{\ell_1}, \ldots, \tilde{\ell_r}$.
\end{enumerate}
\end{lemma}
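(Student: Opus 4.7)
The plan is to exhibit the curves explicitly and then use a K\"unneth-type decomposition of $N_1(X)$ together with positivity of an ample pull-back from $\P^m$ to pin down the signs of the coefficients.

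First I would define $\tilde{\ell}$ to be the class of $L\times\{y_0\}$, where $L\subset\P^m$ is any line and $y_0\in Y$ is any point, and $\tilde{\ell_i}:=\{x_0\}\times\ell_i$ for a fixed $x_0\in\P^m$. Each of these is a rational curve (the $\ell_i$ are rational by hypothesis, and $L\cong\P^1$). By construction $q(\tilde{\ell})$ is a point so (i) holds, and $q|_{\{x_0\}\times Y}$ is an isomorphism onto $Y$, so $q_*(\tilde{\ell_i})=\ell_i$, giving (iii... i.e. condition (ii)).

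Next I would use the K\"unneth decomposition: since $N_1(\P^m)\cong\R\cdot[L]$, the map
$$(p_*,q_*)\colon N_1(X)\longrightarrow N_1(\P^m)\oplus N_1(Y)$$
is an isomorphism, with inverse $(a[L],\gamma)\mapsto a[\tilde{\ell}]+[\{x_0\}\times\gamma]$. Hence every class $[C]\in N_1(X)$ can be written uniquely as $[C]=b\,[\tilde{\ell}]+[\{x_0\}\times\gamma]$ with $b\in\R$ and $\gamma=q_*[C]\in N_1(Y)$.

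The main step, and the only place where any argument is needed, is to show that if $[C]\in\cNE(X)$ then both $b\geq0$ and $\gamma\in\cNE(Y)$. The second assertion is immediate from $q_*\cNE(X)\subseteq\cNE(Y)$; together with the assumption $\cNE(Y)=\sum_{i=1}^r\R_{\geq 0}[\ell_i]$, this gives $\gamma=\sum_i a_i[\ell_i]$ with $a_i\geq 0$, so $[\{x_0\}\times\gamma]=\sum_i a_i[\tilde{\ell_i}]$. For $b$, set $H:=p^{\ast}\cO_{\P^m}(1)$, which is nef. A direct computation yields $H\cdot\tilde{\ell}=1$ and $H\cdot\tilde{\ell_i}=0$, hence $b=H\cdot[C]\geq0$. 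Combining,
$$[C]=b[\tilde{\ell}]+\sum_{i=1}^r a_i[\tilde{\ell_i}]\in \R_{\geq 0}[\tilde{\ell}]+\sum_{i=1}^r\R_{\geq 0}[\tilde{\ell_i}],$$
and the reverse inclusion is trivial since each generator is the class of an effective curve.

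The only potential obstacle is confirming that the K\"unneth-style decomposition of $N_1$ holds with these generators; but since $\P^m$ is simply connected and $N_1(\P^m)$ is one-dimensional, the mixed terms vanish and the decomposition is the standard one, so the argument is essentially formal.
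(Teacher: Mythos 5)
Your proposal is correct and follows essentially the same route as the paper: both decompose an arbitrary curve class via $q_*$ and the kernel $\langle\tilde{\ell}\rangle_{\R}$ (your K\"unneth splitting of $N_1(X)$ is exactly this), use $q_*\cNE(X)\subseteq\cNE(Y)$ to get nonnegative coefficients on the $\tilde{\ell_i}$, and pair with the nef class $p^{\ast}\cO_{\P^m}(1)$ to force the coefficient of $\tilde{\ell}$ to be nonnegative.
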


\begin{proof} Let $\Gamma$ be a projective curve on $X$. Then there exist nonnegative real numbers $c_1,\ldots, c_r$ such that $q_{\ast}=\sum_{i=1}^rc_i\ell_i$ in $\overline{ NE}(Y)$. Since $\Gamma-\sum_{i=1}^rc_i\tilde{\ell_i}$ is contained in ${\rm Ker}(q_{\ast})=\langle \tilde{\ell} \rangle_\R \subset \overline{ NE}(X)$, there exists $a\in \R$ such that $\Gamma=a\tilde{\ell}+\sum_{i=1}^rc_i\ell_i$. Then we have $a=p^{\ast}(\cO_{\P^m}(1))\cdot \Gamma\geq 0$.
\end{proof}


Let $X$ be a smooth Fano variety of dimension $n\geq 4$. Assume that $\tau_X\geq n-2$ and $\rho_X>1$. Putting all the above together, we get the following Table~$1$.
In Table~$1$, ${Bl}_{\ell}(\P^n)$ is the blow up of $\P^n$ along a line $\ell$; ${Bl}_{\ell}(\Q^4)$ is the blow up of $Q^4$ along a line $\ell$; ${Bl}_{C}(\Q^4)$ is the blow up of $Q^4$ along a conic $C$ which is not on a plane contained in $Q^4$.
\begin{table}[H]
  \caption{Fano varieties with large $\tau_X$ and a birational contraction}
  \label{table:data_type}
  \centering
  \begin{tabular}{cccccc}
    \hline
    $X$  & $\tau_X$  &  $\iota_X=\ell_X$ & $i_X$ & $\nu_X$  \\
    \hline \hline
    $\P(\cO_{\P^{n-1}}\oplus \cO_{\P^{n-1}}(1))$                  & $n-1$  & $2$ & \begin{tabular}{c}
$1$ if $n$ is even.\\  \hline
$2$ if $n$ is odd.
\end{tabular} & $2$   \\ \hline
    $\P(\cO_{Q^{n-1}}\oplus \cO_{Q^{n-1}}(1))$                    & $n-2$ & $2$ & \begin{tabular}{c}
$2$ if $n$ is even.\\  \hline
$1$ if $n$ is odd.
\end{tabular} & $3$ \\ \hline
    $\P(\cO_{\P^{n-1}}\oplus \cO_{\P^{n-1}}(2))$                  & $n-2$  & $2$ & \begin{tabular}{c}
$2$ if $n$ is even.\\  \hline
$1$ if $n$ is odd. \end{tabular} & $3$ \\ \hline
    ${Bl}_{\ell}(\P^n)=\P(\cO_{\P^{n-2}}^{\oplus 2}\oplus \cO_{\P^{n-2}}(1))$  & $n-2$  & \begin{tabular}{c}
$2$ if $n=4$.\\  \hline
$3$ if $n\geq 5$.
\end{tabular} & \begin{tabular}{c}
$3$ if $n\equiv 2~(3)$.\\  \hline
$1$ otherwise.
\end{tabular} & $3$  \\ \hline
${Bl}_{\ell}(\Q^4)$                  & $n-2=2$  & $2$ & $2$ & $3$   \\ \hline
${Bl}_{C}(\Q^4)$                  & $n-2=2$  & $2$ & $2$ & $3$   \\ \hline
    $\P^1\times \P(\cO_{\P^{n-2}}\oplus \cO_{\P^{n-2}}(1))$ & $n-2$  & $2$  & \begin{tabular}{c}
$2$ if $n$ is even.\\  \hline
$1$ if $n$ is odd.
\end{tabular} & $3$ \\ \hline
  \end{tabular}
\end{table}

Let $X$ be a variety as in Theorem~\ref{them:3} and assume $n=\dim X \geq 5$. If $X$ is not isomorphic to $\P(\cO_{\P^{3}}^{\oplus 2}\oplus \cO_{\P^{3}}(1))$, then $X$ is homogeneous; thus the tangent bundle $T_X$ is nef. One can obtain the following table: 

\begin{table}[H]
  \caption{Fano varieties with large $\iota_X$ and $\rho_X>1$}
  \label{table:data_type}
  \centering
  \begin{tabular}{cccccc}
    \hline
    $X$  & $\tau_X$  &  $\iota_X=\ell_X$ & $i_X $ & $\nu_X$   \\
    \hline \hline
        $\P^3\times \P^3$                  & $+\infty$  & $4$ & $4$ & $1$  \\ \hline
    $\P(\cO_{\P^{3}}^{\oplus 2}\oplus \cO_{\P^{3}}(1))$  & $3$  & $3$ & $1$ & $3$  \\ \hline
    $\P^3\times \P^2$                  & $+\infty$ & $3$ & $1$ & $1$  \\ \hline
    $Q^3\times \P^2$                  & $+\infty$  & $3$ & $3$ & $1$  \\ \hline
    $\P(T_{\P^3})$  & $+\infty$  & $3$ & $3$ & $1$  \\ \hline
  \end{tabular}
\end{table}

\begin{example}\label{eg:(1,1)} Let $X$ be a divisor on $\P^2 \times \P^3$ of bidegree $(1,1)$. Denote by the homogeneous coordinates of $\P^2$ and $\P^3$ by $(X_0, X_1, X_2)$ and $(Y_0, Y_1, Y_2, Y_3)$ respectively. 
Changing the coordinates if necessary, we may assume that 
$$X=\{X_0Y_0+X_1Y_1+X_2Y_2=0\}\subset \P^2 \times \P^3.$$  
Let $p_1: X\to \P^2$ and $p_2: X\to \P^3$ be the two natural projections. We denote by $H_1$ a hyperplane of $\P^2$ and by $H_2$ a hyperplane of $\P^3$. We see that $p_1$ is a $\P^2$-bundle. On the other hand, a fiber of $p_2$ at $(0, 0, 0, 1)\in \P^3$ is isomorphic to $\P^2$, while all other fibers of $p_2$ are lines in $\P^2$. Let $\ell_1$ be a line in a fiber of $p_1$ and $\ell_2$ be a general fiber of $p_2$. Then we have 
$$
\overline{ NE}(X)=\R_{\geq 0}[\ell_1]+\R_{\geq 0}[\ell_2], 
$$  
$$
-K_X\cdot \ell_1=3,\quad-K_X\cdot \ell_2=2,\quad p_1^{\ast}\cO_{\P^2}(1)\cdot \ell_2=p_2^{\ast}\cO_{\P^3}(1)\cdot \ell_1=1.
$$
For any projective curve $\Gamma\subset X$, there exist nonnegative integers $a, b$ such that $\Gamma=a\ell_1+b\ell_2 \in \overline{ NE}(X)$. Thus we have $-K_X\cdot \Gamma =3a+2b\geq 2$. This implies that $\iota_X=\ell_X=2$. Since $-K_X=p_1^{\ast}H_1+ p_2^{\ast}H_2$, $X$ is a Fano variety with $i_X=1$.    
\end{example}

Let $X$ be a smooth Fano $4$-fold as in Theorem~\ref{them:3}. 
Using the above results so far, we obtain the following table. In this table, if X is a Fano fourfold of $i_X=2$, the values of $\tau_X$ and $\nu_X$ differ depending on which variety $X$ is isomorphic to, so they are abbreviated as $\ast$.
\begin{table}[H]
  \caption{Fano $4$-folds with $\iota_X\geq2$ and $\rho_X>1$}
  \label{table:data_type}
  \centering
  \begin{tabular}{cccccc}
    \hline
    $X$  & $\tau_X$  &  $\iota_X=\ell_X$ & $i_X $ & $\nu_X$   \\
    \hline \hline
     $\P(\cO_{\P^{3}}\oplus \cO_{\P^{3}}(1))$  & $3$  & $2$ & $1$ & $2$  \\ \hline
    $\P(\cO_{\P^{2}}^{\oplus 2}\oplus \cO_{\P^{2}}(1))$  & $2$  & $2$ & $1$ & $3$  \\ \hline
     a divisor on $\P^2\times \P^3$ of bidegree $(1, 1)$                  & $2$  & $2$ & $1$ & $3$  \\ \hline
     $\P^2\times \P^2$                & $+\infty$  & $3$ & $3$ & $1$  \\ \hline

    $\P^1\times Q^3$             & $+\infty$ & $2$ & $1$ & $1$  \\ \hline
    a Fano fourfold of $i_X=2$             & $\ast$  & $2$ & $2$ & $\ast$  \\ \hline
  \end{tabular}
\end{table}

\section{Contractions of birational type}

\if0
\begin{theorem}[\cite{Yas14}]\label{them:bir:contr} \rm  Let $X$ be a smooth Fano variety of dimension $n=\dim X\geq 4$ and $\tau_X=n-2$. If $X$ admits a birational elementary contraction, then $X$ is one of the following:
\begin{enumerate}
\item $\P(\cO_{Q^{n-1}}\oplus \cO_{Q^{n-1}}(-1))$;
\item $\P(\cO_{\P^{n-1}}\oplus \cO_{\P^{n-1}}(-2))$;
\item the blow-up of $\P^n$ along a line;
\item the blow-up of $Q^4$ along a line;
\item the blow-up of $Q^4$ along a conic which is not on a plane contained in $Q^4$;
\item $\P^1\times \P(\cO_{\P^{n-2}}\oplus\cO_{\P^{n-2}}(1))$.
\end{enumerate}
\end{theorem}
\fi
In this section, we prove Theorem~\ref{them:2}. Yasutake proved it in \cite{Yas14} when $\wedge^3 T_X$ is nef; however, it seems that his proof contains a nontrivial gap (see Remark~\ref{rem:Yasutake}); thus, following the idea of the proof of \cite{Yas14}, we fill the gap. 
In the rest of this section, we always assume the following: 
 \begin{assumption}\label{ass:bir} \rm Let $X$ be a smooth Fano variety of dimension $n \geq 4$ and $\tau_X\geq n-2$. Moreover, assume that $X$ admits a birational contraction $\varphi: X \to Y$ of an extremal ray $R$. We denote by $E$ the exceptional locus of $\varphi$ and by $F$ a fiber of $\varphi|_{E}: E\to W:=\varphi(E)$. 
\end{assumption}

\begin{lemma}\label{lem:bir:type} \rm  $(\dim E, \dim W)=(n-1, 0)$ or $(n-1,1)$. We have $\ell(R)=n-2$ in the latter case. 
\end{lemma}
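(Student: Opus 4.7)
The plan is to combine the assumed lower bound $\tau_X\ge n-2$ with the Ionescu--Wi\'sniewski inequality (Proposition~\ref{prop:Ion:Wis}) to constrain both $\dim E$ and $\dim W$.

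First I would establish that $\ell(R)\ge n-2$. Any rational curve $C$ with $[C]\in R$ is contracted by $\varphi$ and hence is contained in the exceptional locus $E$, so it cannot be free: a free rational curve deforms in a dominating family, and all members of that family share the same numerical class in $R$, which would force $\varphi$ to contract a covering family --- contradicting the fact that $\varphi$ is birational. Consequently every rational curve with class in $R$ is non-free, and the minimum of $-K_X\cdot C$ over such curves satisfies $\ell(R)\ge \tau_X\ge n-2$.

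Next, applying Proposition~\ref{prop:Ion:Wis} to any irreducible component $F_0$ of a non-trivial fiber yields
$$\dim E+\dim F_0\ge n+\ell(R)-1\ge 2n-3.$$
Since $F_0\subset E$, one has $\dim F_0\le \dim E\le n-1$, so $2\dim E\ge 2n-3$ forces $\dim E= n-1$; that is, $\varphi$ is divisorial. Plugging this back in gives $\dim F_0\ge \ell(R)\ge n-2$ for every irreducible component of every non-trivial fiber of $\varphi$.

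To pin down $\dim W$, I would examine a general fiber of the surjection $\varphi|_E:E\to W$, whose irreducible components have dimension exactly $\dim E-\dim W = n-1-\dim W$. Combining this with the bound $\dim F_0\ge n-2$ above yields $n-1-\dim W\ge n-2$, i.e.\ $\dim W\le 1$. When $\dim W=1$ the general-fiber components have dimension precisely $n-2$, so Ionescu--Wi\'sniewski now reads $n-1+(n-2)\ge n+\ell(R)-1$, giving $\ell(R)\le n-2$; together with $\ell(R)\ge n-2$ this forces $\ell(R)=n-2$, as required.

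The only delicate point in this argument is the very first step --- the claim that no rational curve with class in $R$ can be free. I would make this explicit by invoking the standard fact that the deformation space of a free rational curve is smooth of the expected dimension and sweeps out a Zariski open subset of $X$, combined with the constancy of the numerical class in a connected family of $1$-cycles; together these imply that a free curve in $R$ produces a covering family whose class lies in $R$, which is incompatible with the birationality of $\varphi$.
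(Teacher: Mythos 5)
Your argument is correct and follows essentially the same route as the paper: both deduce $\ell(R)\ge n-2$ from the fact that curves contracted by a birational contraction cannot be free (so $\tau_X\ge n-2$ applies), and then squeeze $\dim E$, $\dim F$, and $\ell(R)$ out of the Ionescu--Wi\'sniewski inequality $2(n-1)\ge \dim E+\dim F\ge n+\ell(R)-1\ge 2n-3$. You simply spell out the intermediate steps (divisoriality, $\dim W\le 1$, and the forced equality $\ell(R)=n-2$ when $\dim W=1$) that the paper leaves implicit.
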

 
\begin{proof} Since any rational curve $C$ contracted by $\varphi$ is not free, we have $-K_X\cdot C \geq n-2$. Then Proposition~\ref{prop:Ion:Wis} yields that 
$$
2(n-1)\geq \dim E +\dim F \geq n+\ell(R)-1\geq n+(n-2)-1=2n-3.
$$ 
As a consequence, we obtain our assertion.
\end{proof}

\begin{lemma}\label{lem:E-negative:ray} \rm There exists an extremal ray $R'$ such that $E\cdot R'>0$. Moreover, every fiber of the contraction of the ray $R'$ has dimension at most two.
\end{lemma}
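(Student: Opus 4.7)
The plan is to establish the two claims essentially independently. For the first, I observe that $E$ is an effective divisor with $E\cdot R<0$, hence not nef. Picking any irreducible curve $C_0\subset X$ meeting $E$ but not contained in $E$ (possible since $X$ is connected and $E$ is a proper non-empty closed subset), we get $E\cdot C_0>0$. Decomposing $[C_0]$ as a non-negative combination of the finitely many extremal ray classes of the Fano variety $X$, at least one of these rays $R'$ must satisfy $E\cdot R'>0$.

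For the fiber dimension bound, let $\psi:X\to Z$ denote the contraction of such an $R'$. Because $E\cdot R<0<E\cdot R'$, we have $R'\neq R$, so $R\cap R'=\{0\}$ in $N_1(X)$; in particular no actual curve can have class lying in both rays. Moreover, every rational curve in $R'$ meets $E$ positively, so each non-trivial fiber $F'$ of $\psi$ satisfies $F'\cap E\neq\emptyset$. The strategy is to assume $\dim F'\geq 3$ for some irreducible component of a fiber (still denoted $F'$) and then to produce an irreducible curve $C\subset F'$ that is contracted by $\varphi$: such a $C$ would have $[C]\in R\cap R'=\{0\}$, a contradiction.

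To produce $C$, I split into two cases. If $F'\subset E$, then $\varphi(F')\subset W$ has dimension at most one by Lemma~\ref{lem:bir:type}, so a general fiber of $\varphi|_{F'}$ has dimension at least $\dim F'-1\geq 2$, inside which $C$ can be chosen. If $F'\not\subset E$, then $F'\cap E$ is a non-empty subset of pure codimension one in $F'$ (since $E$ is Cartier on the smooth $X$ and $F'$ is irreducible), so $\dim(F'\cap E)\geq 2$; combined with $\varphi(F'\cap E)\subset W$ of dimension at most one, this forces $\varphi|_{F'\cap E}$ to have a fiber of dimension at least one, inside which $C$ is chosen. In both cases $C$ lies in $F'$ and is contracted by $\varphi$, completing the contradiction.

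I do not foresee a serious obstacle. The argument rests on just two ingredients: the numerical separation $R\cap R'=\{0\}$, secured by the opposite signs of $E$ on the two rays, and the bound $\dim W\leq 1$ from Lemma~\ref{lem:bir:type}. The only mild technicality is passing to an irreducible component of a possibly reducible fiber and invoking pure codimension one for proper intersections with the Cartier divisor $E$, both entirely routine.
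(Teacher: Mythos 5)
Your proof is correct and follows essentially the same route as the paper: the first part is the identical Cone-theorem decomposition of a curve with $E\cdot C_0>0$, and your case analysis for the fiber bound is a correct spelling-out of the ``standard dimension count'' the paper invokes without detail (intersecting a fiber of $\psi$ with $E$, using $\dim E=n-1$ and $\dim W\leq 1$ from Lemma~\ref{lem:bir:type} to produce a curve contracted by both $\varphi$ and $\psi$, contradicting $R\cap R'=\{0\}$). No gaps.
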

 
\begin{proof} Take a rational curve $C_0 \subset X$ such that $E\cdot C_0>0$. According to the Cone theorem, there exist finitely many extremal rational curves $C_i$'s and positive real numbers $a_i$'s such that $[C_0]$ is numerically equivalent to $\sum a_i [C_i]$; since we have 
$$
\sum a_i (E\cdot C_i)=E\cdot C_0 >0,
$$
there exists an extremal rational curve $C_{i_0}$ such that $E\cdot C_{i_0}>0$ as desired. 

The latter part of our assertion follows from standard dimension count.
\end{proof}

In the following, we denote by $\psi: X\to Y'$ the contraction associated to $R'$. 

\begin{proposition}\label{prop:bir:fiber:n-1:0} \rm  Assume that $\psi$ is of fiber type. If $(\dim E, \dim W)=(n-1, 0)$, then $X$ is isomorphic to one of the following:
\begin{enumerate}
\item $\P(\cO_{\P^{n-1}}\oplus \cO_{\P^{n-1}}(1))$. 
\item $\P(\cO_{Q^{n-1}}\oplus \cO_{Q^{n-1}}(1))$.
\item $\P(\cO_{\P^{n-1}}\oplus \cO_{\P^{n-1}}(2))$.
\end{enumerate}
\end{proposition}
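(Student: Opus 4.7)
My plan is to use the two extremal contractions $\varphi$ and $\psi$ in tandem. First, I will show that $\psi$ must be a $\P^1$-bundle over a smooth $(n-1)$-fold $Y'$. Every irreducible curve in $E$ has class in $R$ (since $\varphi$ contracts $E$ to a point), while every fiber of $\psi$ consists of curves in $R' \neq R$. Hence $\psi|_E \colon E \to Y'$ has zero-dimensional fibers, giving $\dim \psi(E) = \dim E = n-1$ and so $\dim Y' \geq n-1$. The $Q^{n-2}$-fibration case (where $\dim Y'=2$) and the $\P^{n-3}$-fibration case with $n\geq 5$ (where $\dim Y'=3$) of Theorem~\ref{them:1} are immediately ruled out. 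In the remaining case $n=4$ with $\psi$ an inequidimensional $\P^1$-fibration, any jumping fiber has dimension $\geq 2$ and cannot meet $E$ (else the intersection would contain a curve forced to lie in both $R$ and $R'$); since $\psi|_E$ is proper and its image is closed of dimension $n-1=\dim Y'$, the image equals $Y'$, so no jumping fibers exist and $\psi$ is equidimensional. Thus $\psi$ is a $\P^1$-bundle over a smooth $Y'$ of dimension $n-1$.

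Next, I identify $E$ and its normal bundle. Any rational curve in $E$ is non-free, so $\ell(R) \geq \tau_X \geq n-2$. By the classical Ando--Wi\'sniewski classification of divisorial contractions to a point of length at least $n-2$, $E$ is smooth and the pair $(E, N_{E/X})$ is isomorphic to one of $(\P^{n-1}, \cO(-1))$, $(\P^{n-1}, \cO(-2))$, or $(Q^{n-1}, \cO_{Q^{n-1}}(-1))$.

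To conclude, the finite surjective morphism $\psi|_E \colon E \to Y'$ between smooth $(n-1)$-folds, combined with Lazarsfeld's theorem (for $E \cong \P^{n-1}$) or its analogue for quadrics (for $E \cong Q^{n-1}$), identifies $Y'$ as $\P^{n-1}$ or $Q^{n-1}$ matching $E$. Writing $X = \P(\cE)$ for a rank-$2$ bundle $\cE$ on $Y'$, the vanishing $H^1(Y', L) = 0$ for every line bundle $L$ on $Y'$ forces $\cE$ to split, say $\cE \cong \cO_{Y'} \oplus \cM$. An intersection computation comparing $E \cdot f$ for a fiber $f$ of $\psi$, the adjunction formula $-K_X|_E = -K_E + N_{E/X}$, and the Euler relation $-K_X = 2\xi - \psi^*(K_{Y'} + c_1(\cE))$ then pins $E$ down as the section of $\psi$ with normal bundle $\cM^{-1}$; hence $\cM \cong N_{E/X}^{\vee}$ and $X \cong \P(\cO_{Y'} \oplus N_{E/X}^{\vee})$, matching each of the three listed varieties.

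The main obstacle is the final step: one must invoke Lazarsfeld-type rigidity to identify $Y'$ from its finite cover by a projective space or quadric, and then rule out the possibility that $E$ is a multi-section of $\psi$ of higher degree rather than a genuine section. Both conclusions follow from the constraints fixed by the classification of $(E, N_{E/X})$ in the middle step, but require careful intersection-theoretic bookkeeping on the $\P^1$-bundle $\P(\cO_{Y'} \oplus \cM)$ to match the computed $N_{E/X}$ with the appropriate section class.
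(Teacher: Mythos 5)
Your opening step is fine and essentially parallels the paper: the paper also shows $\psi(E)=Y'$ (no curve of $E$ can lie in $R'$), deduces $\dim\psi^{-1}(y')\le 1$ from $0=\dim(\psi^{-1}(y')\cap E)\ge \dim\psi^{-1}(y')+\dim E-n$, and upgrades the resulting conic bundle to a $\P^1$-bundle using $\iota_X\ge 2$. The first genuine gap is your middle step. The ``classical Ando--Wi\'sniewski classification of divisorial contractions to a point of length at least $n-2$,'' asserting that $E$ is smooth and $(E,N_{E/X})$ is one of the three listed pairs, is not a statement of either author: Ando's classification concerns contractions with fibers of dimension at most one, and Wi\'sniewski's contribution here is only the inequality $\dim E+\dim F\ge n+\ell(R)-1$. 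The fiber classifications that do exist (Fujita's $\Delta$-genus/adjunction theory, Andreatta--Wi\'sniewski on fibers of small dimension relative to the length) a priori allow singular quadrics, cones and non-normal fibers, so smoothness of $E$ and the exact three-item list are precisely the hard content of the proposition, not a citable input. The paper obtains this from the $\P^1$-bundle structure rather than from the divisorial contraction alone: by \cite{Fuj12} (Proposition~2.2 and its proof), $Y'$ is Fano with $\rho_{Y'}=1$, $X\cong\P(\cO_{Y'}\oplus\cO_{Y'}(s))$, and $E$ is a \emph{section} of $\psi$; smoothness of $E$ and $E\cong Y'$ then come for free, and $E$ is identified by Kobayashi--Ochiai from the numerical squeeze $n\ge -K_E\cdot f=-K_X\cdot f-E\cdot f\ge (n-2)+1=n-1$, where $E\cdot f<0$ comes from the negativity lemma.

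The second gap is that your splitting argument is circular as written. The vanishing $H^1(Y',L)=0$ for all line bundles $L$ does \emph{not} force a rank-two bundle $\cE$ on $Y'$ to split: the null correlation bundle on $\P^3$ and the spinor bundle on $Q^3$ are non-split rank-two Fano bundles, and both actually occur elsewhere in this paper (Proposition~\ref{prop:P1bundle}). The vanishing only shows that an \emph{extension of line bundles} splits, and to realize $\cE$ as such an extension you need a section of $\psi$ --- that is, exactly the statement $E\cdot f'=1$ for a fiber $f'$ of $\psi$, which you defer to the end as ``intersection-theoretic bookkeeping.'' That degree-one statement is the essential input (it is part of what \cite{Fuj12} proves), and once it is established both of your remaining difficulties --- identifying $Y'$ via Lazarsfeld-type rigidity and splitting $\cE$ --- evaporate, since $\psi|_E\colon E\to Y'$ is then an isomorphism. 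I would therefore reorganize the proof around first proving that $E$ is a section of the $\P^1$-bundle, rather than around an unproved classification of $(E,N_{E/X})$.
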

 
\begin{proof} We claim that $\psi(E)$ coincides with $Y'$. If not, a curve $D$ exists in the exceptional divisor $E$, which is contracted by $\psi$; this is a contradiction because the curve $[D]$ lives in the rays $R$ and $R'$. Then it turns out that $\psi^{-1}(y')\cap E$ is a non-empty finite set for any point $y' \in Y'$. This yields an inequality
$$
0=\dim \psi^{-1}(y')\cap E\geq \dim \psi^{-1}(y')+\dim E -n=\dim \psi^{-1}(y')-1.
$$ 
This tells us that $\dim \psi^{-1}(y') \leq 1$ for any point $y' \in Y'$; since $\psi$ is of fiber type, it is a conic bundle. 
By our assumption $\tau_X\geq n-2$, the anticanonical degree of any rational curve is at least two; this implies that $\psi$ is a $\P^1$-bundle. Applying \cite[Proposition~2.2]{Fuj12} and its proof, $Y'$ is an $(n-1)$-dimensional smooth Fano variety of $\rho_{Y'}=1$ whose Fano index $i_{Y'}$ satisfies with $i_{Y'}>s>0$; moreover, denoting by $\cO_{Y'}(1)$ the ample generator of ${\rm Pic}(Y')$, $X$ is isomorphic to $\P(\cO_{Y'}\oplus \cO_{Y'}(s))$ and $E$ gives a section of $\psi: X \to Y'$. 

Let $f$ be a minimal extremal rational curve of the ray $R$; by our assumption $\tau_X\geq n-2$ and \cite{CMSB}, the curve $f$ satisfies with $-K_X\cdot f \geq n-2$ and $n \geq -K_E\cdot f$. We have $K_X=\varphi^{\ast}K_Y+\alpha E$ for some $\alpha\in \Q$; then $-\alpha E=-K_X+ \varphi^{\ast}K_Y$ is $\varphi$-nef. Since $\varphi_{\ast}(-\alpha E)=0$ is effective, the negativity lemma (\cite[Lemma~3.39]{KM}) yields that $-\alpha E$ is effective; then it turns out that $E\cdot f$ is negative. Thus we have 
$$
n \geq -K_E\cdot f=-K_X\cdot f-E\cdot f\geq (n-2)+1=n-1.
$$
This implies that $(-K_E\cdot f, -K_X\cdot f, -E\cdot f)=(n, n-1, 1), (n-1, n-2, 1)$ or $(n, n-2,2)$. 

Let us assume that $(-K_E\cdot f, -K_X\cdot f, -E\cdot f)=(n, n-1, 1)$. Since $Y'$ is an $(n-1)$-dimensional smooth Fano variety of $\rho_{Y'}=1$, we see that $i_{Y'}=n$ and $s=1$; by Kobayashi-Ochiai Theorem~\cite{KO}, we obtain $E\cong Y'\cong \P^{n-1}$ and $X\cong \P(\cO_{\P^{n-1}}\oplus \cO_{\P^{n-1}}(-1))$. If $(-K_E\cdot f, -K_X\cdot f, -E\cdot f)=(n-1, n-2, 1)$, the same argument implies that $X\cong \P(\cO_{Q^{n-1}}\oplus \cO_{Q^{n-1}}(-1))$. 
Thus assume that $(-K_E\cdot f, -K_X\cdot f, -E\cdot f)=(n, n-2,2)$. 
Applying \cite{CMSB}, we see that $E\cong Y' \cong \P^{n-1}$ and $X\cong \P(\cO_{\P^{n-1}}\oplus \cO_{\P^{n-1}}(-2))$.
 \end{proof}

\begin{proposition}\label{prop:bir:div:curve} \rm Assume that $\psi$ is of fiber type. If $(\dim E, \dim W)=(n-1, 1)$, then $X$ is isomorphic to one of the following:
\begin{enumerate}
\item the blow-up of $\P^n$ along a line;
\item the blow-up of $Q^4$ along a line;
\item the blow-up of $Q^4$ along a conic which is not on a plane contained in $Q^4$;
\item $\P^1\times \P(\cO_{\P^{n-2}}\oplus\cO_{\P^{n-2}}(1))$.
\end{enumerate}
\end{proposition}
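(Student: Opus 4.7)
The plan is to apply Theorem~\ref{them:1} to the contraction $\psi$ of $R'$ and use Lemma~\ref{lem:E-negative:ray} (every fiber of $\psi$ has dimension at most two) to reduce to a short list of sub-cases, then match each to one of the four target varieties using Lemma~\ref{lem:bir:type}. The fiber-dimension bound combined with Theorem~\ref{them:1} leaves exactly three possibilities: (a) $\psi$ is a smooth morphism whose fibers are smooth Fano varieties of Picard number one and dimension at most two, hence either a $\P^1$-bundle or a $\P^2$-bundle; (b) $n=4$ and $\psi$ is a non-equidimensional $\P^1$-fibration; or (c) $n=4$ and $\psi$ is an equidimensional $Q^2$-fibration.

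In sub-case (a) with fiber dimension two, I would first show that a general fiber $F$ of $\varphi$ is isomorphic to $\P^{n-2}$ with $E|_F \cong \cO_{\P^{n-2}}(-1)$, by combining $\ell(R)=n-2$ with adjunction on $F$ and the CMSB characterization in Theorem~\ref{them:fano:collect}(i). A dimension count inside a $\P^2$-fiber of $\psi$ shows that $\psi|_F: F \to Y'$ is generically finite onto the $(n-2)$-dimensional smooth variety $Y'$, so by Lazarsfeld's theorem it is an isomorphism; hence $Y'\cong \P^{n-2}$. The rank-three bundle on $Y'$ defining $\psi$ is then forced to split as $\cO^{\oplus 2}\oplus \cO(1)$, by reading off the class of the section cut out by $E$, giving $X\cong \P(\cO^{\oplus 2}\oplus\cO(1))$, i.e.\ the blow-up of $\P^n$ along a line. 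In sub-case (a) with fiber dimension one, $\psi$ is a $\P^1$-bundle and $E$ carries two commuting projections $\psi|_E$ and $\varphi|_E$, forcing $E\cong \P^1\times \P^{n-2}$; one then passes the product structure from $E$ to all of $X$ as described below, and a comparison of extremal rays with Theorem~\ref{them:bir:contr:n-1} identifies $X\cong \P^1\times \P(\cO\oplus\cO(1))$, yielding item (iv).

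For sub-cases (b) and (c), with $n=4$, the identification is more direct: in (b), matching the codimension-one jumping-fiber locus of $\psi$ with the exceptional divisor $E$ of $\varphi$, and then invoking the classification of non-equidimensional $\P^1$-fibrations, recovers $X$ as the blow-up of $Q^4$ along a line (cf.\ Example~\ref{eg:Q:blup:l}); in (c), the equidimensional $Q^2$-fibration combined with the divisorial contraction to a curve exhibits $X$ as the blow-up of $Q^4$ along a conic not lying on a plane of $Q^4$ (cf.\ Example~\ref{eg:Q:blup:conic}). The main obstacle is the $\P^1$-bundle sub-case of (a) leading to item (iv), where one must propagate the product structure from the divisor $E$ to the whole $X$; this is precisely where Yasutake's original argument in \cite{Yas14} appears incomplete. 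The fix consists in producing a contraction $X\to \P^1$ from the Stein factorization of $\varphi$ followed by a suitable projection of $Y$, applying Proposition~\ref{prop:charact:P1:product} to obtain $X\cong \P^1\times Z$, and then using the Zariski main theorem on $\varphi\times\psi: X\to Y\times Y'$ together with Theorem~\ref{them:bir:contr:n-1} to identify $Z\cong \P(\cO\oplus\cO(1))$.
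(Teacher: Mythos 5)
Your skeleton is genuinely different from the paper's. The paper does not run $\psi$ through Theorem~\ref{them:1} at all: it uses $\ell(R)=n-2$ and \cite[Theorem~5.1]{AO02} to identify $\varphi$ as the blow-up of the smooth variety $Y$ along a smooth curve $W$, observes that the fibers $F\cong\P^{n-2}$ of $E=\P(N_{W/Y}^{\vee})\to W$ map finitely under $\psi$ so that $\dim Y'\geq n-2$, and then invokes Tsukioka's classification \cite[Lemma~2, Propositions~3 and 4]{Tsu12} of such blow-ups, discarding the entries of pseudoindex one. Your trichotomy for $\psi$ (a smooth $\P^1$- or $\P^2$-bundle; $n=4$ with a non-equidimensional $\P^1$-fibration; $n=4$ with an equidimensional $Q^2$-fibration) is correct and is a legitimate alternative frame. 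The problem is that the identification work which the paper outsources to \cite{Tsu12} is only asserted in your sketch, and several of the assertions fail as stated.

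Concretely: (1) in the $\P^2$-bundle case, Lazarsfeld's theorem gives $Y'\cong\P^{n-2}$ but does not make $\psi|_F\colon F\to Y'$ an isomorphism, since finite surjective self-maps of $\P^{n-2}$ of degree greater than one exist; a separate degree-one argument is needed. Moreover $E$ has one-dimensional fibers over $Y'$, so it is not a section of the $\P^2$-bundle, and ``reading off the class of the section cut out by $E$'' does not determine the rank-three bundle; pinning it down as $\cO^{\oplus 2}\oplus\cO(1)$ requires a genuine argument. (2) Your repair of the item (iv) case is circular: $\varphi$ is birational, so its Stein factorization is trivial, and the ``suitable projection of $Y$'' onto $\P^1$ is exactly what must be produced --- it amounts to showing that the centre $W\cong\P^1$ moves in a fibration of $Y$, i.e.\ essentially the product structure you are trying to prove. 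Without an independently constructed contraction $X\to\P^1$, Proposition~\ref{prop:charact:P1:product} cannot be applied. (3) For $n=4$ you appeal to ``the classification of non-equidimensional $\P^1$-fibrations,'' but no such classification is available here independently of \cite{Tsu12}; Examples~\ref{eg:Q:blup:l} and \ref{eg:Q:blup:conic} only show that the two blow-ups of $Q^4$ carry such structures, not that they are the only Fano fourfolds with $\tau_X\geq 2$ that do. So the case division is sound, but the proof has genuine gaps precisely at the classification steps.
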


\begin{proof} By Lemma~\ref{lem:bir:type}, we have $\ell(R)=n-2$; thus \cite[Theorem~5.1]{AO02} implies that $\varphi: X \to Y$ is the blow-up of $Y$ along a smooth curve $W\subset Y$; by our assumption $\tau_X\geq n-2$ and \cite[II~Lemma~3.13]{Kb}, we have $\iota_X\geq 2$. 
Since any fiber $F\cong \P^{n-2}$ of $E=\P(N_{W/Y}^{\vee}) \to W$ is not contracted by $\psi$, $\psi|_{F}: F\cong \P^{n-2}\to \psi(F)$ is a finite morphism. Thus it turns out that $n-2 =\dim \psi(F)\leq \dim Y'<n$. Then our assertion follows from \cite[Lemma~2, Proposition~3 and Proposition~4]{Tsu12},  Example~\ref{eg:Q:blup:l} and Example~\ref{eg:Q:blup:conic}. Remark that the pseudo indices of the Fano varieties $3, 4$, and $5$ in \cite[Proposition~4]{Tsu12} are all one. Thus, our assertion holds.
\end{proof}

\begin{proposition}\label{prop:E-negative:ray} \rm The contraction $\psi: X\to Y'$ of the extremal ray $R'$ is of fiber type.  
\end{proposition}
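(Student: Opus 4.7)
The plan is to argue by contradiction: suppose $\psi$ is birational, with exceptional divisor $E'$ and image $W' := \psi(E')$.

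The first step is to reduce to $n = 4$. Every $R'$-curve lies in $E'$ and is therefore non-free, so by $\tau_X \ge n-2$ we obtain $\ell(R') \ge n-2$. Then Lemma~\ref{lem:bir:type} applied to $\psi$ gives $\dim E' = n-1$ with $\dim W' \in \{0,1\}$. By Lemma~\ref{lem:E-negative:ray}, every $\psi$-fiber has dimension at most $2$: the case $\dim W' = 0$ would force $n-1 \le 2$, impossible under $n\ge 4$; and $\dim W' = 1$ makes the general fiber $F'$ of $\psi|_{E'}$ satisfy $\dim F' = n-2 \le 2$, so $n = 4$. In the remaining configuration $\ell(R') = 2$, and \cite[Theorem~5.1]{AO02} identifies $\psi$ as the smooth blow-up of a smooth fourfold $Y'$ along a smooth curve $W'$, with every $\psi|_{E'}$-fiber isomorphic to $\P^2$.

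To finish in dimension four, the strategy is to exploit intersection-theoretic rigidity. The hypothesis $E \cdot R' > 0$ makes $E|_{F'}$ an ample line bundle $\cO_{\P^2}(d)$ with $d \ge 1$ on every fiber $F' \cong \P^2$ of $\psi|_{E'}$, so $[E \cap F'] = d[\ell']$ in $N_1(X)$ for a line $\ell' \subset F'$. From $\ell(R') = 2$ one reads off $-K_X \cdot (E\cap F') = 2d$, and by adjunction on $E$, $-K_E \cdot (E\cap F') = d(2-d)$. Moreover $E' \cdot R \ge 0$: if $E' \cdot R < 0$ then every $R$-curve lies in $E'$, forcing $E \subset E'$ and hence $E = E'$, which contradicts $E\cdot R' > 0$ while $E' \cdot R' < 0$. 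Now case-split $\varphi$ via Lemma~\ref{lem:bir:type}: either $\varphi$ contracts $E$ to a point, in which case $E$ is a Fano threefold with $\rho_E = 1$ and $-K_E$ ample, or $\varphi$ is the blow-up of a smooth fourfold along a smooth curve $W$, in which case $E \cong \P(N^\vee_{W/Y})$ is a $\P^2$-bundle with $-K_E$ relatively ample. In each subcase the formula $-K_E \cdot (E\cap F') = d(2-d)$, combined with the positivity of $-K_E$, the intersection numbers against a minimal $R$-curve $C \subset E$ (where $E\cdot C < 0$ is computable), and the constraint $E' \cdot R \ge 0$, should produce a numerical contradiction.

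The main obstacle is exactly this fourfold case: showing that the simultaneous existence of two birational extremal contractions satisfying $E\cdot R' > 0$ forces incompatible intersection data. This is the step where Yasutake's original argument developed a gap (see Remark~\ref{rem:Yasutake}), and closing it requires the explicit case-by-case verification just outlined, pivoting on the integer $d$ and the normal-bundle geometry of $E$ inside $X$.
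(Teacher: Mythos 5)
Your reduction to $n=4$ is sound and is a legitimate variant of the paper's route: you get $n=4$ from the fiber-dimension bound in Lemma~\ref{lem:E-negative:ray} applied to $\psi$, whereas the paper intersects a fiber $F_0$ of $\varphi|_E$ with a fiber $F_0'$ of $\psi|_{E'}$ and uses $\dim(F_0\cap F_0')\geq \dim F_0+\dim F_0'-n$ together with the fact that no curve can lie in both rays. Note that the paper's version buys strictly more: it forces $\dim F_0=\dim F_0'=n-2$, i.e.\ it also rules out the case $(\dim E,\dim W)=(n-1,0)$ for $\varphi$, which your argument leaves alive and which you then have to carry as an extra branch of your case analysis. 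The identification of $\psi$ as a smooth blow-up along a curve via \cite[Theorem~5.1]{AO02} matches the paper.

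The genuine gap is the endgame in dimension four. You write that the intersection numbers ``should produce a numerical contradiction'' and that closing the argument ``requires the explicit case-by-case verification just outlined'' --- but that verification is never performed, and it is exactly the point where Yasutake's original proof broke down (Remark~\ref{rem:Yasutake}). Worse, the numerics you set up do not obviously close: with $E|_{F'}\cong\cO_{\P^2}(d)$ your adjunction formula gives $-K_E\cdot(E\cap F')=d(2-d)$, so positivity of $-K_E$ on the relevant curves only forces $d=1$, in which case $-K_E\cdot(E\cap F')=1$; a smooth rational curve of anticanonical degree one on $E$ (a Fano threefold of index one, or a $\P^2$-bundle over a curve) is not a contradiction, so some further input is needed. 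The paper avoids this entirely: once both contractions are known to be blow-ups of smooth fourfolds along smooth curves, it invokes Tsukioka's classification \cite[Theorem~1]{Tsu10-2} to conclude that $X$ would have to be the blow-up of $\P^4$ along a complete intersection of a hyperplane and two quadrics, whose second birational contraction has a \emph{singular} target --- contradicting the smoothness of $Y$ and $Y'$ supplied by \cite{AO02}. You would need either to supply that citation (or an equivalent classification) or to genuinely complete the intersection-theoretic case analysis; as written, the statement is not proved.
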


\begin{proof} First assume that there exists an extremal ray $R''$ such that $E\cdot R''>0$ and the contraction associated to $R''$ is of fiber type. Then Proposition~\ref{prop:bir:fiber:n-1:0} and \ref{prop:bir:div:curve}
 imply that there is no birational contraction of an extremal ray other than $\varphi$. Hence, our assertion holds. Consequently, we may assume that $\psi$ is of birational type. We denote by $E'$ the exceptional divisor of $\psi$ and by $F'$ a 
fiber of $\psi|_{E'}: E'\to W':=\psi(E')$. By the same argument as in Lemma~\ref{lem:bir:type}, we see that $(\dim E', \dim \psi(E))=(n-1, 0)$ or $(n-1,1)$. By our assumption, $E\cdot R'>0$; this implies that $E\cap E'\neq \emptyset$. Then we may find a fiber $F_0$ of $\varphi|_{E}: E\to W$ and a fiber $F_0'$ of $\psi|_{E'}: E'\to W'
$ such that $F_0\cap F_0'\neq \emptyset$. Then we have 
$$
\dim F_0\cap F_0' \geq \dim F_0+\dim F_0'-\dim X.
$$ Remark that  $\dim F_0$ and $\dim F_0'$ are $n-1$ or $n-2$. If $\dim F_0$ or $\dim F_0'$ is $n-1$, then the above inequality tells us that $\dim F_0 \cap F_0'\geq 1$, because we assume that $n\geq 4$. This contradicts to $\varphi\neq \psi$. This yields that $\dim F_0=\dim F_0'=n-2$. The same argument shows that the only possible case is $(n, \dim F_0, \dim F_0')=(4, 2, 2)$. We assume this from now on. By Lemma~\ref{lem:bir:type}, we have $\ell(R')=2$; thus \cite[Theorem~5.1]{AO02} implies that $\psi: X \to Y'$ is the blow-up of $Y'$ along a smooth curve $W'\subset Y'$. Then, by \cite[Theorem~1]{Tsu10-2}, $X$ is isomorphic to the blow-up of $\P^4$ along a smooth, complete intersection of a hyperplane and two quadrics. Then, another contraction of $X$ is the blow-up of a singular variety (see \cite[2~An~example]{Tsu10-2}). This contradicts the fact that $Y$ and $Y'$ are smooth.
\end{proof}

As a consequence, we obtain Theorem~\ref{them:2}.

\begin{remark}\label{rem:Yasutake}\rm In the paper \cite{Yas14}, without any argument as Proposition~\ref{prop:E-negative:ray}, Yasutake claim that $\psi$ is of fiber type. This part of his proof needs to be clarified.
\end{remark} 

\if0
By the same argument as above, we may obtain a classification of Fano fourfolds of $\iota_X \geq 2$ and $\rho_X\geq 2$:  
\begin{theorem}[\cite{Yas14}]\label{them:bir:contr} \rm  Let $X$ be a smooth Fano fourfold of $\iota_X \geq 2$ and $\rho_X\geq 2$. If $X$ admits a birational elementary contraction, then $X$ is one of the following:
\begin{enumerate}
\item $\P(\cO_{\P^{3}}\oplus \cO_{\P^{3}}(-1))$;
\item $\P(\cO_{Q^{3}}\oplus \cO_{Q^{3}}(-1))$;
\item $\P(\cO_{\P^{3}}\oplus \cO_{\P^{3}}(-2))$;
\item the blow-up of $\P^4$ along a line;
\item the blow-up of $Q^4$ along a line;
\item the blow-up of $Q^4$ along a conic which is not on a plane contained in $Q^4$;
\item $\P^1\times \P(\cO_{\P^{2}}\oplus\cO_{\P^{2}}(-1))$.
\end{enumerate}
\end{theorem}
 \fi
 
\section{Fano varieties of $\tau_X\geq \dim X-2$ and $\rho_X\geq 2$} 
As explained in the introduction, Theorem~\ref{them:fano:collect}~(iii), (iv), (v) imply the following.
\begin{theorem}\label{them:3''} \rm  Let $X$ be a smooth Fano variety of dimension $n \geq 5$. If $\iota_X\geq n-2$ and $\rho_X\geq 2$, then $X$ is isomorphic to one of the following:
\begin{enumerate}
\item $\P^3\times \P^3$;
\item $ \P(\cO_{\P^{3}}^{\oplus 2}\oplus\cO_{\P^{3}}(1))$;
\item $\P^3\times \P^2$;
\item $Q^3\times \P^2$; or
\item $\P(T_{\P^3})$.
\end{enumerate}
\end{theorem}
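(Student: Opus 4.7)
The plan is to reduce the claim to a direct case analysis by comparing $n-2$ with the thresholds $\frac{n+1}{2}$ and $\frac{n}{2}+1$ appearing in Theorem~\ref{them:fano:collect}, and then invoking parts (iii), (iv), (v) as appropriate. Since $\rho_X\geq 2$ is part of the hypothesis, any application of (iii) yields a contradiction, so the useful information is the dichotomy of (iv) and (v).

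\textbf{Step 1 (kill large $n$).} First I would observe that $n-2 > \frac{n}{2}+1$ precisely when $n\geq 7$. Therefore, for $n\geq 7$ the hypothesis $\iota_X\geq n-2$ forces $\iota_X > \frac{n}{2}+1$, so Theorem~\ref{them:fano:collect}(iii) gives $\rho_X=1$, contradicting $\rho_X\geq 2$. Hence we may assume $n\in\{5,6\}$.

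\textbf{Step 2 (the case $n=6$).} Here $\iota_X\geq 4=\frac{n}{2}+1$. If $\iota_X\geq 5$ then again (iii) applies and we contradict $\rho_X\geq 2$. Therefore $\iota_X=4=\frac{n}{2}+1$, and Theorem~\ref{them:fano:collect}(iv) (with $\iota_X-1=3$) forces $X\cong\P^3\times\P^3$, which is case (i).

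\textbf{Step 3 (the case $n=5$).} Here $\iota_X\geq 3=\frac{n+1}{2}$. Any $\iota_X\geq 4$ exceeds $\frac{n}{2}+1=\tfrac{7}{2}$ and is again killed by (iii). Hence $\iota_X=3$, and Theorem~\ref{them:fano:collect}(v) gives exactly four possibilities, which I identify with cases (ii)--(v) of the present statement. Three of them match verbatim, namely $\P^2\times Q^3$, $\P(T_{\P^3})$, and $\P^2\times\P^3$. For the remaining one I would rewrite $\P(\cO_{\P^{3}}(2)\oplus\cO_{\P^{3}}(1)^{\oplus 2})$ by twisting the bundle by $\cO_{\P^3}(-1)$, which is a projectivization-preserving operation, to obtain $\P(\cO_{\P^3}(1)\oplus\cO_{\P^3}^{\oplus 2})\cong \P(\cO_{\P^3}^{\oplus 2}\oplus\cO_{\P^3}(1))$, matching case (ii).

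There is essentially no hard step: the whole argument is just a numerical comparison of $n-2$ with the cutoffs in Theorem~\ref{them:fano:collect}, followed by a bookkeeping identification of one of the explicit bundles. The only point requiring care is the twist-by-line-bundle identification of case (ii), which must be flagged so that the lists match verbatim.
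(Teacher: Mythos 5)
Your proposal is correct and is essentially the paper's own argument: the paper derives Theorem~\ref{them:3''} directly from Theorem~\ref{them:fano:collect}~(iii), (iv), (v) by exactly this comparison of $n-2$ with the thresholds $\frac{n}{2}+1$ and $\frac{n+1}{2}$ (noting $n-2>\frac{n}{2}+1$ for $n\geq 7$, equality with $\frac{n}{2}+1$ at $n=6$, and equality with $\frac{n+1}{2}$ at $n=5$). The twist identification $\P(\cO_{\P^{3}}(2)\oplus\cO_{\P^{3}}(1)^{\oplus 2})\cong\P(\cO_{\P^{3}}^{\oplus 2}\oplus\cO_{\P^{3}}(1))$ is the right bookkeeping step and matches the paper's list.
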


In this section, we give a complete classification of smooth Fano fourfolds of $\tau_X\geq 2$ and $\rho_X\geq 2$:
\begin{theorem}\label{them:3'} \rm  Let $X$ be a smooth Fano fourfold. If $\iota_X\geq 2$ and $\rho_X\geq 2$, then $X$ is isomorphic to one of the following:
\begin{enumerate}
\item $\P(\cO_{\P^{3}}\oplus \cO_{\P^{3}}(1))$; 
\item $\P(\cO_{\P^{2}}^{\oplus 2}\oplus\cO_{\P^{2}}(1))$;;
\item a divisor on $\P^2\times \P^3$ of bidegree $(1, 1)$;
\item $\P^2\times \P^2$;
\item {$\P^1\times Q^3$; or}
\item a Fano fourfold of $i_X=2$.
\end{enumerate}
\end{theorem}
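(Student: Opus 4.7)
The plan is to translate the hypothesis via Lemma~\ref{lem:tau:iota:4}: for a smooth Fano fourfold, $\iota_X \geq 2$ is equivalent to $\tau_X \geq 2 = n-2$, so Theorems~\ref{them:1} and~\ref{them:2} apply directly. The proof then splits along the dichotomy between birational and fiber-type elementary contractions.

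In the birational case, Theorem~\ref{them:2} with $n = 4$ forces $X$ to be one of seven explicit varieties. Inspecting Examples~\ref{eg:bir:1}--\ref{eg:PtimesBl:blup} (summarised in Table~1), I would check that $\P(\cO_{\P^3}\oplus \cO_{\P^3}(1))$ is case~(i), that the blow-up of $\P^4$ along a line is isomorphic to $\P(\cO_{\P^2}^{\oplus 2}\oplus \cO_{\P^2}(1))$ and hence is case~(ii), while each of $\P(\cO_{Q^3}\oplus \cO_{Q^3}(1))$, $\P(\cO_{\P^3}\oplus \cO_{\P^3}(2))$, the blow-up of $Q^4$ along a line, the blow-up of $Q^4$ along a conic not lying on a plane of $Q^4$, and $\P^1 \times \P(\cO_{\P^2}\oplus \cO_{\P^2}(1))$ has Fano index $i_X = 2$, hence is absorbed into case~(vi).

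Now assume every elementary contraction of $X$ is of fiber type. By Theorem~\ref{them:1} with $n = 4$, each is either a smooth morphism, a non-equidimensional $\P^1$-fibration, or an equidimensional $Q^2$-fibration. If some extremal ray of $X$ contracts onto $\P^1$, Proposition~\ref{prop:charact:P1:product} gives $X \cong \P^1 \times Z$ for a smooth projective $3$-fold $Z$; Lemma~\ref{lem:cone} then shows that $Z$ is Fano with $\iota_Z \geq 2$ and that its elementary contractions are of fiber type. Splitting on $\rho_Z$, and using Theorem~\ref{them:fano:collect} for $\rho_Z = 1$ together with the Mori--Mukai list of Fano $3$-folds with pseudoindex $\geq 2$ and $\rho \geq 2$, I would identify $Z$; the only candidate leaving $i_X = 1$ is $Z = Q^3$, giving case~(v), while every other outcome satisfies $i_X = 2$ and is covered by case~(vi).

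It remains to treat the case in which no extremal ray of $X$ contracts onto a curve. Here $X$ admits at least two independent extremal contractions whose targets have dimension $\geq 2$, each being a smooth morphism, an inequidimensional $\P^1$-fibration to a $3$-fold, or an equidimensional $Q^2$-fibration to a surface. Combining the unsplit families of minimal extremal rational curves supplied by the two rays with Proposition~\ref{prop:Ion:Wis:2} and the $\cM$-equivalence technique used in the proof of Proposition~\ref{prop:charact:P1:product}, I would show that a pair of smooth $\P^2$-bundles forces $X \cong \P^2 \times \P^2$ (case~(iv)), that a pair consisting of an inequidimensional $\P^1$-fibration and a smooth $\P^2$-bundle produces the divisor of bidegree $(1,1)$ on $\P^2 \times \P^3$ (case~(iii), cf.\ Example~\ref{eg:(1,1)}), and that in every remaining configuration one forces $i_X = 2$, landing in case~(vi). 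The main obstacle I anticipate is precisely this last step: excluding exotic pairings of fiber-type contractions under the constraint $i_X = 1$ will demand the sharpest use of the bound $\tau_X \geq n-2$ on non-free rational curves, applied to the universal families of both rays simultaneously.
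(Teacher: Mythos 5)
Your overall architecture matches the paper's: reduce $\iota_X\geq 2$ to $\tau_X\geq 2$ via Lemma~\ref{lem:tau:iota:4}, dispose of the birational case by Theorem~\ref{them:2} together with the index computations of Section~4, split off products with $\P^1$ via Proposition~\ref{prop:charact:P1:product}, and then analyse the remaining fiber-type configurations allowed by Theorem~\ref{them:1}. The gap is in that last step, which is where essentially all of the paper's work lies (Propositions~\ref{prop:cont:class}--\ref{prop:4fold:Q2fib}). Your assertion that ``in every remaining configuration one forces $i_X=2$'' using only Proposition~\ref{prop:Ion:Wis:2} and the $\cM$-equivalence technique is the statement to be proved, not a proof, and the tools you name do not reach it. Concretely: (a) if $X$ carries a smooth $\P^1$-bundle structure $X\to Y$, one must first prove $\rho_X=2$ (the paper does this via Druel's classification of the possible Fano threefold bases with $\rho_Y\geq 2$ and a uniform-vector-bundle argument excluding $\P^1\times\P(T_{\P^2})$), then identify $Y$ as $\P^3$ or $Q^3$, invoke the classification of rank-two Fano bundles on these, and finally compute $\iota=1$ for $\P(\pi^{\ast}\cN)$ over $Q^3$ to discard it; only then does one land on $\P(\cN)$ with $i_X=2$. (b) When both rays give non-smooth fibrations, the paper needs Kachi's structure theorem (the isolated two-dimensional fibers are $\P^2$ with normal bundle $\Omega_{\P^2}(1)$), a blow-up resolving the fibration to a $\P^1$-bundle, a Chow-scheme quotient argument to prove $\rho_X=2$, and a family of minimal sections to force $i_X=2$; none of this follows from the Ionescu--Wi\'sniewski inequality alone. (c) The $(1,1)$ divisor on $\P^2\times\P^3$ is obtained from the $\P^2$-bundle case via the classification of rank-three Fano bundles on $\P^2$, not by pairing the two fibrations directly.

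A second, smaller defect: in the $\P^1$-product branch your claim that ``the only candidate leaving $i_X=1$ is $Z=Q^3$'' is false as stated. By Theorem~\ref{them:fano:collect}~(v), $Z=\P^1\times\P^2$ is a Fano threefold with $\iota_Z=2$ and $\rho_Z=2$, and $X=\P^1\times\P^1\times\P^2$ has $-K_X$ of multidegree $(2,2,3)$, hence $i_X=1$ while $\iota_X=2$. This variety is not among cases (i)--(v) and does not fall under (vi); it appears to be missing from the paper's own statement and proof as well, so you have reproduced rather than introduced the problem, but a complete argument must either place this variety in the classification or explain why it is excluded.
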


Let us first remark the following:

\begin{lemma}\label{lem:tau:iota:4}\rm For a smooth Fano fourfold, $\tau_X\geq 2$ if and only if $\iota_X\geq 2$. 
\end{lemma}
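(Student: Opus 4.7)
The plan is to prove the two implications separately, exploiting the fact that on a Fano fourfold the pseudoindex must be at least one.

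One direction is immediate from Remark~\ref{rem:inv}: by definition $\tau_X\geq \iota_X$, so $\iota_X\geq 2$ trivially yields $\tau_X\geq 2$. Only the reverse implication requires work, and the natural strategy is to prove its contrapositive: if $\iota_X=1$ (the only value in $\{1,2,3,4,5\}$ strictly less than $2$, since $-K_X$ is ample), then $\tau_X=1$.

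So suppose $\iota_X=1$ and choose a rational curve $C\subset X$ with $-K_X\cdot C=1$. Let $f:\P^1\to C\subset X$ be the normalization; then $\deg f^{\ast}(-K_X)=1$, and since $f$ is non-constant, the differential $df$ gives an injection of sheaves $T_{\P^1}=\cO_{\P^1}(2)\hookrightarrow f^{\ast}T_X$. Now I would argue that $f$ cannot be free: if it were, then $f^{\ast}T_X$ would be a nef vector bundle on $\P^1$ of rank $4$ and degree $1$, hence decomposes as $\cO_{\P^1}(a_1)\oplus\cdots\oplus\cO_{\P^1}(a_4)$ with $a_i\geq 0$ and $\sum a_i=1$. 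In such a bundle the largest degree of any line subsheaf is $\max a_i=1<2$, which contradicts the existence of the subsheaf $\cO_{\P^1}(2)\hookrightarrow f^{\ast}T_X$ coming from $df$. Hence $[f]\in {\rm Hom}^{\rm nf}(\P^1,X)$, and $\tau_X\leq \deg f^{\ast}(-K_X)=1$, giving $\tau_X=1<2$.

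The argument is essentially a one-line degree count, so no real obstacle is expected; the only thing to be careful about is that the definition of $\tau_X$ is in terms of morphisms from $\P^1$ (not images), so one must pass to the normalization of $C$ and verify that $\deg f^{\ast}(-K_X)=-K_X\cdot C$, which is automatic since $f$ is birational onto its image.
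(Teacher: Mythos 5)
Your proof is correct and takes essentially the same route as the paper: the ``if'' direction is the trivial inequality $\tau_X\geq \iota_X$, and the ``only if'' direction reduces to the standard fact (which the paper simply cites from Koll\'ar, II~Lemma~3.13) that a free rational curve has anticanonical degree at least $2$. Your splitting-type computation on $f^{\ast}T_X$ is just a spelled-out proof of that cited fact, so there is nothing to add.
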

\begin{proof} The “if” part is trivial, and the “only if” part is immediate from \cite[II~Lemma~3.13]{Kb}.
\end{proof} 

According to Theorem~\ref{them:2}, we only need to deal with the case when any elementary contraction is of fiber type. Moreover if $X$ admits a contraction onto $\P^1$, Proposition~\ref{prop:charact:P1:product} implies that $X$ is isomorphic to the product of $\P^1$ and a Fano threefold $Z$ of $\iota_Z\geq 2$. Thus, throughout this section, we always assume the following:

\begin{assumption}\label{ass:4-fold:tau:2} \rm Let $X$ be a smooth Fano fourfold of $\tau_X\geq 2$ and $\rho_X\geq 2$. Assume that any elementary contraction of $X$ is of fiber type and $X$ does not admit a contraction onto $\P^1$. 
\end{assumption}

\begin{proposition}\label{prop:cont:class}\rm Under the Assumption~\ref{ass:4-fold:tau:2}, one of the following holds:  
\begin{enumerate}
\item $X$ admits a $\P^d$-bundle structure ($d=1, 2$);
\item $X$ admits two $\P^{1}$-fibrations whose special fibers are two-dimensional; 
\item $X$ admits an $\P^{1}$-fibration whose special fibers are two-dimensional and an equidimensional $Q^{2}$-fibration; 
\item $X$ admits two equidimensional $Q^{2}$-fibrations.
\end{enumerate}  
\end{proposition}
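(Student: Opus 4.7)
The plan is to invoke Theorem~\ref{them:1} for each of the elementary contractions of $X$ and then to use the extra hypotheses of Assumption~\ref{ass:4-fold:tau:2} to narrow down the remaining possibilities.

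Since $\rho_X\geq 2$, the cone $\overline{\NE}(X)$ has at least two extremal rays, and I would choose two of them, producing elementary contractions $\varphi_i\colon X\to Y_i$ $(i=1,2)$. By Assumption~\ref{ass:4-fold:tau:2} both $\varphi_i$ are of fiber type, so Lemma~\ref{lem:tau:iota:4} gives $\tau_X\geq 2=\dim X-2$ and Theorem~\ref{them:1} forces each $\varphi_i$ to be (a) a smooth morphism, (b) an inequidimensional $\P^1$-fibration, or (c) an equidimensional $Q^2$-fibration.

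Next I would refine case (a). As $\varphi_i$ is of fiber type, $1\leq \dim Y_i\leq 3$. The case $\dim Y_i=1$ is ruled out by Assumption~\ref{ass:4-fold:tau:2}, because $X$ being rationally connected forces $Y_i\cong \P^1$. If $\dim Y_i=3$ then the fibers are smooth rational curves and $\varphi_i$ is a $\P^1$-bundle. If $\dim Y_i=2$ then every fiber $F$ is a smooth Fano surface with $-K_F=(-K_X)|_F$, whence $\iota_F\geq \iota_X\geq 2$; a smooth Fano surface of pseudoindex at least two is isomorphic to $\P^2$ or $Q^2$, and in either subcase standard rigidity of projective fibrations makes $\varphi_i$ Zariski-locally trivial, i.e.\ a $\P^2$-bundle or an equidimensional $Q^2$-fibration.

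Assembling the outcomes, every elementary contraction of $X$ is a $\P^d$-bundle with $d\in\{1,2\}$, an inequidimensional $\P^1$-fibration, or an equidimensional $Q^2$-fibration. If at least one $\varphi_i$ is a $\P^d$-bundle we are in case (i); otherwise both $\varphi_1$ and $\varphi_2$ belong to the other two families, and the three possible combinations yield exactly cases (ii), (iii) and (iv). The most delicate step is promoting a smooth morphism with every fiber isomorphic to $\P^2$ to a Zariski-locally trivial $\P^2$-bundle; for this I would invoke rigidity of projective spaces in smooth families (\cite{Siu89}, \cite{HM98}), in the same spirit as in the proof of Proposition~\ref{prop:bir:fiber:n-1:0}.
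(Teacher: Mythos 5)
Your proof is correct and follows the same overall skeleton as the paper's (classify each elementary contraction, then assemble), but the treatment of the smooth case is genuinely different. Where you need to identify the fibers of a smooth elementary contraction with $\dim Y_i=2$, the paper quotes \cite[Example~3.8]{casa08} to get $\rho_F=1$, so the fiber is forced to be $\P^2$ and a smooth contraction with three-dimensional fibers is excluded because $Y\cong\P^1$; you instead use $\iota_F\geq\iota_X\geq 2$, which only pins the fiber down to $\P^2$ or $Q^2\cong\P^1\times\P^1$. This is harmless: a smooth morphism all of whose fibers are $Q^2$ is in particular an equidimensional $Q^2$-fibration, so it is absorbed into cases (ii)--(iv) in your assembly step, and the statement still comes out. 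Your pseudoindex argument is arguably more self-contained, at the cost of a slightly coarser dichotomy. Two small remarks: the ``delicate'' rigidity step you flag is not needed, since the paper's convention (see the Notation and Conventions) defines an $F$-bundle simply as a smooth morphism all of whose fibers are isomorphic to $F$, so no Zariski-local triviality is required; and when you match ``inequidimensional $\P^1$-fibration'' with ``$\P^1$-fibration whose special fibers are two-dimensional,'' you should cite the final clause of Theorem~\ref{them:1} (fibers have codimension at least two, which applies because Assumption~\ref{ass:4-fold:tau:2} excludes a contraction onto $\P^1$) to bound the special fibers above by dimension two.
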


\begin{proof} Let us first assume that a smooth elementary contraction $\varphi: X \to Y$ exists. Let $F$ be a fiber of $\varphi$; by \cite[Example~3.8]{casa08}, we see that the the Picard number $\rho_F$ is one; thus $F$ is a Fano variety of $\rho_F=1$. If $\dim F=1$ or $2$, then $\varphi: X \to Y$ is a $\P^d$-bundle ($d=1, 2$). So we assume that $\dim F=3$; in this case, $Y$ is isomorphic to $\P^1$; this is a contradiction.
So, we may assume that any elementary contraction is not smooth. Then, our assertion follows from Theorem~\ref{them:1}. 
\end{proof}

\if0
\begin{lemma}\label{lem:4-fold:rho4} \rm Under the assumption of~\ref{ass:4-fold:tau:2}, $\rho_X\leq 4$, with equality only if $X$ is isomorphic to $\P^1\times \P^1\times \P^1\times \P^1$.   
\end{lemma} 

\begin{proof} The desired inequality follows from \cite[Theorem~2.2]{Wis91}; if the equality holds, then any elementary contraction of $X$ is a $\P^1$-bundle. By \cite[Theorem~1.2]{OSWW}, $X$ is isomorphic to $\P^1\times \P^1\times \P^1\times \P^1$ (see also \cite[Theorem~A.1]{OSWi}).
\end{proof}
\fi

\begin{example}\label{eg:null} Let $\cN$ be the null correlation bundle on $\P^3$, which is defined as the kernel of a surjective map $T_{\P^3}(-1)\to \cO_{\P^3}(1)$ (see for instance \cite[Section~4.2]{OSS}). Let $\cS$ be the spinor bundle on $Q^3$, which is, by definition, the restriction to $Q^3$ of the universal quotient bundle of the Grassmannian $G(1, \P^3)$ (see for instance \cite{Ott88}). Then $\P(\cN)$ is isomorphic to $\P(\cS)$ and we have the following diagram:
\[
  \xymatrix{
    & \P(\cN)\cong \P(\cS)\ar[dl]_{p_1} \ar[dr]^{p_2} &  \\
   \P^3  & & Q^3 }
\]
The variety $\P(\cN)\cong \P(\cS)$ is a Fano fourfold of index two. 
\end{example}

\begin{example}\label{eg:null:pull} We continue to follow the notation in Example~\ref{eg:null}. If we take a suitable double cover $\pi: Q^3\to \P^3$ whose branch divisor $B$ is a quadric, then $X=\P(\pi^{\ast}\cN)$ is a Fano fourfold of index one (see \cite[Corollary]{SSW}). Remark that $X\cong Q^3\times_{\P^3}\P(\cN)$ satisfying the following commutative diagram:
\[
  \xymatrix{
    & X\cong Q^3\times_{\P^3}\P(\cN)\ar[dl]_{q_1} \ar[dr]^{q_2} \ar[dd]_{f} &  \\
   Q^3 \ar[dr]_{\pi}  & &\P(\cN) \ar[dl]^{p_1}\\
       & \P^3 &  }
\]
where $q_1: X\to Q^3$ and $q_2: X\to \P(\cN)$ are natural projections and $f$ is the composition $ \pi\circ q_1$. 
 We prove that $\iota_X=1$. To prove this, let $\ell$ be a line on $Q^3$ such that $\pi(\ell)\subset \P^3$ is also a line. By replacing $\ell$ if necessary, we may assume that there exists a fiber $\ell_2$ of $p_2$ which is isomorphic to $\pi(\ell)$ via $p_1$. For a rational curve $\ell_X:=\ell\times_{\P^3}\ell_2\hookrightarrow X$, 
\begin{eqnarray}
-K_X\cdot \ell_X &=&f^{\ast}K_{\P^3}\cdot \ell_X+{q_1}^{\ast}(-K_{Q^3})\cdot \ell_X+{q_2}^{\ast}(-K_{\P(\cN)})\cdot \ell_X \nonumber \\
&=&K_{\P^3}\cdot \pi(\ell)+(-K_{Q^3})\cdot \ell+(-K_{\P(\cN)})\cdot \ell_2 \nonumber \\
&=& -4+3+2=1 \nonumber 
\end{eqnarray} 
Thus, we see that $\iota_X=1$.
\end{example}

\begin{proposition}\label{prop:P1bundle} \rm Under the assumption of~\ref{ass:4-fold:tau:2}, assume that $X$ admits a $\P^1$-bundle structure $\varphi: X\to Y$. Then $X$ is isomorphic to $\P(\cN)$, where $\cN$ is the null correlation bundle over $\P^3$.
\end{proposition}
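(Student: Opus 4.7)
Write $X \cong \P(E)$ for some rank-two vector bundle $E$ on the smooth projective threefold $Y$; the goal is to identify $Y \cong \P^3$ and $E \cong \cN$.

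The first step is to show that $Y$ is a smooth Fano threefold of pseudoindex at least two. The $\varphi$-fiber has anticanonical degree two, so $\iota_X = 2$. For any rational curve $C \subset Y$, decompose $E|_C \cong \cO(a) \oplus \cO(b)$ with $a \leq b$; the minimal section $\tilde C$ of $\varphi^{-1}(C) \to C$ (corresponding to the quotient $E|_C \twoheadrightarrow \cO(a)$) satisfies
\[
-K_X \cdot \tilde C \;=\; -K_Y \cdot C + a - b \;\geq\; 2,
\]
so $-K_Y \cdot C \geq 2 + (b-a) \geq 2$. Since $X$ is rationally connected, so is $Y$, and thus $\overline{\NE}(Y)$ is generated by rational curves; Kleiman's criterion then yields that $Y$ is Fano with $\iota_Y \geq 2$.

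Next, since $\rho_X = \rho_Y + 1 \geq 2$, there is a second extremal ray $R'$ of $\overline{\NE}(X)$, whose contraction $\psi \colon X \to Z$ is of fiber type with $\dim Z \geq 2$ by Assumption~\ref{ass:4-fold:tau:2}. By Theorem~\ref{them:1}, $\psi$ is either smooth, an inequidimensional $\P^1$-fibration, or an equidimensional $Q^2$-fibration. I would rule out the latter two by a compatibility argument with $\varphi$: the $\P^1$-bundle structure of $\varphi$ forces the special loci of $\psi$ to meet a general $\varphi$-fiber in a controlled way, and combined with $\iota_X = 2$ this excludes inequidimensional fibrations and singular $Q^2$-fibers. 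A parallel argument excludes the $\P^2$-bundle case (a $\P^2$-fiber of $\psi$ would map generically finitely under $\varphi$ to a surface in the threefold $Y$, which is incompatible with the numerical constraints imposed by $\iota_Y \geq 2$ and $\iota_X = 2$). Hence $\psi$ is also a smooth $\P^1$-bundle onto a smooth Fano threefold $Z$ with $\iota_Z \geq 2$; in particular $\rho_X = 2$ and $\rho_Y = \rho_Z = 1$.

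Finally, with two smooth $\P^1$-bundle structures on $X$, the bases $Y$ and $Z$ are among $\P^3$, $Q^3$, or del Pezzo threefolds of Picard number one by Theorem~\ref{them:fano:collect}. The classification of rank-two Fano bundles on these threefolds (Szurek--Wiśniewski) together with the fact that $\P(E)$ admits a second $\P^1$-bundle structure singles out the unique example with two such structures, namely $E \cong \cN$ on $Y = \P^3$ (equivalently $E \cong \cS$ on $Z = Q^3$ via the isomorphism $\P(\cN) \cong \P(\cS)$ recalled in Example~\ref{eg:null}). Hence $X \cong \P(\cN)$. The main obstacle is the second step, and in particular excluding the equidimensional $Q^2$-fibration case: this requires a detailed intersection-theoretic analysis of how fibers of $\varphi$ meet fibers of $\psi$, combined with the Fano constraint on the bundle $E$.
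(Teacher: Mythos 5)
Your proposal has genuine gaps at its two load-bearing steps. First, you never actually prove $\rho_X=2$: you present it as a consequence of the second contraction $\psi$ being a smooth $\P^1$-bundle, but two elementary $\P^1$-bundle structures on $X$ do not force $\rho_X=2$ (a priori one could have $\rho_X=3$ with $\rho_Y=\rho_Z=2$). In the paper this is the longest part of the proof: assuming $\rho_X\geq 3$, one gets that $Y$ is a Fano threefold with $\rho_Y\geq 2$ and $\iota_Y\geq 2$, invokes Druel's classification to reduce to $Y\cong\P(T_{\P^2})$, and then runs a Grauert/uniform-bundle argument (via \cite{VdV72}) to show $\cE$ would be a pullback of $\cO_{\P^2}^{\oplus 2}$, forcing $X\cong\P^1\times\P(T_{\P^2})$ and contradicting the standing assumption that $X$ has no contraction onto $\P^1$. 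Second, the step you yourself flag as ``the main obstacle'' --- excluding the inequidimensional $\P^1$-fibration, the equidimensional $Q^2$-fibration, and the $\P^2$-bundle as the second contraction --- is only announced (``a compatibility argument with $\varphi$''), not carried out. That is exactly where the mathematical content would have to live, and it is telling that the paper's proof sidesteps it entirely: it never determines the type of the second contraction, but instead classifies the base $Y$ of the given $\P^1$-bundle (via $\rho_Y=1$, $\iota_Y\geq 2$ and \cite[Lemma~2.16]{NO07}, giving $Y\cong\P^3$ or $Q^3$) and then applies the Szurek--Wi\'sniewski classification of rank-two Fano bundles, discarding $\P(\pi^{\ast}\cN)$ because its pseudoindex is $1$ (Example~\ref{eg:null:pull}).

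Two smaller points. Your list of candidate bases includes del Pezzo threefolds of Picard number one; these do have $\iota=2$, so your step 1 does not exclude them, yet the Szurek--Wi\'sniewski classification you invoke covers only $\P^3$ and $Q^3$ --- you would need the additional input the paper takes from \cite{NO07}. Also, writing $X\cong\P(E)$ at the outset requires the vanishing of the Brauer group of $Y$ (the paper justifies this by rationality of $Y$ once $Y$ is pinned down); it is not automatic for a $\P^1$-bundle in the analytic/\'etale sense. Your opening computation $-K_X\cdot\tilde C=-K_Y\cdot C+a-b\geq 2$ giving $\iota_Y\geq 2$ is correct and is essentially the cited \cite[Lemma~2.5]{BCDD03}, but the rest of the argument would need to be rebuilt along the lines above.
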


\begin{proof} Let us first claim that $\rho_X=2$. To prove this, assume the contrary, that is, $\rho_X\geq 3$. Then it follows from \cite[Corollary~2.9]{KMM92} that $Y$ is a smooth Fano threefold of $\rho_Y\geq 2$. Applying \cite[Lemma~4.2, Lemma~4.4 and Theorem~6.3]{Dru16} (remark that $\P^1\times \P^1\times \P^1$ is missing in the list of \cite[Theorem~6.3]{Dru16}), we see that $Y$ is one of the following:
\begin{enumerate}
\renewcommand{\labelenumi}{(\arabic{enumi})}
\item A double cover of $W=\P(T_{\P^2})$ whose branch locus is a divisor in $|-K_W|$.
\item A divisor on $\P^2\times \P^2$ of bidegree $(1, 1)$, which is isomorphic to $\P(T_{\P^2})$.
\item A divisor on $\P^2\times \P^2$ of bidegree $(1, 2)$.
\item A divisor on $\P^2\times \P^2$ of bidegree $(2, 2)$.
\end{enumerate} 
On the other hand, it follows from \cite[Lemma~2.5]{BCDD03} that $\iota_Y \geq \iota_X \geq2$. Since the pseudoindex of the Fano variety appearing in $(1)$, $(3)$, and $(4)$ is one (for variety in $(1)$, see \cite[p. 53, $n^o$~6~(6.B)]{Matsu95}), we are in the case $(2)$, that is, $Y$ is isomorphic to $\P(T_{\P^2})$. Let $Y$ be $\P(T_{\P^2})$. Since $Y$ is rational, its Brauer group vanishes. This yields that there exists a rank two vector bundle $\cE$ on $Y$ such that $X=\P(\cE)$ and $\varphi$ is the natural projection $X=\P(\cE)\to Y$. Remark that $\iota_X = \iota_Y =2$. Since $\P(T_{\P^2})$ is isomorphic to a divisor on $\P^2\times \P^2$ of bidegree $(1, 1)$, there exist two projections $p_i: \P(T_{\P^2}) \to \P^2$ for $i=1, 2$ as follows:
\[
  \xymatrix{
    & \P(T_{\P^2})\ar[dl]_{p_1} \ar[dr]^{p_2} &  \\
   \P^2  & & \P^2 }
\]
In this diagram, the projective plane $\P^2$ on the left can be seen as a unique family of lines on the projective plane $\P^2$ on the right; then, this diagram is nothing but the universal family of lines. 
 For any fiber $\ell_i$ of $p_i$, \cite[Lemma~2.5]{BCDD03} tells us that $\cE|_{\ell_i}\cong \cO_{\P^1}(a_i)^{\oplus 2}$ for some $a_i \in \Z$ $(i=1,2)$. Tensoring a line bundle, we may assume that $\cE|_{\ell_i}\cong \cO_{\P^1}^{\oplus 2}$ for any fiber $\ell_i$ of $p_i$. Then it follows from Grauert's theorem and Nakayama's lemma that ${p_2}_{\ast}\cE$ is a rank $2$ vector bundle on $\P^2$ and ${p_2}^{\ast}({p_2}_{\ast}\cE)\cong \cE$. Moreover, we have 
 $$
 ({p_2}_{\ast}\cE)|_{p_2(\ell_1)}\cong {p_2}^{\ast}({p_2}_{\ast}\cE)|_{\ell_1}\cong \cE|_{\ell_1}\cong \cO_{\P^1}^{\oplus 2}.
 $$
 This means that ${p_2}_{\ast}\cE$ is a uniform vector bundle on $\P^2$; then \cite{VdV72} implies that ${p_2}_{\ast}\cE\cong \cO_{\P^2}^{\oplus 2}$. As a consequence, $X$ is isomorphic to $\P^1\times \P(T_{\P^2})$, which contradicts to our assumption. Thus, we see that $\rho_X=2$.  
 
Since $Y$ is a Fano threefold of $\rho_Y=1$ and $\iota_Y\geq 2$, \cite[Lemma~2.16]{NO07} implies that $Y$ is $\P^3$ or $Q^3$. Since $Y$ is rational, there exists a rank two vector bundle $\cE$ on $Y$ such that $X=\P(\cE)$. From the classification of rank two Fano bundles over $\P^3$ \cite[Theorem~2.1]{SW90} and that of $Q^3$ \cite[Theorem]{SSW91}, $X$ is isomorphic to one of the following:
\begin{enumerate}  
\item $\P(\cN)$, where $\cN$ is the null correlation bundle over $\P^3$;
\item $\P(\cS)$, where $\cS$ is the spinor bundle over $Q^3$;
\item $\P({\pi}^{\ast}\cN)$, where $\cN$ is the null correlation bundle over $\P^3$ and $\pi: Q^3\to \P^3$ is a double cover.
\end{enumerate}  
By Example~\ref{eg:null} and Example~\ref{eg:null:pull}, $X$ is isomorphic to $\P(\cN)\cong \P(\cS)$.
\end{proof}

\begin{proposition}\label{prop:P2bundle} \rm Under the assumption of~\ref{ass:4-fold:tau:2}, assume that $X$ admits a $\P^2$-bundle structure $\varphi: X\to Y$. Then $X$ is isomorphic to one of the following:
\begin{enumerate}
\item $\P^2\times \P^2$;
\item a divisor on $\P^2\times \P^3$ of bidegree $(1, 1)$;
\end{enumerate}
\end{proposition}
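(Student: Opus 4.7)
The plan is to first identify the base $Y$, then analyze the second extremal contraction $\psi: X \to Z$ via Proposition~\ref{prop:cont:class}, and in each case recognize $X$. To identify $Y$: since $\varphi$ is smooth and $X$ is Fano, $Y$ is a smooth Fano surface with $\rho_Y = \rho_X - 1$, and by \cite[Lemma~2.5]{BCDD03} one has $\iota_Y \geq \iota_X \geq 2$. Among del Pezzo surfaces only $\P^2$ and $\P^1 \times \P^1$ satisfy this; the latter is excluded because composing $\varphi$ with a factor projection would yield a contraction $X \to \P^1$, contradicting Assumption~\ref{ass:4-fold:tau:2}. Hence $Y \cong \P^2$, $\rho_X = 2$, and since $\P^2$ has trivial Brauer group we may write $X = \P(\cE)$ for some rank three vector bundle $\cE$ on $\P^2$. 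Let $\psi: X \to Z$ be the contraction of the remaining extremal ray $R'$. By Assumption~\ref{ass:4-fold:tau:2} and Proposition~\ref{prop:cont:class}, $\psi$ is a $\P^d$-bundle ($d = 1, 2$), an inequidimensional $\P^1$-fibration, or an equidimensional $Q^2$-fibration; the $\P^1$-bundle case is excluded by Proposition~\ref{prop:P1bundle}, since $\P(\cN)$, the only candidate fourfold with a $\P^1$-bundle structure in this regime, carries no $\P^2$-bundle structure.

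When $\psi$ is a $\P^2$-bundle, applying the same argument to $\psi$ gives $Z \cong \P^2$, and $f = \varphi \times \psi : X \to \P^2 \times \P^2$ is defined. Since $R \cap R' = \{0\}$ no curve is contracted by both $\varphi$ and $\psi$, so $f$ is finite; being a morphism between smooth fourfolds of the same dimension, it is surjective. I would then show $\deg f = 1$ by comparing $-K_X = 3\xi_\varphi + \varphi^*\bigl((3 - c_1(\cE))H\bigr)$ with $f^*(-K_{\P^2 \times \P^2}) = 3 \varphi^* H + 3 \psi^* H$ in $\Pic(X)$, which forces the ramification divisor of $f$ to vanish; simple connectivity of $\P^2 \times \P^2$ then implies $f$ is an isomorphism by Zariski's main theorem, giving $X \cong \P^2 \times \P^2$.

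When $\psi$ is an inequidimensional $\P^1$-fibration, $\dim Z = 3$ and the Ionescu--Wi\'sniewski inequality combined with $\ell(R') \geq \iota_X = 2$ forces every special fiber $F_0$ of $\psi$ to be two-dimensional. Curves in $F_0$ lie in $R' \neq R$, so $\varphi|_{F_0} : F_0 \to \P^2$ is a finite surjective morphism; an argument parallel to Proposition~\ref{prop:bir:div:curve} (via \cite{HNov13}) identifies $F_0 \cong \P^2$ with $\varphi|_{F_0}$ an isomorphism, so that $F_0$ is a section of $\varphi$ coming from a rank-one quotient $\cE \twoheadrightarrow \cL$. Using $\iota_X = 2$ together with the uniformity of $\cE$ on lines, a line bundle computation pins down $\cE$, up to twist, as $\cO_{\P^2} \oplus T_{\P^2}(-1)$, and the incidence description of $\P(\cO_{\P^2} \oplus T_{\P^2}(-1))$ realizes $X$ as a divisor on $\P^2 \times \P^3$ of bidegree $(1, 1)$.

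The main obstacle is excluding the equidimensional $Q^2$-fibration case. Here a general fiber $F \cong \P^1 \times \P^1$ would admit a finite surjective morphism $\varphi|_F : F \to \P^2$; because both rulings of $F$ lie in the one-dimensional ray $R'$, they are numerically equivalent in $X$, so $\varphi|_F$ must have bidegree $(a, a)$ with $a \geq 1$ and degree $2a^2 \geq 2$. I would combine this constraint with the shape of $-K_X$ on $X = \P(\cE)$ and the splitting type of $\cE$ on lines of $\P^2$ to show that no such $\cE$ can produce a Fano fourfold with $\iota_X = 2$ admitting an equidimensional $Q^2$-fibration, thereby excluding this case and completing the proof.
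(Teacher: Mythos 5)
Your overall strategy --- identify $Y\cong\P^2$ and then pin down $X$ by a case analysis on the second extremal contraction $\psi$ --- is genuinely different from the paper's. The paper, after reducing to $Y\cong\P^2$ (excluding $\P^1\times\P^1$ exactly as you do, via the forbidden contraction onto $\P^1$), simply writes $X=\P(\cE)$ for a rank-three bundle $\cE$ on $\P^2$ and invokes the classification of Fano $\P^2$-bundles over $\P^2$ from \cite{SW1}, from which the two surviving cases are read off. Your identification of $Y$ and your exclusion of the $\P^1$-bundle case for $\psi$ via Proposition~\ref{prop:P1bundle} are both sound, and your treatment of the inequidimensional $\P^1$-fibration case is a plausible (if compressed) reconstruction of how one recovers $\cE\cong\cO_{\P^2}\oplus T_{\P^2}(-1)$.

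However, two steps carry genuine gaps. First, in the two-$\P^2$-bundle case, the claim that comparing $-K_X$ with $f^{\ast}(-K_{\P^2\times\P^2})$ ``forces the ramification divisor of $f$ to vanish'' is not justified. Setting $d_1=\varphi^{\ast}H_1\cdot C_\psi$ and $d_2=\psi^{\ast}H_2\cdot C_\varphi$ for lines $C_\psi,\ C_\varphi$ in fibers of $\psi,\ \varphi$, the relations $-K_X\cdot C_\varphi=-K_X\cdot C_\psi=3$ give $R=3(1-1/d_1)\varphi^{\ast}H_1+3(1-1/d_2)\psi^{\ast}H_2$, which vanishes exactly when $d_1=d_2=1$; but $d_i=1$ is precisely the assertion that a fiber of one bundle maps isomorphically onto the base of the other, which is what needs proof (finite self-maps of $\P^2$ of degree $d^2>1$ exist, so nothing formal excludes $d_i\geq 2$). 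Second, and more seriously, the equidimensional $Q^2$-fibration case --- which you yourself flag as the main obstacle --- is not actually excluded: you record the constraint $\deg(\varphi|_F)=2a^2\geq 2$ and then only announce an intention to combine it with the splitting type of $\cE$ on lines, without carrying out the computation. Since this is the one branch that must be killed rather than recognized, the argument is incomplete at its critical point. Both gaps evaporate if one quotes the classification of rank-three Fano bundles on $\P^2$, which is exactly the shortcut the paper takes.
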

 
\begin{proof} By \cite[Corollary~2.9]{KMM92} and \cite[Lemma~4.2, Lemma~4.4 and Proposition~6.1]{Dru16}, we see that $Y$ is isomorphic to $\P^2$ or $\P^1\times \P^1$. In the latter case, $X$ admits a contraction onto $\P^1$, which contradicts our assumption. Thus $Y$ is isomorphic to $\P^2$; our assertion follows from \cite{SW1}.
\end{proof} 
 
\if0
\begin{lemma}\label{lem:4-fold:P1} \rm Under the assumption of~\ref{ass:4-fold:tau:2}, if $X$ admits a contraction onto $\P^1$, then $X$ is isomorphic to the product of $\P^1$ and a Fano threefold $Z$ of index $2$ and $\rho_Z=1$ except for those of degree $1$.   
\end{lemma} 
 
\begin{proof} By Proposition~\ref{prop:charact:P1:product}, $X$ is isomorphic to the product of $\P^1$ and a threefold $Z$. According to Theorem~\ref{them:nef:2nd}, it is enough to prove that $\wedge^2 T_Z$ is nef and $T_Z$ is not nef. To prove this, let $p_1: X \to \P^1$ and $p_2: X\to Z$ be natural projections. Since we have $T_X \cong p_1^{\ast}T_{\P^1} \oplus p_2^{\ast}T_{Z}$, we obtain 
$$
\wedge^2T_X \cong \left[p_1^{\ast}T_{\P^1} \otimes  p_2^{\ast}T_{Z}\right]\oplus \left[\wedge^2p_2^{\ast}T_{Z}\right], 
$$
$$
\wedge^3T_X \cong \left[p_1^{\ast}T_{\P^1} \otimes \wedge^2 p_2^{\ast}T_{Z}\right]\oplus \left[\wedge^3 p_2^{\ast}T_{Z}\right].
$$
This implies that $Z$ is a Fano threefold with nef $\wedge^2 T_{Z}$ and non-nef $T_Z$.     
\end{proof}
\fi

\begin{proposition}\label{prop:4fold:P1fib} \rm Under the assumption of~\ref{ass:4-fold:tau:2}, assume that $X$ satisfies one of the following:
\begin{enumerate}
\item $X$ admits two $\P^{1}$-fibrations whose special fibers are two-dimensional;
\item $X$ admits a $\P^{1}$-fibration whose special fibers are two-dimensional and an equidimensional $Q^{2}$-fibration.
\end{enumerate}  
Then $\rho_X=i_X=2$.
\end{proposition}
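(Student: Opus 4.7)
The proof splits into establishing $\rho_X = 2$ and then deducing $i_X = 2$.

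\emph{Lengths of the rays.} In both cases (i), (ii) a general $\P^1$-fiber of $\varphi_1$ has $(-K_X)$-degree $2$, and in case (ii) a ruling line in a general fiber $Q^2$ of the equidimensional $Q^2$-fibration $\varphi_2$ also has $(-K_X)$-degree $2$. Combined with $\iota_X \geq 2$ from Lemma~\ref{lem:tau:iota:4}, this gives $\ell(R_1) = \ell(R_2) = 2$.

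\emph{Showing $\rho_X = 2$.} I argue by contradiction: suppose $\rho_X \geq 3$, so there is a third extremal ray $R_3$ with elementary fiber-type contraction $\varphi_3: X \to Y_3$; by Assumption~\ref{ass:4-fold:tau:2} and Theorem~\ref{them:1}, every fiber of $\varphi_3$ has codimension at least two. Each image $Y_i$ has $\rho_{Y_i} = \rho_X - 1 \geq 2$ and is a normal projective $\Q$-factorial variety of $\Q$-Fano type (being the image of the Fano $X$ under an elementary contraction). In case (ii), $Y_2$ is a $\Q$-Fano surface of Picard number at least $2$; any such surface is rational and admits a Mori fibration onto $\P^1$, and composition with $\varphi_2$ yields a morphism $X \to \P^1$, contradicting Assumption~\ref{ass:4-fold:tau:2}. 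In case (i), I would analyze the contraction $\varphi_{12}: X \to Y_{12}$ of the $2$-face spanned by $R_1, R_2$: the general fiber $F_{12}$ is rationally connected of Picard number $2$ carrying two $\P^1$-rulings, forcing $F_{12} \cong \P^1 \times \P^1$ and $Y_{12} \cong \P^2$; restriction of $\varphi_3$ to $F_{12}$ is then finite (since the face $R_1 + R_2$ captures every curve class lying in $F_{12}$), so $\varphi_3(F_{12}) \subset Y_3$ is $2$-dimensional, and chasing Mori fibrations of $Y_3$ ultimately produces the forbidden contraction $X \to \P^1$.

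\emph{Showing $i_X = 2$.} With $\rho_X = 2$ and $\ell(R_1) = \ell(R_2) = 2$, the Mori cone has exactly two rays. Writing $-K_X \equiv a L_1 + b L_2$ in $\Pic(X) \otimes \Q$ for primitive generators $L_i$ of the nef cone boundary walls dual to the extremal rays, with $a, b \in \Q_{>0}$, the equations $L_i \cdot C_i = 0$ and $(-K_X) \cdot C_i = 2$ combine with a direct computation of $L_j \cdot C_i$ from the explicit $\P^1$- and $Q^2$-fibration structures (a general fiber of $\varphi_i$ meets the pullback of a hyperplane from $Y_j$ in one point) to give $L_j \cdot C_i = 1$, hence $a = b = 2$. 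Therefore $-K_X = 2(L_1 + L_2) \in 2 \cdot \Pic(X)$, giving $i_X \geq 2$; combined with $i_X \mid \iota_X = 2$, we conclude $i_X = 2$.

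\emph{Main obstacle.} The hardest step is case (i) of the $\rho_X = 2$ argument: verifying that $Y_{12} \cong \P^2$ with general fiber $\P^1 \times \P^1$, and then iterating Mori fibrations on $Y_3$ (a $\Q$-Fano $3$-fold of Picard number $2$) to produce the forbidden $X \to \P^1$, requires careful control of intermediate birational steps while respecting the fibration structures on $X$.
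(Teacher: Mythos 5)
Your proposal has two genuine gaps, one in each half. For $\rho_X=2$: your case (ii) reduction (base of the $Q^2$-fibration is a smooth rational surface of Picard number $\geq 2$, hence maps onto $\P^1$ with connected fibers, contradicting Assumption~\ref{ass:4-fold:tau:2}) is essentially sound, though such a surface need not carry a \emph{Mori fibration} itself -- you must pass through a birational model $\F_n$ or a pencil of lines to produce the map to $\P^1$. But in case (i) the argument is not actually carried out: ``chasing Mori fibrations of $Y_3$ ultimately produces the forbidden contraction $X\to\P^1$'' is the whole difficulty, since $Y_3$ is a smooth rationally connected threefold that need not be Fano, and its MMP may involve birational steps that do not descend to morphisms from $Y_3$, let alone from $X$. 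The paper avoids this entirely: it blows up the finitely many $\P^2$-fibers of the $\P^1$-fibration to get a $\P^1$-bundle $\tilde\varphi:\tilde X\to\tilde Y$, pushes the fibers of $\tilde\varphi$ into $Z$ via $\psi\circ\pi$ to get a covering family of $1$-cycles on $Z$, and uses \cite[Corollary~1]{BCD07} to produce a fiber-type quotient $f:Z\to W$ that contracts the surfaces $\psi(F_i)$; relative dimension $2$ would force $W\cong\P^1$ and hence a contraction $X\to\P^1$, so $W$ is a point and $\rho_Z=1$, giving $\rho_X=2$ directly without any contradiction hypothesis on a third ray.

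For $i_X=2$ the gap is sharper: the equality $L_j\cdot C_i=1$ is not a ``direct computation from the fibration structures'' -- it is the crux of the proof. A general fiber $C_1$ of the $\P^1$-fibration maps under $\varphi_2$ to a curve in $Y_2$ whose degree against the ample generator could a priori be $\geq 2$ (e.g.\ a conic if $Y_2\cong\P^2$), in which case $a=1$ and $i_X=1$ would be consistent with $(-K_X)\cdot C_1=2$. The paper rules this out by producing, for a rational curve $\ell\subset Z$ of degree one against the ample generator, a ``minimal section'' $\tilde\ell\subset X$ mapping birationally onto $\ell$ (Claim~\ref{cl:section}) and then proving that $\tilde\ell$ is contracted by the \emph{other} fibration (Claim~\ref{cl:contracted}, which itself requires a locus-dimension and extremality argument). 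Only the combination $2=a\,(H_Y\cdot\varphi_*C_\psi)$ with $a=-K_X\cdot\tilde\ell'\geq 2$ pins down $a=2$ and $H_Y\cdot\varphi_*C_\psi=1$ simultaneously. Without an argument substituting for these two claims, your deduction of $i_X=2$ does not go through.
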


\begin{proof} In both cases, there exists a $\P^{1}$-fibration whose special fibers are two-dimensional, which is denoted by $\varphi: X\to Y$. Remark that the structure of $\varphi$ is described in \cite{Kachi97}. By \cite[Lemma~5.2]{Kachi97}, there exist only finitely many $2$-dimensional fibers of $\varphi$, which are denoted by $F_1=\varphi^{-1}(y_1), \ldots, F_m=\varphi^{-1}(y_m)$. By \cite[Theorem~0.6]{Kachi97} (see also \cite[Lemma~5.9.4]{AndWis98}), $Y$ is smooth and we have $F_i\cong \P^2$ and $N_{F_i/X}\cong \Omega_{\P^2}(1)$. Let $\pi: \tilde{X}\to X$ be the blow-up of $X$ along $F_1 \sqcup \ldots \sqcup F_m$ and $\tilde{Y}$ the blow-up of $Y$ along $\{y_1, \ldots, y_m\}$. Then the exceptional divisor $E$ of $\tilde{X}\to X$ is 
$$\P(N_{F_1/X}^{\vee}) \sqcup \ldots \sqcup \P(N_{F_m/X}^{\vee})\cong\P(T_{\P^2}(-1)) \sqcup \ldots \sqcup \P(T_{\P^2}(-1))
$$ By the universal property of blow-up (see for instance \cite[Proposition~7.14, Corollary~7.15]{Har}), there exists a morphism $\tilde{\varphi}: \tilde{X}\to \tilde{Y}$
\[
  \xymatrix{
   \tilde{X} \ar[r]^{\pi} \ar[d]_{\tilde{\varphi}} &  X\ar[d]^{\varphi}  \\
   \tilde{Y}  \ar[r]&  Y }
\]
making a commutative diagram as shown. Remark that $\tilde{\varphi}: \tilde{X}\to \tilde{Y}$ is a $\P^1$-bundle (cf. \cite[Remark~4.13]{AW93}). Each irreducible component $\P(N_{F_i/X})\cong \P(T_{\P^2}(-1))$ of the exceptional divisor $E$ is isomorphic to a hyperplane section of $\P^2\times \P^2$; the restrictions $\tilde{\varphi}|_{\P(N_{F_i/X}^{\vee})}$ and $\pi|_{\P(N_{F_i/X}^{\vee})}$ give two natural projections of $\P(T_{\P^2}(-1)) \subset \P^2\times \P^2$ onto $\P^2$.

By our assumption, we have another elementary contraction $\psi: X \to Z$ which is 
\begin{itemize}
\item a $\P^{1}$-fibration whose special fibers are two-dimensional; or
\item an equidimensional $Q^{2}$-fibration.
\end{itemize} 
In each case, $Z$ is a smooth variety of dimension at most three (see \cite[Theorem~0.6]{Kachi97} and \cite[Proposition~1.4.1]{AW92}). We proceed as the proof of \cite[Theorem~4.1]{casa08}.
Now we have the following diagram:
\[
  \xymatrix{
   \tilde{X} \ar[r]^{\psi\circ \pi} \ar[d]_{\tilde{\varphi}}  & Z \\
   \tilde{Y}  &   }
\]
This yields a morphism $\tilde{Y}\to {\rm Chow}(Z)$; we denote by $Y'$ the image of this morphism. Let us consider a geometric quotient of $Z$ for $Y'$; by applying \cite[Corollary~1]{BCD07}, there exists an elementary contraction $f: Z\to W$ which contracts a rational curve $\psi\circ \pi(\tilde{\varphi}^{-1}(\tilde{y}))$ for any closed point $\tilde{y} \in \tilde{Y}$. This yields that the surface $\psi(F_i)$ is contracted by $f$. Since $f$ is constructed as a geometric quotient for $Y'$, it is of fiber type; if the relative dimension of $f$ is one, then $f$ is a conic bundle, which contradicts the fact that $f$ contracts a surface. This yields that $f$ has relative dimension at least two. However, if the relative dimension of $f$ is two, then $W$ is isomorphic to $\P^1$; it turns out that $X$ admits a contraction $f \circ\psi: X \to \P^1$; this contradicts our assumption. Consequently, we see that $W$ is a point and $\rho_Z=1$. This implies that $\rho_X=2$. Since $Z$ is a uniruled variety of $\rho_Z=1$, it is a Fano variety. Since $\dim Z=2$ or $3$, $Z$ is $\P^2$ or a Fano threefold of $\rho_Z =1$. Then there exists a rational curve $\ell \subset Z$ such that $-K_Z\cdot \ell$ is equal to the Fano index of $Z$ (see, for instance, \cite{Sok79}). 
\begin{claim}\label{cl:section} There exists a rational curve $\tilde{\ell} \subset X$ such that $\psi|_{\tilde{\ell}}: \tilde{\ell} \to \ell$ is birational.
\end{claim}
If $\psi$ is a $\P^{1}$-fibration which is not equidimensional, then it is a smooth morphism away from finitely many fibers (see \cite[Lemma~5.2]{Kachi97}). This means that $\psi|_{\psi^{-1}({\ell})}: \psi^{-1}({\ell}) \to \ell$ is smooth over a nonempty open subset of $\ell$. Thus, the existence of a desired rational curve $\tilde{\ell} \subset X$ follows from the same argument as in \cite[Section~4.3]{Wat21}. So assume that $\psi: X\to Z$ is an equidimensional $Q^2$-fibration. In this case, $Z$ is isomorphic to $\P^2$; then, by replacing $\ell$ if necessary, we may assume that $\psi|_{\psi^{-1}({\ell})}: \psi^{-1}({\ell}) \to \ell$ is smooth over a nonempty open subset of $\ell$. Thus, we are done.

By this claim, we may take a family of minimal sections $\cM \subset {\rm RatCurves}^n(X)$ as in \cite[Section~4.3]{Wat21}, which is unsplit. Moreover, we claim the following:
\begin{claim}\label{cl:contracted} Any $\cM$-curve is contracted by $\varphi$. In particular, $\tilde{\ell}$ is contracted by $\varphi$.
\end{claim}
To prove this, assume the contrary; that is, there exists an $\cM$-curve which is not contracted by $\varphi$. Then, any $\cM$-curve is not contracted by $\varphi$. If $\cM$ is a covering family,  it follows from \cite[Corollary~1]{BCD07} that $\varphi: X\to Y$ is the geometric quotient for $\cM$. This contradicts our assumption. Thus $\codim_X{\rm Locus}(\cM)\geq 1$; then by Proposition~\ref{prop:Ion:Wis:2}, for any point $x \in {\rm Locus}(\cM)$, we have 
$$\dim {\rm Locus}(\cM_x) \geq \deg_{(-K_X)}\cM+\codim_X{\rm Locus}(\cM) -1.
$$ 
By our assumption, we have $\deg_{(-K_X)}\cM \geq 2$; combining this with the above inequality, we see that $\dim{\rm Locus}(\cM_x)\geq2$ for any point $x \in {\rm Locus}(\cM)$ and $\codim_X{\rm Locus}(\cM)=1$. Let us put $D:={\rm Locus}(\cM)$. Here we prove that there exists a curve $\ell_{\varphi} \subset D$ (resp. $\ell_{\psi}\subset D$) which is contracted by $\varphi$ (resp. $\psi$). If $\dim D>\dim \varphi(D)$ and $\dim D>\dim \psi(D)$, then we may find such curves. So assume $\dim D=\dim \varphi(D)=\dim \psi(D)$. Since $\varphi$ and $\psi$ have two-dimensional fibers and $D$ is a divisor on $X$, we may find desired curves by dimension count.

\if0
If $\dim{\rm Locus}(\cM_x)=3$ for some $x\in D$, ${\rm Locus}(\cM_x)$ coincides with $D$; however this implies that $D$ is contracted to a point by $\varphi$, since $\ell_{\varphi} \subset D$ and $\dim N_1(D)=1$ by \cite[II~Corollary~4.21]{Kb}; this is a contradiction. Thus we see that $\dim{\rm Locus}(\cM_x)=2$ for any $x\in D$. 
We claim that the curve $\ell_{\varphi}$ is not contained in ${\rm Locus}(\cM_x)$ for any $x\in D$. In fact, if $\ell_{\varphi}$ is contained in ${\rm Locus}(\cM_{x_0})$ for some $x_0\in D$, any $\cM$-curve is contracted by $\varphi$; this is a contradiction.
\fi

We claim that the curve $\ell_{\varphi}$ is not contained in ${\rm Locus}(\cM_x)$ for any $x\in D$. In fact, if $\ell_{\varphi}$ is contained in ${\rm Locus}(\cM_{x_0})$ for some $x_0\in D$, it follows from \cite[II~Corollary~4.21]{Kb} that any $\cM$-curve is contracted by $\varphi$; this is a contradiction. 
Thus we see that $\dim{\rm Locus}(\cM_x)=2$ for any $x\in D$ and we obtain
$$
D=\bigcup_{x\in \ell_{\varphi}} {\rm Locus}(\cM_x)
$$
By \cite[Lemma~3.2, Remark~3.3]{Occ06}, there exist rational numbers $\lambda\geq 0,  \mu$ and $[C]\in \cM$ such that $\ell_{\psi}=\lambda \ell_{\varphi} +\mu C\in N_1(X)$. For any ample divisor $A$ on $Z$, we have
$$
0=\psi^{\ast}(A)\cdot \ell_{\psi}=\lambda \psi^{\ast}(A)\cdot\ell_{\varphi} +\mu \psi^{\ast}(A)\cdot C.
$$
This implies that $\mu \leq 0$. Therefore we obtain 
$$\ell_{\psi} +(-\mu) C=\lambda \ell_{\varphi}\in \R_{\geq 0}[\ell_{\varphi}].
$$ 
Since $\R_{\geq 0}[\ell_{\varphi}]$ is an extremal ray, this yields $\ell_{\psi} \in \R_{\geq 0}[\ell_{\varphi}]$; this is a contradiction. As a consequence, Claim~\ref{cl:contracted} holds.

On the other hand, since $Y$ is a Fano threefold of $\rho_Y=1$, we may find a rational curve $\ell' \subset Y$ such that $-K_Y\cdot \ell'=i_Y$. By the same argument as in Claim~\ref{cl:section} and Claim~\ref{cl:contracted}, we may find a rational curve $\tilde{\ell'}\subset X$, which is contracted by $\psi$. 

Let $H_Y$ and $H_Z$ be the ample generators of $\rm Pic(Y)$ and $\rm Pic(Z)$, respectively. Since $\rho_X=2$,  
then $\{\varphi^{\ast}H_Y, \psi^{\ast}H_Z\}$ is a basis of $N^1(X)$. Thus there exist 
rational numbers $a, b$ such that $-K_X=a\varphi^{\ast}H_Y+b\psi^{\ast}H_Z$. Denoting by $C_{\varphi}$ and $C_{\psi}$ minimal extremal rational curves of the rays associated to $\varphi$ and $\psi$ respectively, we have
$$
2=-K_X\cdot C_{\psi}=aH_Y\cdot \varphi_{\ast}(C_{\psi}).
$$
On the other hand, we also have
$$
2\leq -K_X\cdot \tilde{\ell'}=aH_Y\cdot \varphi_{\ast}(\tilde{\ell'})=a.
$$
These imply that $H_Y\cdot \varphi_{\ast}(C_{\psi})=1$ and $a=2$. By the same way, we also obtain $H_X\cdot \psi_{\ast}(C_{\varphi})=1$ and $b=2$. As a consequence, $-K_X=2(\varphi^{\ast}H_Y+\psi^{\ast}H_Z)$; this yields that $i_X=2$. 
\end{proof}

\begin{proposition}\label{prop:4fold:Q2fib} \rm Under the assumption of~\ref{ass:4-fold:tau:2}, assume that $X$ admits two equidimensional $Q^{2}$-fibrations. Then $\rho_X=i_X=2$.
\end{proposition}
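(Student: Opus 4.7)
The plan is to mimic the argument of Proposition~\ref{prop:4fold:P1fib}, first establishing $\rho_X=2$ and then deducing $i_X=2$ by an intersection calculation. Let $\varphi:X\to Y$ and $\psi:X\to Z$ denote the two equidimensional $Q^2$-fibrations. By $\cite[\text{Proposition~1.4.1}]{AW92}$ both target surfaces $Y$ and $Z$ are smooth, and since they are images of the rationally connected Fano fourfold $X$ under surjective morphisms, they are smooth rational surfaces.

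To show $\rho_X=2$, I would first argue that $\rho_Y=1$. If $\rho_Y\geq 2$, then $Y$ is a smooth rational surface of Picard number at least $2$, so by the classification of such surfaces $Y$ admits a surjective morphism onto $\P^1$ (the ruling of a Hirzebruch surface, or the pencil of lines through a blown-up point on a blow-up of $\P^2$). The composition $X\to Y\to \P^1$ would then be a morphism onto $\P^1$, forcing $X\cong \P^1\times Z'$ by Proposition~\ref{prop:charact:P1:product} and contradicting Assumption~\ref{ass:4-fold:tau:2}. Hence $\rho_Y=1$, and symmetrically $\rho_Z=1$; since $\varphi$ is an elementary Mori contraction, $\rho_X=\rho_Y+1=2$, and in fact $Y\cong Z\cong \P^2$.

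For $i_X=2$, I will write $-K_X=a\varphi^*H_Y+b\psi^*H_Z$ with $a,b\in \Q_{>0}$, where $H_Y,H_Z$ denote the ample generators of $\Pic(Y)$ and $\Pic(Z)$. Picking minimal extremal rational curves $C_\varphi\in R_\varphi$ and $C_\psi\in R_\psi$, each has anticanonical degree equal to the length $\ell(R_\varphi)=\ell(R_\psi)=2$. Next I will construct a rational curve $\tilde{\ell'}\subset X$ mapping birationally under $\varphi$ onto a general line $\ell'\subset Y\cong \P^2$ --- by choosing $\ell'$ so that $\varphi$ is smooth over $\ell'$ and lifting to a section of one of the two rulings of the resulting smooth $Q^2$-bundle over $\ell'$, in the spirit of Claim~\ref{cl:section} --- and verify by the argument of Claim~\ref{cl:contracted} that $\tilde{\ell'}$ is contracted by $\psi$. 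Then $-K_X\cdot \tilde{\ell'}=aH_Y\cdot \ell'=a$, and Lemma~\ref{lem:tau:iota:4} gives $\iota_X=2$, so $a\geq 2$. Meanwhile $2=-K_X\cdot C_\psi=aH_Y\cdot \varphi_*(C_\psi)$ together with $H_Y\cdot \varphi_*(C_\psi)\in \Z_{\geq 1}$ yields $a\leq 2$, hence $a=2$; symmetrically $b=2$, so $-K_X=2(\varphi^*H_Y+\psi^*H_Z)$ and $i_X\geq 2$. Since $i_X$ divides $\iota_X=2$, we conclude $i_X=2$.

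The main obstacle will be the construction of $\tilde{\ell'}$ in the equidimensional $Q^2$-fibration setting and the verification that it is contracted by $\psi$: one must carefully choose the base line $\ell'$ to avoid the non-smooth locus of $\varphi$, pick a coherent ruling on the resulting $Q^2$-bundle over an open subset of $\ell'$, and then re-run the dimension and numerical-equivalence analysis of Claim~\ref{cl:contracted} in this setting, where in particular one uses the extremality of $R_\varphi$ together with $\rho_X=2$ to force the minimal section $\tilde{\ell'}$ into the $\varphi$-contracted ray.
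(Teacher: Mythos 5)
Your proof is correct. The paper's own proof of this proposition is literally ``the same argument as in Proposition~\ref{prop:4fold:P1fib}'', so for the step $\rho_X=2$ it runs the Chow-scheme machinery: one maps the base of one fibration into ${\rm Chow}$ of the other target, takes the geometric quotient via \cite[Corollary~1]{BCD07}, and rules out the cases where that quotient has positive dimension. You replace this with a genuinely more elementary argument tailored to the present case: since both $\varphi$ and $\psi$ are equidimensional $Q^2$-fibrations, both targets are \emph{smooth surfaces} (by \cite[Proposition~1.4.1]{AW92}, exactly as the paper itself uses in Proposition~\ref{prop:4fold:P1fib}), they are rational because $X$ is rationally connected, and any smooth rational surface of Picard number at least two admits a morphism onto $\P^1$ (run the surface MMP; the minimal model is $\P^2$ or $\F_n$, and in the former case the last blow-down factors through $\F_1$), which would give $X$ a contraction onto $\P^1$, excluded by Assumption~\ref{ass:4-fold:tau:2}. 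This buys a shorter, self-contained argument, but it is special to the situation where both targets are surfaces; the paper's quotient argument is the one that also handles the threefold targets occurring in Proposition~\ref{prop:4fold:P1fib}. The second half ($i_X=2$) is essentially identical to the paper's: minimal sections over lines in $Y\cong Z\cong\P^2$ à la Claims~\ref{cl:section} and \ref{cl:contracted}, then the intersection computation with $-K_X=a\varphi^{\ast}H_Y+b\psi^{\ast}H_Z$; your adaptation of Claim~\ref{cl:contracted} is the only point requiring care, and it goes through — indeed the dimension count producing curves in $D={\rm Locus}(\cM)$ contracted by each fibration is easier here, since \emph{every} fiber of both maps is two-dimensional. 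One small imprecision: Lemma~\ref{lem:tau:iota:4} only gives $\iota_X\geq 2$; the equality $\iota_X=2$ (needed to conclude $i_X=2$ from divisibility) comes from combining this with $-K_X\cdot C_\psi=\ell(R_\psi)=2$, or alternatively from observing that $\varphi^{\ast}H_Y+\psi^{\ast}H_Z$ is primitive because it has degree one on $C_\psi$.
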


\begin{proof} This can be proved by the same argument as in Proposition~\ref{prop:4fold:P1fib}.
\end{proof}

Summing up, we obtain Theorem~\ref{them:3'}.

\begin{corollary}[{Gachet's Theorem \cite[Theorem~1.3,~Theorem~1.4]{Gac22}}]\label{cor:str:nef}
Let $X$ be a smooth projective variety of dimension $n \geq 4$ and $\rho_X>1$. Then the following holds:
\begin{enumerate}
\item Assume that $\wedge^3T_X$ is strictly nef. Then $X\cong \P^2 \times \P^2$.  
\item Assume that $\wedge^4T_X$ is strictly nef and $n\geq 5$. Then $X$ is isomorphic to one of the following:
$$ \P(\cO_{\P^{3}}^{\oplus 2}\oplus\cO_{\P^{3}}(1));\quad \P^3\times \P^2;\quad Q^3\times \P^2;\quad  \P(T_{\P^3});\quad \P^3\times \P^3.$$
\end{enumerate}
\end{corollary}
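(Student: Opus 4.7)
The plan is to derive the Corollary from the paper's classification Theorem~\ref{them:3} by upgrading Lemma~\ref{lem:nonfree}: strict nefness of $\wedge^r T_X$ controls not just the free index $\tau_X$ but the full pseudoindex $\iota_X$.

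First I would reduce to the case where $X$ is Fano. Since strict nefness implies nefness, the decomposition theorem for smooth projective varieties with nef exterior power of the tangent bundle (recalled in the introduction, via \cite{Wat} and \cite{Gac22}) presents $X$, after a finite \'etale cover, as a smooth fibration over an abelian variety with Fano fibers. If that abelian variety had positive dimension, the relative tangent sequence would yield a quotient of $\wedge^r T_X$ that is trivial on some horizontal curve, contradicting strict nefness (which is preserved under \'etale pullback). Hence $X$ is Fano.

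Next is the key quantitative step: if $\wedge^r T_X$ is strictly nef with $1\leq r<n$, then $\iota_X\geq n-r+2$. Let $f\colon\P^1\to X$ be any nonconstant rational curve and write $f^*T_X=\bigoplus_{i=1}^n\cO(a_i)$ with $a_1\geq\cdots\geq a_n$. The differential $df\colon T_{\P^1}=\cO(2)\to f^*T_X$ exhibits $\cO(2)$ as a saturated line subbundle, which splits off on $\P^1$; hence $a_1\geq 2$. On the other hand,
\[
f^*\wedge^r T_X\ =\ \bigoplus_{|I|=r}\cO\!\Bigl(\textstyle\sum_{i\in I}a_i\Bigr),
\]
whose minimum line-bundle quotient on $\P^1$ has degree $a_{n-r+1}+\cdots+a_n$; strict nefness forces this sum to be at least $1$. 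Since the $a_i$ are non-increasing, $a_{n-r+1}\leq 0$ would make all of $a_{n-r+1},\ldots,a_n$ non-positive with non-positive sum, a contradiction. Therefore $a_{n-r+1}\geq 1$, and then
\[
-K_X\cdot f_*[\P^1]\ =\ \sum_{i=1}^n a_i\ \geq\ 2+(n-r-1)+1\ =\ n-r+2,
\]
where the middle term reflects $a_2,\ldots,a_{n-r}\geq a_{n-r+1}\geq 1$.

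With the pseudoindex bound in hand, the corollary follows directly. For (i), $r=3$ gives $\iota_X\geq n-1$; together with $\rho_X\geq 2$, Theorem~\ref{them:fano:collect}(iii) forces $\iota_X\leq \tfrac{n}{2}+1$, so $n=4$ and $\iota_X=3$, whereupon Theorem~\ref{them:fano:collect}(iv) yields $X\cong\P^2\times\P^2$. For (ii), $r=4$ with $n\geq 5$ gives $\iota_X\geq n-2$, and Theorem~\ref{them:3''} produces exactly the five listed varieties. The main work is in the pseudoindex bound $\iota_X\geq n-r+2$: one must combine the tangent-map estimate $a_1\geq 2$ with the minimum-summand estimate from strict nefness, then rule out $a_{n-r+1}\leq 0$ by a sign argument on the last $r$ coefficients. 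Once this is in place, the rest is a direct invocation of the classification already established in the paper.
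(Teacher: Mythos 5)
Your overall architecture is exactly the paper's: reduce to the Fano case, prove the pseudoindex bound $\iota_X\geq n-r+2$, then invoke Theorem~\ref{them:fano:collect} / Theorem~\ref{them:3''}. Your middle step is a correct reproof of the bound the paper simply cites as \cite[Lemma~2.1]{Gac22} (the estimate $a_1\geq 2$ from the saturation of $df$, positivity of the minimal summand $a_{n-r+1}+\cdots+a_n$ of $f^{\ast}\wedge^rT_X$, and the sign argument forcing $a_{n-r+1}\geq 1$ are all sound), and the final case analysis for $r=3$ and $r=4$ is correct.

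The genuine gap is in your reduction to the Fano case. You claim that if the abelian base $A$ of the decomposition has positive dimension $q$, then ``the relative tangent sequence yields a quotient of $\wedge^rT_X$ that is trivial on some horizontal curve.'' This is only true when $q\geq r$: in that range $\wedge^rT_{\tilde X}\twoheadrightarrow\wedge^r\varphi^{\ast}T_A$, which is trivial, and strict nefness dies. But when $0<q<r$, the exterior-power filtration of $\wedge^rT_{\tilde X}$ has as its genuine quotient only $\wedge^{r-q}T_{\tilde X/A}\otimes\det\varphi^{\ast}T_A\cong\wedge^{r-q}T_{\tilde X/A}$, which is \emph{not} trivial (the intermediate pieces $\wedge^iT_{\tilde X/A}\otimes\wedge^{r-i}\varphi^{\ast}T_A$ are only subquotients, so they give no quotient line bundles of $\wedge^rT_{\tilde X}$), and on a curve in a fiber this restricts to $\wedge^{r-q}T_F$ for a Fano fiber $F$ --- no contradiction is visible. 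So for $r=3,4$ the cases $q=1,2,3$ are not excluded by your argument as written. The paper closes this by a different route: \cite[Theorem~1.2]{LOY19} shows that strict nefness of $\wedge^rT_X$ already forces $X$ to be rationally connected (hence the abelian part is trivial), and then \cite[Proposition~1.4]{Wat} gives that $X$ is Fano. You should either cite that rational-connectedness result or supply an actual argument for $0<\dim A<r$; once the Fano reduction is secured, the rest of your proof stands.
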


\begin{proof} Let $X$ be a smooth projective variety of dimension $n$ and $\rho_X>1$. Assume that $\wedge^kT_X$ is strictly nef for $1\leq k<n$. Then \cite[Theorem~1.2]{LOY19} yields that $X$ is rationally connected. Applying \cite[Proposition~1.4]{Wat}, $X$ is a Fano variety. Then we see that $\iota_X\geq n+2-k$ (see for instance \cite[Lemma~2.1]{Gac22}). Thus our assertion follows from \cite[Theorem~1.4]{Wat24} and Theorem~\ref{them:3}.
\end{proof}

{\bf Acknowledgements.} The author would like to thank Taku Suzuki for carefully reading the first draft and, pointing out typos, and for helpful suggestions. The author would like to thank Professor Jaroslaw A. Wis\'niewski for answering the author's question on Fano fourfolds and sending his paper \cite{Wis90}. He would also like to thank Bruno Dewer for pointing out typos.
A part of this paper was a question posed during the author's talk at the conference ``Young Perspectives on Algebraic Geometry" held at the Chinese Academy of Sciences in Beijing from December 9th to 11th, 2023. The author would like to express his gratitude to all the organizers who hosted the conference, especially Professor Jie Liu and Professor Baohua Fu. Some results in this paper were obtained in Seoul, Korea, where the author stayed to participate in the conference ``Conference on Singularities and Birational Geometry" held from January 22nd to 26th, 2024. The author would like to express his gratitude to Professor Sung Rak Choi, the organizers, and Yonsei University for giving him the opportunity to visit Korea and for their hospitality during his stay.



\bibliographystyle{plain}
\bibliography{biblio}

\begin{thebibliography}{10}

\bibitem{AW92}
M.~Andreatta, E.~Ballico, and J.~Wi\'{s}niewski.
\newblock Vector bundles and adjunction.
\newblock {\em Internat. J. Math.}, 3(3):331--340, 1992.

\bibitem{AW93}
M.~Andreatta and J.~A. Wi\'{s}niewski.
\newblock A note on nonvanishing and applications.
\newblock {\em Duke Math. J.}, 72(3):739--755, 1993.

\bibitem{AO02}
Marco Andreatta and Gianluca Occhetta.
\newblock Special rays in the {M}ori cone of a projective variety.
\newblock {\em Nagoya Math. J.}, 168:127--137, 2002.

\bibitem{AndWis98}
Marco Andreatta and Jaros\l aw~A. Wi\'{s}niewski.
\newblock On contractions of smooth varieties.
\newblock {\em J. Algebraic Geom.}, 7(2):253--312, 1998.

\bibitem{AW01}
Marco Andreatta and Jaros\l aw~A. Wi\'{s}niewski.
\newblock On manifolds whose tangent bundle contains an ample subbundle.
\newblock {\em Invent. Math.}, 146(1):209--217, 2001.

\bibitem{AR14}
Carolina Araujo and Jos\'{e}~J. Ram\'{o}n-Mar\'{\i}.
\newblock Flat deformations of {$\Bbb{P}^n$}.
\newblock {\em Bull. Braz. Math. Soc. (N.S.)}, 45(3):371--383, 2014.

\bibitem{BCDD03}
Laurent Bonavero, Cinzia Casagrande, Olivier Debarre, and St\'{e}phane Druel.
\newblock Sur une conjecture de {M}ukai.
\newblock {\em Comment. Math. Helv.}, 78(3):601--626, 2003.

\bibitem{BCD07}
Laurent Bonavero, Cinzia Casagrande, and St\'{e}phane Druel.
\newblock On covering and quasi-unsplit families of curves.
\newblock {\em J. Eur. Math. Soc. (JEMS)}, 9(1):45--57, 2007.

\bibitem{Cam92}
Fr{\'e}d{\'e}ric Campana.
\newblock Connexit\'e rationnelle des vari\'et\'es de {F}ano.
\newblock {\em Ann. Sci. \'Ecole Norm. Sup. (4)}, 25(5):539--545, 1992.

\bibitem{CP92}
Fr\'{e}d\'{e}ric Campana and Thomas Peternell.
\newblock On the second exterior power of tangent bundles of threefolds.
\newblock {\em Compositio Math.}, 83(3):329--346, 1992.

\bibitem{Casa17}
C.~Casagrande.
\newblock Fano 4-folds, flips, and blow-ups of points.
\newblock {\em J. Algebra}, 483:362--414, 2017.

\bibitem{Casa22}
C.~Casagrande.
\newblock Fano 4-folds with a small contraction.
\newblock {\em Adv. Math.}, 405:Paper No. 108492, 55, 2022.

\bibitem{casa08}
Cinzia Casagrande.
\newblock Quasi-elementary contractions of {F}ano manifolds.
\newblock {\em Compos. Math.}, 144(6):1429--1460, 2008.

\bibitem{Casa13-2}
Cinzia Casagrande.
\newblock Numerical invariants of {F}ano 4-folds.
\newblock {\em Math. Nachr.}, 286(11-12):1107--1113, 2013.

\bibitem{Casa13}
Cinzia Casagrande.
\newblock On the birational geometry of {F}ano 4-folds.
\newblock {\em Math. Ann.}, 355(2):585--628, 2013.

\bibitem{Casa20}
Cinzia Casagrande.
\newblock Fano 4-folds with rational fibrations.
\newblock {\em Algebra Number Theory}, 14(3):787--813, 2020.

\bibitem{CasaRom22}
Cinzia Casagrande and Eleonora~A. Romano.
\newblock Classification of {F}ano 4-folds with {L}efschetz defect 3 and
  {P}icard number 5.
\newblock {\em J. Pure Appl. Algebra}, 226(3):Paper No. 106864, 13, 2022.

\bibitem{Gac22}
Gachet C\'ecile.
\newblock Positivity of higher exterior powers of the tangent bundle.
\newblock {\em to appear in International Mathematics Research Notices}, 2022.
\newblock Preprint arXiv:{\tt 2207.10854}.

\bibitem{CMSB}
Koji Cho, Yoichi Miyaoka, and Nicholas~I. Shepherd-Barron.
\newblock Characterizations of projective space and applications to complex
  symplectic manifolds.
\newblock In {\em Higher dimensional birational geometry ({K}yoto, 1997)},
  volume~35 of {\em Adv. Stud. Pure Math.}, pages 1--88. Math. Soc. Japan,
  Tokyo, 2002.

\bibitem{CS95}
Koji Cho and Ei-ichi Sato.
\newblock Smooth projective varieties with the ample vector bundle
  {$\bigwedge^2T_X$} in any characteristic.
\newblock {\em J. Math. Kyoto Univ.}, 35(1):1--33, 1995.

\bibitem{DebB}
Olivier Debarre.
\newblock {\em Higher-dimensional algebraic geometry}.
\newblock Universitext. Springer-Verlag, New York, 2001.

\bibitem{DH17}
Thomas Dedieu and Andreas H\"{o}ring.
\newblock Numerical characterisation of quadrics.
\newblock {\em Algebr. Geom.}, 4(1):120--135, 2017.

\bibitem{DPS94}
Jean-Pierre Demailly, Thomas Peternell, and Michael Schneider.
\newblock Compact complex manifolds with numerically effective tangent bundles.
\newblock {\em J. Algebraic Geom.}, 3(2):295--345, 1994.

\bibitem{Dru16}
St\'{e}phane Druel.
\newblock On {F}ano varieties whose effective divisors are numerically
  eventually free.
\newblock {\em Math. Res. Lett.}, 23(3):771--804, 2016.

\bibitem{Fuj12}
Kento Fujita.
\newblock Fano manifolds having (n-1,0)-type extremal rays with large picard
  number.
\newblock Preprint arXiv:{\tt 1212.4977}, 2012.

\bibitem{Fuj14}
Kento Fujita.
\newblock On a generalization of the {M}ukai conjecture for {F}ano fourfolds.
\newblock {\em Tokyo J. Math.}, 37(2):319--333, 2014.

\bibitem{Fuj1}
Takao Fujita.
\newblock On the structure of polarized manifolds with total deficiency one.
  {I}.
\newblock {\em J. Math. Soc. Japan}, 32(4):709--725, 1980.

\bibitem{Fuj2}
Takao Fujita.
\newblock On the structure of polarized manifolds with total deficiency one.
  {II}.
\newblock {\em J. Math. Soc. Japan}, 33(3):415--434, 1981.

\bibitem{GHS03}
Tom Graber, Joe Harris, and Jason Starr.
\newblock Families of rationally connected varieties.
\newblock {\em J. Amer. Math. Soc.}, 16(1):57--67, 2003.

\bibitem{Har}
Robin Hartshorne.
\newblock {\em Algebraic geometry}.
\newblock Springer-Verlag, New York-Heidelberg, 1977.
\newblock Graduate Texts in Mathematics, No. 52.

\bibitem{HLM22}
J\"{u}rgen Hausen, Antonio Laface, and Christian Mauz.
\newblock On smooth {F}ano fourfolds of {P}icard number two.
\newblock {\em Rev. Mat. Iberoam.}, 38(1):53--93, 2022.

\bibitem{HNov13}
Andreas H\"{o}ring and Carla Novelli.
\newblock Mori contractions of maximal length.
\newblock {\em Publ. Res. Inst. Math. Sci.}, 49(1):215--228, 2013.

\bibitem{HM98}
Jun-Muk Hwang and Ngaiming Mok.
\newblock Rigidity of irreducible {H}ermitian symmetric spaces of the compact
  type under {K}\"ahler deformation.
\newblock {\em Invent. Math.}, 131(2):393--418, 1998.

\bibitem{Ion86}
Paltin Ionescu.
\newblock Generalized adjunction and applications.
\newblock {\em Math. Proc. Cambridge Philos. Soc.}, 99(3):457--472, 1986.

\bibitem{Kachi97}
Yasuyuki Kachi.
\newblock Extremal contractions from {$4$}-dimensional manifolds to
  {$3$}-folds.
\newblock {\em Ann. Scuola Norm. Sup. Pisa Cl. Sci. (4)}, 24(1):63--131, 1997.

\bibitem{Kane18}
Akihiro Kanemitsu.
\newblock Mukai pairs and simple k-equivalence.
\newblock Preprint arXiv:{\tt 1812.05392}, 2018.

\bibitem{KW23}
Akihiro Kanemitsu and Kiwamu Watanabe.
\newblock Projective varieties with nef tangent bundle in positive
  characteristic.
\newblock {\em Compos. Math.}, 159(9):1974--1999, 2023.

\bibitem{Keb02}
Stefan Kebekus.
\newblock Characterizing the projective space after {C}ho, {M}iyaoka and
  {S}hepherd-{B}arron.
\newblock In {\em Complex geometry ({G}\"ottingen, 2000)}, pages 147--155.
  Springer, Berlin, 2002.

\bibitem{KO}
Shoshichi Kobayashi and Takushiro Ochiai.
\newblock Characterizations of complex projective spaces and hyperquadrics.
\newblock {\em J. Math. Kyoto Univ.}, 13:31--47, 1973.

\bibitem{Kb}
J{\'a}nos Koll{\'a}r.
\newblock {\em Rational curves on algebraic varieties}, volume~32 of {\em
  Ergebnisse der Mathematik und ihrer Grenzgebiete. 3. Folge. A Series of
  Modern Surveys in Mathematics [Results in Mathematics and Related Areas. 3rd
  Series. A Series of Modern Surveys in Mathematics]}.
\newblock Springer-Verlag, Berlin, 1996.

\bibitem{KMM92}
J{\'a}nos Koll{\'a}r, Yoichi Miyaoka, and Shigefumi Mori.
\newblock Rational connectedness and boundedness of {F}ano manifolds.
\newblock {\em J. Differential Geom.}, 36(3):765--779, 1992.

\bibitem{KM}
J{\'a}nos Koll{\'a}r and Shigefumi Mori.
\newblock {\em Birational geometry of algebraic varieties}, volume 134 of {\em
  Cambridge Tracts in Mathematics}.
\newblock Cambridge University Press, Cambridge, 1998.
\newblock With the collaboration of C. H. Clemens and A. Corti, Translated from
  the 1998 Japanese original.

\bibitem{Lan98}
Adrian Langer.
\newblock Fano {$4$}-folds with scroll structure.
\newblock {\em Nagoya Math. J.}, 150:135--176, 1998.

\bibitem{LN05}
F.~Laytimi and W.~Nahm.
\newblock A vanishing theorem.
\newblock {\em Nagoya Math. J.}, 180:35--43, 2005.

\bibitem{L1}
Robert Lazarsfeld.
\newblock {\em Positivity in algebraic geometry. {I}}, volume~48 of {\em
  Ergebnisse der Mathematik und ihrer Grenzgebiete. 3. Folge. A Series of
  Modern Surveys in Mathematics [Results in Mathematics and Related Areas. 3rd
  Series. A Series of Modern Surveys in Mathematics]}.
\newblock Springer-Verlag, Berlin, 2004.
\newblock Classical setting: line bundles and linear series.

\bibitem{L2}
Robert Lazarsfeld.
\newblock {\em Positivity in algebraic geometry. {II}}, volume~49 of {\em
  Ergebnisse der Mathematik und ihrer Grenzgebiete. 3. Folge. A Series of
  Modern Surveys in Mathematics [Results in Mathematics and Related Areas. 3rd
  Series. A Series of Modern Surveys in Mathematics]}.
\newblock Springer-Verlag, Berlin, 2004.
\newblock Positivity for vector bundles, and multiplier ideals.

\bibitem{LOY19}
Duo Li, Wenhao Ou, and Xiaokui Yang.
\newblock On projective varieties with strictly nef tangent bundles.
\newblock {\em J. Math. Pures Appl. (9)}, 128:140--151, 2019.

\bibitem{Matsu95}
Kenji Matsuki.
\newblock Weyl groups and birational transformations among minimal models.
\newblock {\em Mem. Amer. Math. Soc.}, 116(557):vi+133, 1995.

\bibitem{Mi}
Yoichi Miyaoka.
\newblock Numerical characterisations of hyperquadrics.
\newblock In {\em Complex analysis in several variables---{M}emorial
  {C}onference of {K}iyoshi {O}ka's {C}entennial {B}irthday}, volume~42 of {\em
  Adv. Stud. Pure Math.}, pages 209--235. Math. Soc. Japan, Tokyo, 2004.

\bibitem{Mori79}
Shigefumi Mori.
\newblock Projective manifolds with ample tangent bundles.
\newblock {\em Ann. of Math. (2)}, 110(3):593--606, 1979.

\bibitem{Muk89}
Shigeru Mukai.
\newblock Biregular classification of {F}ano {$3$}-folds and {F}ano manifolds
  of coindex {$3$}.
\newblock {\em Proc. Nat. Acad. Sci. U.S.A.}, 86(9):3000--3002, 1989.

\bibitem{NO07}
Carla Novelli and Gianluca Occhetta.
\newblock Ruled {F}ano fivefolds of index two.
\newblock {\em Indiana Univ. Math. J.}, 56(1):207--241, 2007.

\bibitem{Occ06}
Gianluca Occhetta.
\newblock A characterization of products of projective spaces.
\newblock {\em Canad. Math. Bull.}, 49(2):270--280, 2006.

\bibitem{OW02}
Gianluca Occhetta and Jaros{\l}aw~A. Wi{\'s}niewski.
\newblock On {E}uler-{J}aczewski sequence and {R}emmert-van de {V}en problem
  for toric varieties.
\newblock {\em Math. Z.}, 241(1):35--44, 2002.

\bibitem{OSS}
Christian Okonek, Michael Schneider, and Heinz Spindler.
\newblock {\em Vector bundles on complex projective spaces}.
\newblock Progress in Mathematics, 3. Birkh\"auser, Boston, Mass., 1980.

\bibitem{Ott88}
Giorgio Ottaviani.
\newblock Spinor bundles on quadrics.
\newblock {\em Trans. Amer. Math. Soc.}, 307(1):301--316, 1988.

\bibitem{Prok94}
Yu.~G. Prokhorov.
\newblock The existence of a smooth divisor on {F}ano fourfolds of index {$2$}.
\newblock {\em Mat. Sb.}, 185(9):139--152, 1994.

\bibitem{Sato76}
Ei-ichi Sato.
\newblock Uniform vector bundles on a projective space.
\newblock {\em J. Math. Soc. Japan}, 28(1):123--132, 1976.

\bibitem{Sch18}
David Schmitz.
\newblock On exterior powers of tangent bundle on toric varieties.
\newblock Preprint arXiv:{\tt 1811.02603}, 2018.

\bibitem{Isk}
I.~R. Shafarevich, editor.
\newblock {\em Algebraic geometry. {V}}, volume~47 of {\em Encyclopaedia of
  Mathematical Sciences}.
\newblock Springer-Verlag, Berlin, 1999.
\newblock Fano varieties, A translation of {{\i}t Algebraic geometry. 5}
  (Russian), Ross. Akad. Nauk, Vseross. Inst. Nauchn. i Tekhn. Inform., Moscow,
  Translation edited by A. N. Parshin and I. R. Shafarevich.

\bibitem{Siu89}
Yum~Tong Siu.
\newblock Nondeformability of the complex projective space.
\newblock {\em J. Reine Angew. Math.}, 399:208--219, 1989.

\bibitem{SW04}
Luis~E. Sol{\'a}~Conde and Jaros{\l}aw~A. Wi{\'s}niewski.
\newblock On manifolds whose tangent bundle is big and 1-ample.
\newblock {\em Proc. London Math. Soc. (3)}, 89(2):273--290, 2004.

\bibitem{SSW91}
Ignacio Sols, Micha\l Szurek, and Jaros\l aw~A. Wi\'{s}niewski.
\newblock Rank-{$2$} {F}ano bundles over a smooth quadric {$Q_3$}.
\newblock {\em Pacific J. Math.}, 148(1):153--159, 1991.

\bibitem{SSW}
Ignacio Sols, Micha{\l} Szurek, and Jaros{\l}aw~A. Wi{\'s}niewski.
\newblock Rank-{$2$} {F}ano bundles over a smooth quadric {${\mathbb Q}_3$}.
\newblock {\em Pacific J. Math.}, 148(1):153--159, 1991.

\bibitem{SW90}
Micha{\l} Szurek and Jaros{\l}aw~A. Wi{\'s}niewski.
\newblock Fano bundles over {${\mathbb P}^3$} and {${\mathbb Q}_3$}.
\newblock {\em Pacific J. Math.}, 141(1):197--208, 1990.

\bibitem{SW1}
Micha{\l} Szurek and Jaros{\l}aw~A. Wi{\'s}niewski.
\newblock On {F}ano manifolds, which are {${\mathbb P}^k$}-bundles over
  {${\mathbb P}^2$}.
\newblock {\em Nagoya Math. J.}, 120:89--101, 1990.

\bibitem{Tsu10-2}
Toru Tsukioka.
\newblock A remark on {F}ano 4-folds having {$(3,1)$}-type extremal
  contractions.
\newblock {\em Math. Ann.}, 348(3):737--747, 2010.

\bibitem{Tsu12}
Toru Tsukioka.
\newblock On the minimal length of extremal rays for {F}ano four-folds.
\newblock {\em Math. Z.}, 271(1-2):555--564, 2012.

\bibitem{VdV72}
A.~Van~de Ven.
\newblock On uniform vector bundles.
\newblock {\em Math. Ann.}, 195:245--248, 1972.

\bibitem{Sok79}
V.~V. \v{S}okurov.
\newblock The existence of a line on {F}ano varieties.
\newblock {\em Izv. Akad. Nauk SSSR Ser. Mat.}, (no. 4,):922--964, 968, 1979.

\bibitem{Wat21}
Kiwamu Watanabe.
\newblock Positivity of the second exterior power of the tangent bundles.
\newblock {\em Adv. Math.}, 385:Paper No. 107757, 27, 2021.

\bibitem{Wat}
Kiwamu Watanabe.
\newblock Positivity of the exterior power of the tangent bundles.
\newblock {\em Proc. Japan Acad. Ser. A Math. Sci.}, 99(10):77--80, 2023.

\bibitem{Wat24}
Kiwamu Watanabe.
\newblock Fano varieties of middle pseudoindex.
\newblock Preprint arXiv:{\tt 2403.14065}, 2024.

\bibitem{Wis90}
Jaros\l~aw Wi\'{s}niewski.
\newblock Fano {$4$}-folds of index {$2$} with {$b_2\geq 2$}. {A} contribution
  to {M}ukai classification.
\newblock {\em Bull. Polish Acad. Sci. Math.}, 38(1-12):173--184, 1990.

\bibitem{Wis89}
Jaros\l aw~A. Wi\'{s}niewski.
\newblock Ruled {F}ano {$4$}-folds of index {$2$}.
\newblock {\em Proc. Amer. Math. Soc.}, 105(1):55--61, 1989.

\bibitem{Wis90b}
Jaros\l aw~A. Wi\'{s}niewski.
\newblock On a conjecture of {M}ukai.
\newblock {\em Manuscripta Math.}, 68(2):135--141, 1990.

\bibitem{Wis91}
Jaros{\l}aw~A. Wi{\'s}niewski.
\newblock On contractions of extremal rays of {F}ano manifolds.
\newblock {\em J. Reine Angew. Math.}, 417:141--157, 1991.

\bibitem{Yas12}
Kazunori Yasutake.
\newblock On the second exterior power of tangent bundles of fano fourfolds
  with picard number $\rho_x\geq 2$.
\newblock Preprint arXiv:{\tt 1212.0685}, 2012.

\bibitem{Yas14}
Kazunori Yasutake.
\newblock On the second and third exterior power of tangent bundles of fano
  manifolds with birational contractions.
\newblock Preprint arXiv:{\tt 1403.5304 }, 2014.

\end{thebibliography}
\end{document}